\newcommand*{\N}{\mathbb{N}}
\newcommand*{\Z}{\mathbb{Z}}
\newcommand*{\R}{\mathbb{R}}
\newcommand*{\e}{\mathrm{e}}
\newcommand*{\E}{\mathbb{E}}
\newcommand*{\Prob}{\mathbb{P}}
\newcommand*{\maxdis}{\mathsf{Max}}
\newcommand*{\converges}{\xrightarrow[]{n\rightarrow\infty}}
\newcommand*{\CID}{\overset{\mathcal{D}}{\longrightarrow}}
\newcommand*{\CIP}{\overset{\mathbb{P}}{\longrightarrow}}
\newcommand*{\ProbHat}{\widehat{\mathbb{P}}}
\newcommand*{\ExpHat}{\widehat{\mathbb{E}}}
\newcommand*{\cupdot}{\mathbin{\mathaccent\cdot\cup}}
\newtheorem{theorem}{Theorem}
\newtheorem{lemma}{Lemma}
\newtheorem{corollary}{Corollary}
\newtheorem{prop}{Proposition}
\definecolor{caribbeangreen}{rgb}{0.0, 0.8, 0.6}
\title{Limit theorems for the empirical distribution of supercritical branching random walks on transitive graphs}
\author{Robin Kaiser, Martin Klötzer, Ecaterina Sava-Huss}
\date{\today}
\begin{document}
\maketitle

\begin{abstract}
We consider supercritical branching random walks on transitive graphs and we prove a law of large numbers for the mean displacement of the ensemble of particles, and a Stam-type central limit theorem for the empirical distributions, thus answering the questions from Kaimanovich-Woess \cite[Section 6.2]{WoessKaim}. 
\end{abstract}

\textit{2020 Mathematics Subject Classification.} 60J80, 60F05, 60F15.

\textit{Keywords}: branching process, random walk, law of large numbers, central limit theorem, empirical distribution, many-to-few principle, maximal displacement.

\section{Introduction}

Limit theorems for random walks - such as law of large numbers and central limit theorems for the rate of escape, the range, and other related quantities - are central questions about random walks and have seen widespread research interest and many contributions in the past, see \cite{SubET,SawSteg}. It is also natural  to ask in what way such results hold for the ensemble of particles in a branching process. Harris \cite{HarrisTOBP} conjectured that a central limit type theorem for the scaled empirical distribution of a supercritical branching process on $\mathbb{Z}$ should hold, and this has been proven in Stam \cite{ConjHarris}. Since then, the results of Stam have been generalized and improved \cite{BigginsCLT,GaoCLT1,GaoCLT2}. See also \cite{LecNotesBRW} and the references therein for an overview on branching random walks.

Besides central limit theorems, supercritical branching process have been studied from many different perspectives, and additive martingales, multiplicative martingales and derivative martingales \cite{AdditiveMartingale,DerivativeMartingale1,DerivativeMartingale2} are core tools in understanding such processes.
For instance, employing such martingales, asymptotics for the maximal drift of supercritical branching processes  can be analyzed \cite{FirstBirthProbBigg,AidekonLeftmost}. For the three aforementioned martingales, Kesten-Stigum convergence type results have been proven in \cite{BigginsMartingale1, BigginsMartingale2}.

In \cite{WoessKaim, Hutch}, supercritical branching random walks on graphs are viewed from the viewpoint of their empirical distributions, and the authors prove the convergence of these distributions to random limit boundary measures which are measures on the boundary of the underlying graph. Recently, in \cite{elisabetta} 
the authors investigate the boundary behaviour of supercritical branching random walks, connecting the typical asymptotic paths of particles to the Martin boundary of the random walk on the underlying graph.
In the current work, motivated by questions raised in \cite[Section 6.2]{WoessKaim}, we analyse the empirical distribution of a supercritical branching process from a quantitative perspective, and we prove a law of large numbers and a Stam type central limit theorem for branching processes on transitive graphs.

Let $G$ be an infinite, connected, transitive graph, and we choose a fixed vertex $o\in G$ to be the origin of $G$. Let $d$ be the graph metric in $G$, and we denote by $|x|=d(o,x)$ the distance of $x$ to the origin. Furthermore, let $\textsf{P}=(\textsf{p}(x,y))_{x,y\in G}$ be the transition kernel of a random walk on $G$. We call the random walk $(Y_n)_{n\in\N}$ with transition probabilities given by $\textsf{P}$ the underlying random walk or the underlying Markov chain. In addition, let $\pi$ be a distribution supported on the natural numbers $\mathbb{N}$, which will serve as the offspring distribution of a branching process. A \textit{branching random walk} $(X_v)_{v\in T}$ over $G$ is a random walk indexed over the Galton-Watson tree $T$ with offspring distribution $\pi$, where the particles move according to the transition kernel $\mathsf{P}$; see Section \ref{sec:prelim} for details.

We write $m_n(x)$ for the random number of particles at location $x\in G$ in generation $n$ and $Z_n$ for the total random number of particles in generation $n$, i.e.~$Z_n = \sum_{x\in G}m_n(x)$. We then define the empirical distribution of the branching random walk as
\begin{equation}\label{eq:emp-distr}
M_n:=\frac{1}{\rho^n}\sum_{x\in G}m_n(x)\delta_x,
\end{equation}
where $\rho$ is the expected offspring number, i.e.~$\mathbb{E}[\pi]=\rho$. Note that unlike in our paper, in \cite{WoessKaim} the empirical distribution is defined as the probability measure 
$Z_n^{-1}\sum_{x\in G} m_n(x)\delta_x$, so our measure differs from the one in \cite{WoessKaim} by the well understood population martingale $Z_n/\rho^n$.
We still call $\rho^{-n}\sum_{x\in G} m_n(x)\delta_x$ the empirical distribution, and this is the main object of interest in the current paper.

We introduce several assumptions on the branching random walk and on the underlying random walk, under which our results hold.
\begin{enumerate}[label=(A\arabic*)]
\setlength\itemsep{-0.35cm}
\item The offspring distribution $\pi$ has finite second moment $\theta$, i.e.
$$\theta=\sum_{k\in\N} k^2 \pi(k) <\infty.$$\label{A1}
\item The branching process is supercritical, i.e.
$$\rho=\sum_{k\in\N} k\pi(k) > 1.$$\label{A2}
\item The underlying random walk with transition matrix $\mathsf{P}$ has some finite exponential moment, i.e.~there exists a $t>0$ such that
$$\sum_{x\in G}\e^{t|x|}\textsf{p}(o,x) < \infty.$$\label{A3}
\item The drift of the underlying random walk is independent of the initial state, i.e.~for random walks $(Y_n^x)_{n\in\N}$ and $(Y_n^y)_{n\in\N}$ started in $x$ and $y$ respectively, it holds that
\begin{equation}\label{eq: indep of drift}
(d(x,Y^x_n))_{n\in\N}\overset{\mathcal{D}}{=}(d(y,Y^y_n))_{n\in\N},
\end{equation}\label{A4}
\item A particle always produces a positive number of offspring, i.e.~$\pi(0)=0$.\label{A5} 
\end{enumerate}
Above ``$\overset{\mathcal{D}}{=}$'' denotes equality in distribution.
We write $W_n$ for the population martingale of the branching process, i.e.~$W_n:=Z_n/ \rho^n$.
Note that condition \ref{A1} implies that 
$$\sum_{k\in\N}k\log(k)\pi(k) <\infty,$$
which is a necessary and sufficient condition for the population martingale $W_n$ to converge almost surely to a limit $W$ with $\Prob(W>0) >0$, in view of Kesten and Stignum \cite{KestenStigum}.
Also note that assumptions \ref{A3} and \ref{A4} together with Kingman's subadditive ergodic theorem \cite{SubET} imply that the underlying random walk $(Y_n)_{n\in\N}$ has a rate of escape, i.e.~the almost sure limit 
$$\ell = \lim_{n\to\infty}\frac{|Y_n|}{n} = \lim_{n\to\infty}\frac{\E\big[|Y_n|\big]}{n}$$ 
exists and is finite.
Our first main result deals with the strong law of large numbers for the empirical distribution $M_n$.
\begin{theorem}\label{thm:first_main}
Under assumptions \ref{A1}-\ref{A5} and $M_n$ defined as in \eqref{eq:emp-distr}, it holds
$$\lim_{n\to\infty}\frac{1}{n}\int |x|\,dM_n(x) = \ell\cdot W\quad \text{almost surely}.$$
\end{theorem}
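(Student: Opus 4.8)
The plan is to rewrite the left-hand side as
\[
A_n:=\frac1n\int|x|\,dM_n(x)=\frac1{n\rho^n}\sum_{|v|=n}|X_v|
\]
(the sum running over the particles $v$ of generation $n$) and to prove $A_n\to\ell W$ almost surely by showing that, after the normalisation by $\rho^{-n}$, the generation-$n$ particles lying far from distance $\ell n$ contribute negligibly, so that $A_n$ is squeezed between multiples of the population martingale $W_n\to W$. First I would record, via the many-to-one lemma, that $\E[A_n]=\E[|Y_n|]/n\to\ell$, which (together with $\E[W]=1$) fixes the candidate limit. Then I would use that $(|Y_n|)_n$ is subadditive with independent increments — $|Y_{n+m}|\le|Y_n|+d(Y_n,Y_{n+m})$, where by \ref{A4} and the Markov property $d(Y_n,Y_{n+m})$ is independent of the walk up to time $n$ and has the law of $|Y_m|$ — so that the exponential moment \ref{A3}, fed into a Chernoff estimate for a sum of order $n/m$ i.i.d.\ copies of $|Y_m|$ (with $m$ chosen large so that $\E[|Y_m|]/m$ is close to $\ell$), yields the exponential upper-deviation bound $\Prob(|Y_n|\ge an)\le e^{-c(a)n}$ for every $a>\ell$; moreover \ref{A3} makes $\{|Y_n|/n\}_n$ uniformly integrable.

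Fix $\varepsilon>0$. The over-running particles are negligible by a first-moment argument. Setting $G_n(\varepsilon):=\rho^{-n}\#\{|v|=n:|X_v|\ge(\ell+\varepsilon)n\}$ and $G_n^{\mathrm d}(\varepsilon):=\tfrac1{n\rho^n}\sum_{|v|=n}|X_v|\,\mathbbm{1}\{|X_v|\ge(\ell+\varepsilon)n\}$, the many-to-one lemma identifies $\E[G_n(\varepsilon)]$ and $\E[G_n^{\mathrm d}(\varepsilon)]$ with $\Prob(|Y_n|\ge(\ell+\varepsilon)n)$ and $\tfrac1n\E[\,|Y_n|\,\mathbbm{1}\{|Y_n|\ge(\ell+\varepsilon)n\}]$, both summable in $n$ by the exponential deviation bound; Markov's inequality and Borel--Cantelli then give $G_n(\varepsilon)\to0$ and $G_n^{\mathrm d}(\varepsilon)\to0$ almost surely.

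The lagging particles carry the real difficulty, and this is the step I expect to be the main obstacle: for $H_n(\varepsilon):=\rho^{-n}\#\{|v|=n:|X_v|\le(\ell-\varepsilon)n\}$ one has $\E[H_n(\varepsilon)]=\Prob(|Y_n|\le(\ell-\varepsilon)n)\to0$ but \emph{without a rate}, since the underlying walk carries no a priori large-deviation bound for lower deviations of $|Y_n|$ below $\ell$, so Borel--Cantelli cannot be applied along all of $\N$ directly. To get around this I would pass to the second moment via the many-to-two (many-to-few) principle: splitting an ordered pair of generation-$n$ particles at their most recent common ancestor and using conditional independence of the two residual excursions given the ancestor's position, one obtains $\E[H_n(\varepsilon)^2]\le\rho^{-n}+C\,\Prob(|Y_n|\le(\ell-\varepsilon)n)$. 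Choosing diagonally an increasing sequence $(n_k)$ with $\sum_k\Prob(|Y_{n_k}|\le(\ell-1/j)n_k)<\infty$ for every $j$ gives, by Chebyshev and Borel--Cantelli, $H_{n_k}(\varepsilon)\to0$ almost surely for all $\varepsilon>0$ along this single subsequence. Upgrading to all of $\N$ requires an interpolation: a generation-$n$ particle $v$ with $n_k\le n\le n_{k+1}$ has its generation-$n_k$ ancestor $u$ within $\sum_{j=n_k+1}^{n}D_{v_j}$ of its own position ($D_{v_j}$ denoting the step lengths along $v$'s lineage), and controlling both $\rho^{-n}\sum_{|u|=n_k}|X_u|N_n(u)$ and $\rho^{-n}\sum_{|v|=n}\sum_{j>n_k}D_{v_j}$ uniformly in $n$ — by Doob's maximal inequality for the subtree population martingales $m\mapsto N_{n_k+m}(u)/\rho^{m}$ and the exponential control on step lengths — yields $\sup_{n_k\le n\le n_{k+1}}\bigl|A_n-\tfrac{n_k}{n}A_{n_k}\bigr|=O\bigl(\tfrac{n_{k+1}}{n_k}-1\bigr)+o(1)$ almost surely, so that taking $(n_k)$ with bounded ratios propagates $H_n(\varepsilon)\to0$ to \emph{all} $n$.

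With $H_n(\varepsilon)\to0$, $G_n(\varepsilon)\to0$, $G_n^{\mathrm d}(\varepsilon)\to0$ almost surely in hand, the proof is finished by sandwiching. Split $n\rho^nA_n$ according to whether $|X_v|\le(\ell-\varepsilon)n$, $(\ell-\varepsilon)n<|X_v|<(\ell+\varepsilon)n$, or $|X_v|\ge(\ell+\varepsilon)n$: the first block over $n\rho^n$ is at most $(\ell-\varepsilon)H_n(\varepsilon)\to0$, the third equals $G_n^{\mathrm d}(\varepsilon)\to0$, and the middle block over $n\rho^n$ lies between $(\ell-\varepsilon)$ and $(\ell+\varepsilon)$ times $\rho^{-n}\#\{(\ell-\varepsilon)n<|X_v|<(\ell+\varepsilon)n\}=W_n-G_n(\varepsilon)-H_n(\varepsilon)\to W$; hence almost surely $(\ell-\varepsilon)W\le\liminf_nA_n\le\limsup_nA_n\le(\ell+\varepsilon)W$, and $\varepsilon\downarrow0$ gives $A_n\to\ell W$. (Alternatively, $\limsup_nA_n=\ell W$ can be shown directly: it is a tail functional obeying $\limsup_nA_n=\rho^{-1}\sum_i\limsup_nA_n^{(i)}$ almost surely — by comparing expectations over the first-generation subtrees — and, by a law of large numbers along the tree together with the domination $A_n\le(\sup_kW_k)\cdot\sup_m\bigl(\max_{|v|=m}|X_v|/m\bigr)\in L^1$, it must be of the form $\ell W$; the matching lower bound on $\liminf_nA_n$ still rests on the lagging-particle estimate.)
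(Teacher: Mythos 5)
There is a genuine gap, and it sits exactly where you predicted: the lagging particles. Your upper-deviation and sandwich steps are fine, and your second-moment bound $\E[H_n(\varepsilon)^2]\leq \rho^{-n}+C\,\Prob\big(|Y_n|\leq(\ell-\varepsilon)n\big)$ is correct (one can even compute $\ProbHat(\tau\geq n)(\theta/\rho^2)^n=\rho^{-n}$). But the proposed rescue — choose a subsequence $(n_k)$ with $\sum_k\Prob(|Y_{n_k}|\leq(\ell-1/j)n_k)<\infty$ and then interpolate — cannot work under the stated hypotheses. The assumptions \ref{A1}--\ref{A5} give no rate whatsoever for the lower deviations $p_n:=\Prob(|Y_n|\leq(\ell-\varepsilon)n)$ (the walk is a general distance-stationary Markov chain, not an i.i.d. increment walk on a group, so no Azuma/McDiarmid-type concentration is available), and summability of $p_{n_k}$ may force $n_k$ to grow faster than exponentially, whereas your interpolation error is $O(n_{k+1}/n_k-1)+o(1)$ and hence needs $n_{k+1}/n_k\to 1$, i.e. at most exponential growth of $n_k$. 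If, say, $p_n\asymp 1/\log\log n$, no subsequence satisfies both requirements; and even with merely bounded ratios you would only conclude $\limsup_n|A_n-\ell W|=O(C-1)$, not convergence. A direct Chebyshev along all $n$ fails for the same reason: because pairs of particles share ancestry, $\operatorname{Var}(H_n)$ is of order $(\E[W^2]-1)p_n^2$ plus exponentially small terms, which is not summable without a rate on $p_n$. So as written the almost-sure statement is not reached.

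The missing device is the one the paper uses: convert a \emph{rate-free} mean-square statement into a summable one by exploiting the exponentially many independent subtrees. The paper first proves $\E[(L_n-\ell W_n)^2]\to 0$ (Proposition \ref{prop:weak-lln}, via the many-to-two formula of Lemma \ref{lem:many-to-two} and the splitting-time estimate), with no rate needed. Then, by the decomposition at generation $\sqrt{n}$ (Lemma \ref{lem:decomp}), $L_n$ is, up to an error controlled by the linear bound on the maximal displacement (Proposition \ref{prop:maxdis}), an average of $|T_{\sqrt n}|\approx\rho^{\sqrt n}$ i.i.d. copies of $L_{n-\sqrt n}$; averaging i.i.d. terms divides the (merely bounded) variance by $\rho^{\sqrt n}$, so Chebyshev gives a bound $C\rho^{-\sqrt n}$ that is summable over \emph{all} $n$, and Borel--Cantelli yields the almost-sure limit without any subsequence or interpolation. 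Your own sandwich could be repaired in the same spirit (apply the subtree decomposition to $H_n(\varepsilon)$ itself rather than to a sparse subsequence), but that repair is precisely the paper's key idea, which the proposal currently lacks.
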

Note that we have $\frac{1}{n}\int |x|\,dM_n(x) =\frac{1}{n\rho^n}\sum_{x\in G}|x|m_n(x)$.
For the second main result, which is a Stam type central limit theorem for the empirical distributions, let us first define for $a,b\in\mathbb{R}$ with $a\leq b$, the set
$$A(a,b)=\{x\in G:|x|\in[a,b]\}.$$
As usual, $\mathcal{N}(0,1)$ denotes the standard normal distribution and $\Phi$ its distribution function.
\begin{theorem}\label{thm:second_main}
Assume that \ref{A1}-\ref{A5} hold, and assume that for the underlying random walk $(Y_n)_{n\in\N}$ there exists $\sigma >0$, such that the following convergence in distribution holds
$$\frac{|Y_n|-n\ell}{\sqrt{n}\sigma}\xrightarrow[]{\mathcal{D}}\mathcal{N}(0,1), \quad {as }\hspace{0.2cm} n\to\infty.$$
Then for all $x\in\R$ we have
$$M_n\big(A(0,n\ell+x\sqrt{n}\sigma)\big)\xrightarrow[]{\Prob}W\cdot\Phi(x), \quad \text{ as }n \to \infty.$$
\end{theorem}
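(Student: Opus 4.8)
\textit{Proof strategy.} The plan is to prove the stronger statement that, writing $b_n:=n\ell+x\sqrt n\,\sigma$ and $N_n:=\#\{v\in T_n\colon |X_v|\le b_n\}$ with $T_n$ the $n$-th generation of the underlying Galton--Watson tree, the random variable $A_n:=M_n\big(A(0,b_n)\big)=\rho^{-n}N_n$ converges to $W\cdot\Phi(x)$ in $L^2$; convergence in probability then follows immediately. First observe that the CLT hypothesis with $\sigma>0$ forces $\ell>0$: were $\ell=0$, then for $x<0$ one would have $\Prob(|Y_n|\le x\sqrt n\sigma)=0$ for every $n$, contradicting convergence to $\Phi(x)>0$. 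Hence $b_n\to\infty$, so $A(0,b_n)=\{y\in G\colon |y|\le b_n\}$ for all large $n$ and the quantities below are well-posed. As a preliminary step I would upgrade the CLT hypothesis to an arbitrary starting point: since \ref{A4} gives $d(z,Y^z_m)\overset{\mathcal{D}}{=}|Y_m|$ for the walk $Y^z$ started in $z$, and the reverse triangle inequality gives $\big||Y^z_m|-d(z,Y^z_m)\big|\le|z|$, Slutsky's theorem yields
$$\frac{|Y^z_m|-m\ell}{\sqrt m\,\sigma}\xrightarrow[]{\mathcal{D}}\mathcal{N}(0,1)\qquad(m\to\infty)\quad\text{for every fixed }z\in G.$$
Consequently, putting $f_{m,b}(z):=\Prob\big(|Y^z_m|\le b\big)$, for each fixed $j\in\N$ one gets $f_{n-j,b_n}(z)\to\Phi(x)$ as $n\to\infty$, because $b_n=(n-j)\ell+\big(j\ell+x\sqrt n\sigma\big)$ and $\big(j\ell+x\sqrt n\sigma\big)/(\sqrt{n-j}\,\sigma)\to x$.

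The heart of the argument is a first/second moment computation built on the many-to-one and many-to-two (many-to-few) formulas of Section~\ref{sec:prelim}. Many-to-one gives $\E[A_n]=\Prob(|Y_n|\le b_n)\to\Phi(x)=\E[W\Phi(x)]$, using $\E[W]=1$. For the second moment I would use that the expected number of ordered pairs of distinct particles $v,w\in T_n$ whose most recent common ancestor lies in generation $j$ equals $(\theta-\rho)\,\rho^{\,2n-j-2}$, and that, conditionally on this split, $(X_v,X_w)$ is distributed as a walk run for $n$ steps together with an independent walk run for $n-j$ further steps from the position of the first after $j$ steps. Applying this with the indicator of $\{|X_v|\le b_n\}\cap\{|X_w|\le b_n\}$, and many-to-one on the diagonal, yields
$$\E\big[A_n^2\big]=\rho^{-n}\Prob(|Y_n|\le b_n)+(\theta-\rho)\,\rho^{-2}\sum_{j=0}^{n-1}\rho^{-j}\,\E\big[f_{n-j,b_n}(Y_j)^2\big].$$
The first term vanishes; in the sum each summand is bounded by the summable $\rho^{-j}$ and, for fixed $j$, bounded convergence together with the preliminary step gives $\E\big[f_{n-j,b_n}(Y_j)^2\big]\to\Phi(x)^2$, so by dominated convergence for series $\E[A_n^2]\to(\theta-\rho)\rho^{-2}\tfrac{\rho}{\rho-1}\Phi(x)^2=\tfrac{\theta-\rho}{\rho(\rho-1)}\Phi(x)^2=\E[W^2]\,\Phi(x)^2$, the last equality being the classical second-moment identity for the population martingale ($\mathrm{Var}(W)=\tfrac{\theta-\rho^2}{\rho(\rho-1)}$). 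Running the same computation with the single indicator of $\{|X_w|\le b_n\}$ (so $X_w$ enters only through its marginal law, that of $Y_n$) gives $\E[A_nW_n]=\rho^{-2n}\E[Z_nN_n]\to\tfrac{\theta-\rho}{\rho(\rho-1)}\Phi(x)=\E[W^2]\,\Phi(x)$.

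To finish, recall that by \ref{A1} the martingale $W_n=Z_n/\rho^n$ is bounded in $L^2$, hence $W_n\to W$ in $L^2$ and $\E[W_n^2]\to\E[W^2]$. Expanding
$$\E\big[(A_n-W_n\Phi(x))^2\big]=\E[A_n^2]-2\Phi(x)\,\E[A_nW_n]+\Phi(x)^2\,\E[W_n^2]$$
and plugging in the three limits shows the right-hand side tends to $\E[W^2]\Phi(x)^2-2\E[W^2]\Phi(x)^2+\E[W^2]\Phi(x)^2=0$; together with $W_n\Phi(x)\to W\Phi(x)$ in $L^2$ this gives $A_n\to W\Phi(x)$ in $L^2$, hence in probability, as claimed. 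I expect the main obstacle to be the second-moment step: one must fix the exact combinatorial weight and the exact joint law of $(X_v,X_w)$ in the many-to-two identity, and then justify exchanging $n\to\infty$ with the sum over the split generation $j$ — which works precisely because the geometric weights $\rho^{-j}$ make the tail of the sum uniformly negligible — all while invoking the all-starting-points CLT of the preliminary step to handle $f_{n-j,b_n}(Y_j)$ for each fixed $j$. The remaining manipulations are routine bookkeeping with the population martingale.
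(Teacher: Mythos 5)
Your proposal is correct, but it follows a genuinely different route from the paper. The paper never works with indicators of half-lines directly: it introduces the characteristic function $\Psi_n(t)=\rho^{-n}\sum_{v\in T_n}\e^{it|X_v|}$, proves via the spine-based many-to-two formula (conditioning on the splitting time $\tau$, Lemmas \ref{lem:many-to-two}, \ref{lem: Lemma many-to-two product}, \ref{lemma: decay of splitting time}) that $\E\big[|\Psi_n(t/\sigma\sqrt n)-W_n\varphi_n(t/\sigma\sqrt n)|^2\big]\to 0$ (Theorem \ref{thm: CLT mean square}), and then passes to the stated convergence through Gaussian-weighted $L^2$ estimates of integrated characteristic functions, a.s. subsequences, tightness, a separating family, the generalized Lévy continuity theorem (Theorem \ref{thm: Levy}) and Portmanteau. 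You instead fix $x$, set $b_n=n\ell+x\sqrt n\sigma$, and run a direct first/second moment computation on $A_n=M_n(A(0,b_n))$, decomposing the many-to-two sum over the generation $j$ of the most recent common ancestor: the expected pair count $(\theta-\rho)\rho^{2n-j-2}$ is correct, the conditional joint law of $(X_v,X_w)$ you describe is correct (spatial increments are i.i.d. along edges given the genealogy, so the expectation factorizes into the pair count times $\E[f_{n-j,b_n}(Y_j)^2]$), the all-starting-points CLT via \ref{A4} and Slutsky is a valid and necessary preliminary, the interchange of $n\to\infty$ with the sum over $j$ is legitimately handled by the geometric domination $\rho^{-j}$, and the constants match, since $\E[W^2]=1+\tfrac{\theta-\rho^2}{\rho(\rho-1)}=\tfrac{\theta-\rho}{\rho(\rho-1)}$, exactly as in Lemma \ref{lemma: decay of splitting time} (your MRCA decomposition is just a reparametrization of the paper's conditioning on $\tau$). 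What each approach buys: yours is more elementary, avoids Lévy continuity, tightness and the subsequence characterization entirely, and yields the slightly stronger $L^2$ convergence of $M_n(A(0,b_n))$ to $W\Phi(x)$ for each fixed $x$, which is all Theorem \ref{thm:second_main} asks for; the paper's characteristic-function route yields (subsequential, almost sure) weak convergence of the whole rescaled measure $M_n^*$, i.e. the statement that $M_n^*$ converges weakly in probability to $W\cdot\mathcal N(0,1)$ discussed after the theorem, which your pointwise-in-$x$ argument gives only after an additional (routine, by monotonicity and continuity of $\Phi$) upgrade. Your side remark that the CLT hypothesis forces $\ell>0$ is also correct and harmless, though not needed.
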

By Theorem \ref{thm:first_main}, the ensemble of particles after $n$ steps travels on average a distance of $n\ell$. Thus, for two real numbers $a<b$, the quantity $M_n(A(n\ell+a\sqrt{n}\sigma,n\ell+b\sqrt{n}\sigma))$ is a random measure that counts the number of particles in the branching process that deviate at most an order of $\sqrt{n}$ from the average drift $n\ell$, normalized by the average number of particles after $n$ steps. Theorem \ref{thm:second_main} claims that this random number of particles behaves like a normal distribution as $n$ goes to infinity, stretched by the limit $W$ of the population martingale. Thus, the random measure $M_n^*$ defined as
\begin{align}\label{eq:mstar}M_n^*([a,b])=M_n\big(A(n\ell+a\sqrt{n}\sigma,n\ell+b\sqrt{n}\sigma)\big),\end{align}
converges weakly in probability to $W\cdot\mathcal{N}(0,1)$ as $n\to\infty$, where $\mathcal{N}(0,1)$ is a standard normal random variable independent of $W$.

The proofs of the two main results in the setting of transitive graphs
differ from the ones on $\R$ on $\Z$ in \cite{ConjHarris}, in the sense that
none of the quantities of interest can be calculated precisely as in the case of $\Z$. We employ a different strategy, by  carefully splitting the branching process at suitable levels, in order to obtain fitting upper bounds.
The results apply to any supercritical branching random walk on Cayley graphs of finitely generated groups, which satisfies assumptions \ref{A1}-\ref{A5} and for which the rate of escape of the underlying random walk fulfills a strong law of large numbers and a central limit theorem. Examples are Gromov hyperbolic groups  \cite{MR3462675}, free products of finite groups \cite{SawSteg}, or nilpotent groups  \cite{MR1217561}.

Above and for the rest of the paper $"\xrightarrow[]{\mathcal{D}} "$ and $" \xrightarrow[]{\Prob} "$ denote convergence in distribution and in probability, respectively.
We want to emphasize here, that for the proofs of Theorem \ref{thm:first_main} and Theorem \ref{thm:second_main} we do not assume that the random walk $(Y_n)_{n\in \N}$ is transient or recurrent. In the recurrent case, the drift $\ell$ is $0$, and both main theorems still hold.

\textbf{Organization of the paper.} In Section \ref{sec:prelim} we  define branching processes and collect all results and the notation needed. In Section \ref{sec:lln} we first prove a weak version of Theorem \ref{thm:first_main} using the many-to-few formula. We then strengthen the convergence art to almost sure convergence using a decomposition of the branching process. Section \ref{sec:clt} is devoted to proving Theorem \ref{thm:second_main}. While our results hold for general transitive state space, in Section \ref{sec: example} we consider a concrete example, anisotropic random walks on homogeneous trees, and simulate our results. 

\section{Preliminaries}\label{sec:prelim}
This section is devoted to introducing the concepts used throughout our paper. Any assumptions we make on the underlying state space and on the processes under consideration are collected here.

\textbf{Graphs and random walks.}
Let $G=(V,E)$ be a connected infinite graph with vertex set $V$ and edge set $E$. For vertex $x\in V$, we will often use the notation $x\in G$ instead. We also assume that  $G$ is vertex transitive, which means that for any pair of vertices $x,y\in G$, there exists a graph automorphism $\Psi:G\rightarrow G$ with $y=\Psi(x)$. Let $d(\cdot,\cdot)$ be the graph distance in $G$, that is $d(x,y)$ represents the length of a shortest path between $x$ and $y$ in $G$.

Let $\textsf{P}=(\textsf{p}(x,y))_{x,y\in G}$ be a transition kernel on $G$, i.e.~a stochastic matrix indexed over the  vertices of $G$. A random walk on $G$ is  a Markov chain $(Y_n)_{n\in \N}$ with state space $V$, $Y_0=x$ for some $x\in G$, and such that the one-step transition probabilities are given by $\textsf{P}$:
$$\mathbb{P}(Y_{n+1}=y|Y_n=x)=\textsf{p}(x,y),$$
for all $n\in\N$.  We assume that the random walk with transition matrix $\textsf{P}$ has drift independent of its starting position, i.e.~Equation \eqref{eq: indep of drift} holds,
which implies  that one can bound $|Y_n|$ by a random walk on $\Z$. For $\eta_k = d(Y_k,Y_{k+1})$, $k\in \N$, in view of the triangle inequality it holds 
\begin{equation}\label{bound the RW}
|Y_n|\leq \eta_0+\eta_1+\cdots+\eta_{n-1},
\end{equation}
and the increments $\eta_k$ are i.i.d.~due to the Markov property and Equation \eqref{eq: indep of drift}. 

\textbf{Galton-Watson processes and trees.}
Let $\pi$ be a distribution supported on $\N$ with finite expectation $\rho>1$ and finite second moment $\theta$. We call $\pi$ the offspring distribution. Let $(Y_{n,j})_{n,j\in\N}$ be a sequence of i.i.d.~random variables distributed according to $\pi$. A Galton-Watson process $(Z_n)_{n\in\N}$ is a random process with $Z_0=1$ and for $n\geq 1$
$$Z_{n+1}=\sum_{k=1}^{Z_n} Y_{n,k}.$$
The population martingale denoted by $W_n:=Z_n/\rho^n$ is a non-negative martingale, and as such its almost sure limit
$W=\lim_{n\rightarrow\infty}W_n$
exists. By \cite{KestenStigum}, this limit $W$ is almost surely finite and non-negative if the offspring distribution fulfills the $\pi\log \pi$ condition.

A Galton-Watson tree $T$ is a random rooted tree  that is recursively build from the Galton-Watson process $(Z_n)_{n\in\N}$ as follows. We start at level $0$ with a single vertex $r$ called the root. If for some $n\in\N$, the tree has been constructed up to level $n$, with vertices at level $n$ denoted by $v^n_1,...,v^n_{Z_n}$, then the vertex $v^n_i$ for $i\in\{1,...,Z_n\}$ has a random number  $Y_{n,i}$ of children with distribution $\pi$.
We use the notation $T_n$ for $n\in\N$ to denote the vertices in $T$ at level $n$, i.e.~at distance $n$ from the root (note that the quantities $Z_n$ and $|T_n|$ are the same and we use both of them). We denote the level of a vertex $v\in T$ by $|v|$. For vertices $v,w\in T$, we use the notation $v<w$ to mention that $w$ is a predecessor of $v$, that is the unique path from $v$ to $r$ goes through $w$. Notice that every vertex $v\in T_n$ has a unique predecessor $p(v)\in T_{n-1}$ and therefore every vertex $v\in T_n$, has a unique predecessor at level $i$ for all $i\in\{0,...,n-1\}$. One can compute the predecessor at level $i$ by an $n-i$ fold application of $p$ to $v$, i.e.~the unique predecessor of $v$ at level $i$ is $p^{n-i}(v)$.

\textbf{Branching random walk.}
A branching random walk starts with a single particle at time $0$ (level $0$) at a vertex $o\in G$, which we will call the origin. We write $|x| = d(o,x)$ for $x\in G$. 
At every time $n$, each particle at level $n$ splits into particles whose number is distributed according to $\pi$ and then it dies, and all offspring particles take a random step distributed according to the transition kernel $\textsf{P}$ and independently from each other. Formally, this can be described as a random walk $(X_v)_{v\in T}$ indexed by a Galton-Watson tree $T$ and defined inductively as follows. Let the branching walk start with $X_r=o$. Then for all $v\in T$, given the particle location $X_{p(v)}=x$ for $x\in G$, $X_v$ is then distributed according to the distribution $(\textsf{p}(x,y))_{y\in G}$ independently of all previous actions performed.
The empirical distribution of the branching process $(X_v)_{v\in T}$ is defined as the random measure
$$M_n:=\frac{1}{\rho^n}\sum_{x\in G}m_n(x)\delta_x,$$
where $m_n(x)$ denotes the random number of particles at location $x\in V$ in the $n$-th generation (after $n$ steps of the process):
$$m_n(x)=|\{v\in T_n|X_v=x\}|.$$
By rewriting the empirical distribution in terms of the tree indexed random walk, we have
$$M_n = \frac{1}{\rho^n}\sum_{v\in T_n}\delta_{X_v}.$$
See \cite{BPIntro} for an introduction to branching processes.

\textbf{Many-to-few formula.} We will use several times the so-called many-to-few formula, which we briefly describe below. This allows to relate the moments of the branching random walk with offspring distribution $\pi$ to the underlying random walk $(Y_n)$ with transition kernel $\mathsf{P}$. We state here this principle only in two special cases needed for our purposes; for a more general statement of the many-to-few lemma we refer the reader to \cite{ManyToFew}. We start with the many-to-one lemma which needs no special construction and can be easily proven.
\begin{lemma}[Many-to-one \cite{ManyToFew}]\label{lem: many-to-one}
Let $f:G\to\R$ be a measurable function. Then for any $n\in\N$ it holds
$$\mathbb{E}\Big[\sum_{v\in T_n}f(X_v)\Big] = \rho^n\E[f(Y_n)],$$
where $Y_n$ is a random walk on $G$ with transition kernel $\mathsf{P}$. 
\end{lemma}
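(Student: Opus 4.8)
The plan is to condition on the randomness of the Galton-Watson tree and then exploit that, conditionally on the tree, each ancestral line of the branching random walk is an ordinary copy of the underlying walk $(Y_n)$. Concretely, I would condition on the tree $T$: for any fixed vertex $v\in T_n$, the positions along its ancestral line $r=p^n(v),p^{n-1}(v),\dots,p(v),v$ form, by the very definition of the branching random walk, a Markov chain with transition kernel $\mathsf{P}$ started at $o$, so $X_v$ has the conditional law of $Y_n$ given $T$; in particular $\E[f(X_v)\mid T]=\E[f(Y_n)]$. Summing over the $|T_n|=Z_n$ vertices at level $n$ yields $\E\big[\sum_{v\in T_n}f(X_v)\mid T\big]=Z_n\,\E[f(Y_n)]$, and taking expectations over $T$, using the standard identity $\E[Z_n]=\rho^n$ (equivalently $\E[W_n]=1$), gives the claim.

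An alternative, purely inductive argument avoids conditioning on the entire tree at once. The base case $n=0$ is trivial since $T_0=\{r\}$ and $X_r=o$. For the inductive step, split the sum over level $n+1$ by parent, $\sum_{v\in T_{n+1}}f(X_v)=\sum_{w\in T_n}\sum_{v\colon p(v)=w}f(X_v)$, and let $\F_n$ be generated by the tree up to level $n$ together with the positions $(X_w)_{w\in T_n}$. Conditionally on $\F_n$, the vertex $w$ has an independent $\pi$-distributed number of offspring, each moving from $X_w$ with law $\mathsf{p}(X_w,\cdot)$, so Wald's identity gives $\E\big[\sum_{v\colon p(v)=w}f(X_v)\mid\F_n\big]=\rho\,(\mathsf{P}f)(X_w)$, where $(\mathsf{P}f)(x)=\sum_{y\in G}\mathsf{p}(x,y)f(y)$. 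Hence
\[
\E\Big[\sum_{v\in T_{n+1}}f(X_v)\Big]=\rho\,\E\Big[\sum_{w\in T_n}(\mathsf{P}f)(X_w)\Big]=\rho\cdot\rho^n\,\E\big[(\mathsf{P}f)(Y_n)\big]=\rho^{n+1}\E[f(Y_{n+1})],
\]
using the induction hypothesis applied to $\mathsf{P}f$ and the Markov property $\E[(\mathsf{P}f)(Y_n)]=\E[f(Y_{n+1})]$.

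As the surrounding text indicates, no serious difficulty is expected; the only points that need care are the interchanges of (possibly infinite) sums with conditional expectations and the application of Wald's identity. I would handle these by first proving the identity for nonnegative $f$, where Tonelli's theorem legitimizes every interchange --- and where one uses that $T_n$ is almost surely finite, so the sums appearing are almost surely finite --- and then extending to general $f$ via $f=f^+-f^-$ under the integrability hypothesis $\E[|f(Y_n)|]<\infty$ that is in any case implicit in the statement.
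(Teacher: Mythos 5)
Your first argument is essentially the paper's own proof: condition on the tree, observe that each $X_v$ with $v\in T_n$ has the law of $Y_n$, sum to get $Z_n\,\E[f(Y_n)]$, and take expectations using $\E[Z_n]=\rho^n$; your attention to Tonelli/integrability is a harmless refinement of the same argument. The alternative induction via Wald's identity and the operator $\mathsf{P}f$ is also correct, but it is not needed and adds nothing beyond the direct conditioning proof.
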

\begin{proof}
By conditioning on $T_n$ and using the linearity of the expectation we obtain
\begin{align*}
\mathbb{E}\Big[\sum_{v\in T_n}f(X_v)\big|T_n\Big] = \sum_{v\in T_n}\E[f(X_v)|T_n] = Z_n\E[f(Y_n)]. 
\end{align*}
Taking the expectation over the equation above yields
$$\mathbb{E}\Big[\sum_{v\in T_n}f(X_v)\Big] = \E[Z_n]\E[f(Y_n)] = \rho^n\E[f(Y_n)],$$
and this proves the claim.\end{proof}
While the above result reduces the behaviour of the branching random walk to the behaviour of a single random walk, the question of estimating second moments becomes one about two dependent random walks, a \textit{many-to-two formula}. For stating it, we first construct a modified branching process. 
Let us first - for later ease of notation - redefine the first and second moment of $\pi$ as
\begin{align*}
    w_1=\rho\quad \text{and}\quad w_2=\theta.
\end{align*}
We define a new probability space with new probability measure $\ProbHat$, which has  distinguished lines of descent which we call spines; the corresponding expectation is denoted by $\ExpHat$. Shortly $\ProbHat$ is simply an extension of $\mathbb{P}$ in that all particles behave as in the original branching process with the exception that some particles carry marks showing that they are part of a spine. These marks start at the root and move from one level of the Galton-Watson process to the next, by picking one of their descendants uniformly at random an independently of each other. Of key interest is the time at which the marks split, as up to that time the marks follow a common path, whereas after their splitting time, they  move independently of each other. Under $\ProbHat$ particles behave as follows:
\begin{itemize}
\setlength\itemsep{0em}
    \item We start with one particle at the origin $o$ which (as well as its position) carries two marks $1,2$.
    \item We think of each of the marks as a spine, and write $\xi_n^i$ for whichever particle carries mark $i$  at time $n$, and  $\zeta_n^i = X_{\xi_n^i}$ for  its position in $G$, for $i=1,2$.
    \item Particles who have no marks move according to the underlying chain and branch according to the offspring distribution $\pi$.
    \item Particles who have $j\in\lbrace 1,2 \rbrace$ marks move according to the underlying chain but branch at a modified rate
$$\pi^{(j)}(k) = \pi(k)\frac{k^j}{w_j}. $$
    \item For every $n\in\N$ the mark $\xi_{n+1}^i$ is chosen uniformly among all descendants of $\xi_n^i$. Therefore, for $i\in\lbrace 1,2\rbrace$ the chain $(\zeta_n^i)_{n\in\N}$ moves according to the underlying random walk, and if the spine particles have split, they move independent of each other.  
\end{itemize}
Let us now define the skeleton up to time $n$ as
$$\text{skel}(n):=\{\xi^i_j|i\in\{1,2\},1\leq j\leq n\}\setminus \lbrace r\rbrace,$$
which represents the collection of particles that have carried at least one spine up to time $n$. Particles not in the skeleton (those carrying no marks) have children according to distribution $\pi$.
For $v\in T$, we write $D_v$ for the number of spinal particles passing through $v$:
$$D_v:=|\{i\in\{1,2\}|\xi_{|v|}^i=v\}|.$$
With this extended probability space, we can now state the many-to-two formula. For a more general version and a proof see \cite[Lemma 8]{ManyToFew}. 
\begin{lemma}[Many-to-two \cite{ManyToFew}]\label{lem:many-to-two}
Let $f:G^2\rightarrow \R$ be a measurable function. Then for any $n\in \N$ it holds
\begin{align*}
    \mathbb{E}\Big[\sum_{v_1,v_2\in T_n}f(X_{v_1},X_{v_2})\Big]=\ExpHat\Big[f(\zeta_n^1,\zeta_n^2)\prod_{v\in\text{skel}(n)}w_{D_{p(v)}}\Big],
\end{align*}
where $p(v)$ denotes the unique immediate predecessor of $v$ in $T$.
\end{lemma}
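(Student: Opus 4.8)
The plan is to prove the identity by induction on $n$, peeling off the first branching step and tracking how the two spines are routed. Observe first that the right-hand side makes sense because the skeleton is a finite set of particles (at most $2n$ of them), so the product $\prod_{v\in\mathrm{skel}(n)}w_{D_{p(v)}}$ is a finite product of the two numbers $w_1=\rho$ and $w_2=\theta$. The base case $n=0$ is trivial: both sides equal $f(o,o)$, since $T_0=\{r\}$, $\mathrm{skel}(0)=\emptyset$, and $\zeta_0^1=\zeta_0^2=o$.

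For the inductive step the key observation is a change-of-measure (Radon–Nikodym) identity comparing the spine construction under $\ProbHat$ with the plain branching random walk under $\Prob$. Concretely, I would first condition on the configuration of the very first step: the number $N$ of children of the root, their positions $x_1,\dots,x_N$ in $G$, and — on the $\ProbHat$ side — which child or children carry the marks $1,2$. Under $\ProbHat$ the root has $j$ marks with $j=2$, so it branches with the size-biased law $\pi^{(2)}(k)=\pi(k)k^2/w_2$; if the two marks go to the same child (probability $1/N$ given $N$ children, since each mark is assigned uniformly and independently) that child carries $2$ marks and the factor $w_{D_{p(\cdot)}}=w_2$ is incurred at the next level, while if they split ($1-1/N$) each of the two chosen children carries one mark and contributes a factor $w_1$. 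The point is that the weight $\pi^{(2)}(k)\cdot(\text{prob. of a given mark-assignment among }k\text{ children})\cdot w_{(\cdot)}$ collapses back, after summing, to the plain probabilities $\pi(k)$ together with exactly the combinatorial count of ordered pairs $(v_1,v_2)\in T_1^2$ routed through the chosen children. One then applies the inductive hypothesis to each of the (at most two) subtrees hanging off the marked children — each such subtree, rooted at $x_i$ and carrying the appropriate number of marks, is again a spine construction of the same type but with the underlying walk started at $x_i$ — and uses the independence of distinct subtrees together with the many-to-one Lemma 1 to handle the unmarked subtrees (where $f$ is summed against a single free walk, contributing the $\rho^{\,\cdot}$ factors that are absorbed into the $w_1$'s). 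Collecting the contributions of the "both marks together" and "marks split" cases reproduces $\mathbb{E}[\sum_{v_1,v_2\in T_n}f(X_{v_1},X_{v_2})]$, because every ordered pair $(v_1,v_2)\in T_n^2$ corresponds to a unique choice of whether $v_1,v_2$ share an initial ancestor and, more generally, of the generation at which their ancestral lines split — which is exactly the information recorded by the two spines $\xi^1,\xi^2$ and the weights $w_{D_{p(v)}}$.

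The main obstacle, and the step I would spend the most care on, is the bookkeeping of the spine-splitting: making precise that the modified branching rates $\pi^{(j)}$ together with the uniform mark-assignment rule produce, after summation, precisely the right multiplicity of ordered pairs and precisely the product of $w_{D_{p(v)}}$ weights along the joined part of the two ancestral rays. In particular one must check that the weight is attached at $p(v)$ rather than at $v$ (so the final generation's particles carry no extra weight) and that a particle carrying two marks contributes a single factor $w_2$, not $w_1^2$ — this is the combinatorial heart of why the many-to-two formula, rather than a naive square of many-to-one, is correct. Everything else — the measurability of $f$, the finiteness of the product, and the interchange of expectation with the finite sums — is routine, using \ref{A1} to guarantee $w_2=\theta<\infty$ so that all the expectations appearing are finite. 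I would also remark that this is a special case of \cite[Lemma 8]{ManyToFew}, so an alternative to the self-contained induction is simply to specialize that general statement to two spines and the offspring-weighting $k\mapsto k^j/w_j$; I would present the inductive argument for completeness but cite \cite{ManyToFew} for the general principle.
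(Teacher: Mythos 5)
The paper itself gives no proof of this lemma---it is quoted as a special case of \cite[Lemma 8]{ManyToFew}---so a self-contained induction of the kind you outline is a legitimate alternative route, and your change-of-measure strategy is the standard one. The problem is that the step you yourself single out as the combinatorial heart is left unexecuted, and the way you describe it is not correct. At a spine vertex $u$ carrying $j$ marks with $k$ children, the $\ProbHat$-probability of a prescribed routing of the $j$ marks to prescribed children is $\pi^{(j)}(k)\cdot k^{-j}=\pi(k)/w_j$; hence the change of measure requires exactly \emph{one} compensating factor $w_{D_u}$ per branching event on the skeleton, whether or not the two marks separate at $u$. Your description of the split case (``each of the two chosen children carries one mark and contributes a factor $w_1$'') attaches the wrong weight at the branch point, and the product $\prod_{v\in\text{skel}(n)}w_{D_{p(v)}}$ that you are trying to reproduce charges the branch point \emph{twice}, once for each of its two spine children. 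Consequently the asserted ``collapse back to $\pi(k)$ times the count of ordered pairs'' does not come out: per ordered pair of distinct children of the root one gets $\pi^{(2)}(k)\cdot k^{-2}\cdot w_2^{2}=\pi(k)\,\theta$ rather than $\pi(k)$.

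You can see that the induction cannot close as planned already at $n=1$ with $f\equiv 1$, a case your argument must pass: the left-hand side is $\E[Z_1^2]=\theta$, while the right-hand side as displayed equals $\theta\,\ProbHat(\xi_1^1=\xi_1^2)+\theta^{2}\,\ProbHat(\xi_1^1\neq\xi_1^2)$, and no routing rule with positive splitting probability reconciles the two. What the change-of-measure computation actually yields is the identity with the product $\prod_v w_{D_v}$ taken over the spine vertices $v$ of generations $0,\dots,n-1$, i.e.\ a single factor $w_2$ at the splitting vertex (giving $\theta^{k+1}\rho^{2(n-k-1)}$ on $\{\tau=k\}$ rather than the $\theta^{k+2}\rho^{2(n-k-1)}$ of Lemma \ref{lem: Lemma many-to-two product}); with that indexing your induction, the $n=1$ check, and the comparison with $\E[W_n^2]$ all go through. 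So before this can be written up you must redo the weight bookkeeping and reconcile it with the displayed statement and with the exact indexing in \cite[Lemma 8]{ManyToFew}; merely asserting the collapse, as the proposal does, leaves the decisive step unproven and, as described, wrong. (One point in your favour: the routing rule for the marks---each mark choosing a child uniformly and independently---is indeed needed and is not spelled out in the paper, so supplying it explicitly is a necessary part of any complete proof.)
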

Let us also define the first time the two spine particles split as 
$$\tau:=\inf\{i\in\N:\xi^1_i\neq\xi^2_i\}-1.$$
Throughout this paper we will condition several times on $\tau$, because on the event that $\tau$ is known, the random product in Lemma \ref{lem:many-to-two} reduces to an easy deterministic expression, that we state in the next result, and whose proof is visualized in Figure \ref{fig:many-two}.

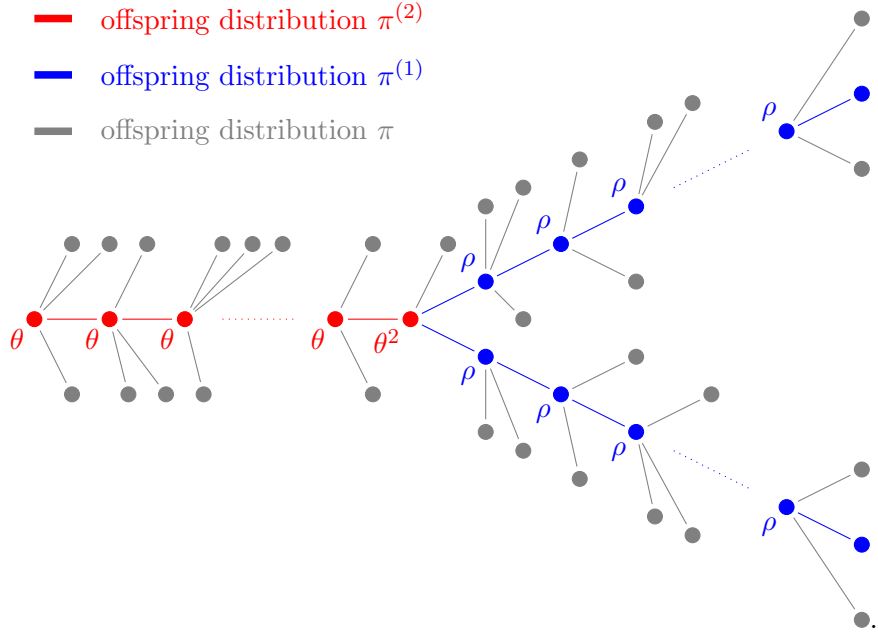
\begin{figure}[h]\label{fig:many-two}
\begin{center}
\begin{tikzpicture}[
  main/.style={circle, color = gray , draw, fill = gray, inner sep = 0.07cm, outer sep = 0.07cm},
  edge/.style={draw, color = gray}
]

\node[main, color = red] (A1) at (0,0) {};
\node[main, color = red] (A2) at (1,0) {};
\node[main, color = red] (A3) at (2,0) {};
\node[main, color = red] (A4) at (4,0) {};
\node[main, color = red] (A5) at (5,0) {};

\node[main, color = blue] (B1) at (6,-0.5) {};
\node[main, color = blue] (B2) at (7,-1) {};
\node[main, color = blue] (B3) at (8,-1.5) {};
\node[main, color = blue] (B4) at (10,-2.5) {};
\node[main, color = blue] (B5) at (11,-3) {};

\node[main, color = blue] (C1) at (6,0.5) {};
\node[main, color = blue] (C2) at (7,1) {};
\node[main, color = blue] (C3) at (8,1.5) {};
\node[main, color = blue] (C4) at (10,2.5) {};
\node[main, color = blue] (C5) at (11,3) {};

\node[main] (D1) at (0.5,1) {};
\node[main] (D1a) at (1,1) {};
\node[main] (D2) at (1.5,1) {};
\node[main] (D3) at (2.5,1) {};
\node[main] (D3a) at (2.9,1) {};
\node[main] (D3b) at (3.3,1) {};
\node[main] (D4) at (4.5,1) {};
\node[main] (D5) at (5.5,1) {};

\node[main] (E1) at (0.5,-1) {};
\node[main] (E2) at (1.25,-1) {};
\node[main] (E2a) at (1.75,-1) {};
\node[main] (E3) at (2.25,-1) {};
\node[main] (E4) at (4.5,-1) {};

\node[main] (F1) at (6.5,-1.75) {};
\node[main] (F1a) at (6,-1.5) {};
\node[main] (F2) at (7.25,-2.125) {};
\node[main] (F2a) at (8,-0.5) {};
\node[main] (F3) at (8.25,-2.625) {};
\node[main] (F3a) at (8.75,-2.875) {};
\node[main] (F3b) at (9,-1) {};
\node[main] (F4) at (11,-4) {};
\node[main] (F4a) at (11,-2) {};

\node[main] (G1) at (6.5,1.75) {};
\node[main] (G1a) at (6,1.5) {};
\node[main] (G1b) at (6.5,0) {};
\node[main] (G2) at (7.25,2.125) {};
\node[main] (G2a) at (8,0.5) {};
\node[main] (G3) at (8.25,2.625) {};
\node[main] (G3a) at (8.75,2.875) {};
\node[main] (G4) at (11,4) {};
\node[main] (G4a) at (11,2) {};

\draw[dotted, color = red] (2.5,0) -- (3.5,0);
\draw[dotted, color = blue] (8.5,1.75) -- (9.5,2.25);
\draw[dotted, color = blue] (8.5,-1.75) -- (9.5,-2.25);

\node[below left, color = red] at (0,0) {$\theta$};
\node[below left, color = red] at (1,0) {$\theta$};
\node[below left, color = red] at (2,0) {$\theta$};
\node[below left, color = red] at (4,0) {$\theta$};
\node[below left, color = red] at (5,0) {$\theta^2$};

\node[below left, color = blue] at (6,-0.5) {$\rho$};
\node[below left, color = blue] at (7,-1) {$\rho$};
\node[below left, color = blue] at (8,-1.5) {$\rho$};
\node[below left, color = blue] at (10,-2.5) {$\rho$};

\node[above left, color = blue] at (6,0.5) {$\rho$};
\node[above left, color = blue] at (7,1) {$\rho$};
\node[above left, color = blue] at (8,1.5) {$\rho$};
\node[above left, color = blue] at (10,2.5) {$\rho$};

\draw[edge, color = blue] (A5) -- (B1);
\draw[edge, color = blue] (A5) -- (C1);

\draw[edge, color = red] (A1) -- (A2);
\draw[edge, color = red] (A2) -- (A3);
\draw[edge, color = red] (A4) -- (A5);

\draw[edge, color = blue] (B1) -- (B2);
\draw[edge, color = blue] (B2) -- (B3);
\draw[edge, color = blue] (B4) -- (B5);

\draw[edge, color = blue] (C1) -- (C2);
\draw[edge, color = blue] (C2) -- (C3);
\draw[edge, color = blue] (C4) -- (C5);

\draw[edge] (A1) -- (D1);
\draw[edge] (A1) -- (D1a);
\draw[edge] (A2) -- (D2);
\draw[edge] (A3) -- (D3);
\draw[edge] (A3) -- (D3a);
\draw[edge] (A3) -- (D3b);
\draw[edge] (A4) -- (D4);
\draw[edge] (A5) -- (D5);

\draw[edge] (A1) -- (E1);
\draw[edge] (A2) -- (E2);
\draw[edge] (A2) -- (E2a);
\draw[edge] (A3) -- (E3);
\draw[edge] (A4) -- (E4);

\draw[edge] (B1) -- (F1);
\draw[edge] (B1) -- (F1a);
\draw[edge] (B2) -- (F2);
\draw[edge] (B2) -- (F2a);
\draw[edge] (B3) -- (F3);
\draw[edge] (B3) -- (F3a);
\draw[edge] (B3) -- (F3b);
\draw[edge] (B4) -- (F4);
\draw[edge] (B4) -- (F4a);

\draw[edge] (C1) -- (G1);
\draw[edge] (C1) -- (G1a);
\draw[edge] (C1) -- (G1b);
\draw[edge] (C2) -- (G2);
\draw[edge] (C2) -- (G2a);
\draw[edge] (C3) -- (G3);
\draw[edge] (C3) -- (G3a);
\draw[edge] (C4) -- (G4);
\draw[edge] (C4) -- (G4a);

\draw[edge, line width = 0.1cm, color = red] (0,4) -- (0.5,4);
\draw[edge, line width = 0.1cm, color = blue] (0,3.25) --(0.5,3.25);
\draw[edge, line width = 0.1cm, color = gray] (0,2.5) -- (0.5,2.5);

\node[right, color = red] at (0.75,4) {offspring distribution $\pi^{(2)}$};
\node[right, color = blue] at (0.75,3.25) {offspring distribution $\pi^{(1)}$};
\node[right, color = gray] at (0.75,2.5) {offspring distribution $\pi$};
\end{tikzpicture}.
\end{center}
\caption{The following diagram visualizes the proof of Lemma \ref{lem: Lemma many-to-two product}: The red and blue coloured part is the skeleton, on red vertices the offspring distribution is $\pi^{(2)}$, on blue vertices the  offspring distribution is  $\pi^{(1)}$, and on grey vertices the offspring distribution is $\pi$. For every vertex $v$ on the skeleton we look at the parent $p(v)$ and add either the first moment $\rho$ or the second moment $\theta$ to $p(v)$, depending on whether $p(v)$ is on the blue or the red part (whether there are one or two spine particles at $p(v)$). When taking the product, on the event $\lbrace \tau = k \rbrace$, the red part gives a contribution of $\theta^{k}\theta^2$ and the blue part a contribution of $\rho^{2(n-k-1)}$, which yields the desired result.}
\end{figure}
\begin{lemma}\label{lem: Lemma many-to-two product}
On the event $\lbrace \tau \geq n \rbrace$ it holds that
$$\prod_{v\in\text{skel}(n)}w_{D_{p(v)}} = \theta^n,$$
and on the event $\lbrace \tau = k \rbrace$ for some $k<n$ it holds that
$$\prod_{v\in\text{skel}(n)}w_{D_{p(v)}} = \rho^{2n}\left(\frac{\theta}{\rho}\right)^2\left(\frac{\theta}{\rho^2}\right)^k.$$
\end{lemma}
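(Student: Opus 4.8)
The plan is to condition on the splitting time $\tau$ and then read off the shape of $\text{skel}(n)$ together with the number of marks carried by the parents of its vertices, level by level. The only facts needed are: (i) by definition of $\tau$, on $\{\tau\ge k\}$ the two spines coincide at all levels $j\le k$, i.e.\ $\xi^1_j=\xi^2_j$ for $0\le j\le \min(k,n)$, while on $\{\tau=k\}$ they first differ at level $k+1$; and (ii) once the spine particles occupy distinct vertices at level $k+1$, they stay distinct at every later level, since in the Galton--Watson tree $T$ two distinct vertices of the same generation have disjoint descendant subtrees, and $\xi^i_j$ for $j>k+1$ is a descendant of $\xi^i_{k+1}$. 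Note also that $r$ is the only level-$0$ vertex, so removing $r$ in the definition of $\text{skel}(n)$ just discards the common ancestor and leaves the vertices at levels $1,\dots,n$.

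First I would treat $\{\tau\ge n\}$. Here $\xi^1_j=\xi^2_j$ for all $j\le n$, so $\text{skel}(n)=\{\xi^1_1,\dots,\xi^1_n\}$ consists of $n$ distinct vertices forming a path. For each such $v=\xi^1_j$, its parent is $p(v)=\xi^1_{j-1}$ with $j-1\le n-1<n\le\tau$, hence $\xi^1_{j-1}=\xi^2_{j-1}$ carries both marks, $D_{p(v)}=2$, and $w_{D_{p(v)}}=w_2=\theta$. Multiplying the $n$ factors gives $\theta^n$, as claimed.

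Next, on $\{\tau=k\}$ with $k<n$, the skeleton splits into the common segment $\xi^1_1,\dots,\xi^1_k$ ($k$ vertices) and the two disjoint branches $\xi^1_j$, $\xi^2_j$ for $k+1\le j\le n$ ($2(n-k)$ vertices). I would then classify the parents into three groups: for $j\le k$ the parent $\xi^1_{j-1}$ carries two marks ($k$ factors of $w_2=\theta$); for $j=k+1$ both $\xi^1_{k+1}$ and $\xi^2_{k+1}$ have the \emph{same} parent $\xi^1_k=\xi^2_k$, which still carries both marks ($2$ factors of $w_2=\theta$); and for $k+2\le j\le n$ the parents $\xi^1_{j-1}$ and $\xi^2_{j-1}$ each carry a single mark ($2(n-k-1)$ factors of $w_1=\rho$). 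The product is therefore $\theta^{k}\cdot\theta^{2}\cdot\rho^{2(n-k-1)}$, and a one-line rearrangement rewrites this as $\rho^{2n}(\theta/\rho)^2(\theta/\rho^2)^k$, matching the statement (this is exactly the accounting depicted in Figure~\ref{fig:many-two}).

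The only place where care is required is the bookkeeping at the splitting level: even though $\tau=k$ records that the spines have ``split,'' their common parent at level $k$ carries two marks, so the two vertices $\xi^1_{k+1},\xi^2_{k+1}$ contribute a factor $\theta^2$ rather than $\rho^2$ (or $\theta\rho$); and the shared segment $\xi^1_1,\dots,\xi^1_k$ must be counted once, not twice. Once the level-by-level structure of $\text{skel}(n)$ conditioned on $\tau$ is pinned down, the rest is elementary.
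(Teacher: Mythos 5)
Your proposal is correct and follows essentially the same route as the paper's proof: decompose $\text{skel}(n)$ according to $\tau$ into the shared segment, the two children of the splitting vertex, and the two separated branches, then count factors of $\theta$ and $\rho$ exactly as in Figure \ref{fig:many-two}. Your explicit remark that both $\xi^1_{k+1}$ and $\xi^2_{k+1}$ share the doubly-marked parent $\xi^1_k=\xi^2_k$, yielding $\theta^2$ rather than $\rho^2$, is precisely the bookkeeping point the paper's argument hinges on.
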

\begin{proof}
If $\tau\geq n$ then the spine particles have not split up to time $n$, and therefore the skeleton is just the path of the first (and second) spine particle, i.e.
$$\text{skel}(n) = \lbrace \xi^1_1,\xi_2^1,\dots,\xi^1_n\rbrace = \lbrace \xi^2_1,\xi_2^2,\dots,\xi^2_n\rbrace.$$  
Since in this case on every $v\in\text{skel}(n)$ there are two spine (marked) particles, we have $w_{D_{p(v)}}=\theta$ for every $v\in\text{skel}(n)$, which implies that
$$\prod_{v\in\text{skel}(n)}w_{D_{p(v)}} = \theta^n.$$
Now let $k<n$. Then on the event that the spine particles split at some time $k$, i.e.~on the event $\lbrace \tau = k \rbrace$, the skeleton can be written as
\begin{align*}
\text{skel}(n) &= \lbrace \xi^1_1,\xi^1_2\dots\xi^1_k\rbrace \cupdot \lbrace \xi^1_{k+1},\xi^1_{k+2},\dots,\xi^1_n \rbrace \cupdot
\lbrace \xi^2_{k+1},\xi^2_{k+2},\dots,\xi^2_n \rbrace\\
&=\lbrace \xi^2_1,\xi^2_2\dots\xi^2_k\rbrace \cupdot \lbrace \xi^1_{k+1},\xi^1_{k+2},\dots,\xi^1_n \rbrace \cupdot
\lbrace \xi^2_{k+1},\xi^2_{k+2},\dots,\xi^2_n \rbrace.
\end{align*}
Here, $\cupdot$ denotes the disjoint union of sets. For every $v\in  \lbrace \xi^1_1,\xi^1_2\dots\xi^1_k\rbrace$  it clearly holds that $w_{D_{p(v)}} = \theta$, since for every $v\in  \lbrace \xi^1_1,\xi^1_2\dots\xi^1_k\rbrace$ it holds that $p(v)\in  \lbrace \xi^1_1,\xi^1_2\dots\xi^1_k\rbrace$ and $D_{p(v)} = 2$. Thus the total contribution of this set to the product is $\theta^k$. For both $\xi^1_{k+1}$ and $\xi^2_{k+1}$ their parent is given by $\xi_k^1 = \xi_k^2$, and therefore for both factors we get a contribution of $\theta$, and so in total a contribution of $\theta^2$. Moreover we get a contribution of $\rho$ for all elements in $\lbrace \xi^1_{k+2},\xi^1_{k+3},\dots,\xi^1_n \rbrace \cupdot
\lbrace \xi^2_{k+2},\xi^2_{k+3},\dots,\xi^2_n \rbrace$ and thus a total contribution of $\rho^{2(n-k-1)}$. This yields
$$\prod_{v\in\text{skel}(n)}w_{D_{p(v)}} = \theta^k\theta^2\rho^{2(n-k-1)} = \rho^{2n}\left(\frac{\theta}{\rho}\right)^2\left(\frac{\theta}{\rho^2}\right)^k$$ 
on the event $\lbrace \tau = k \rbrace$.
\end{proof}

\paragraph{Characteristic functions and Levy's continuity theorem.} For the proof of Theorem \ref{thm:second_main}, we use  characteristic functions and a generalized version of Levy's continuity theorem, that we state below. For a real valued random variable $X$ its characteristic function is defined as 
$$\varphi_X(t) =  \E[\e^{itX}], \quad t\in\R,$$
and for a finite measure (not necessarily a probability measure) $\mu$ on $\R$ its characteristic function is defined as 
$$\varphi_\mu(t) = \int \e^{itx} \,d\mu(x), \quad t\in\R.$$
Recall that a sequence of measures $(\mu_n)_{n\in\N}$ on $\R$ is called tight if for every $\varepsilon > 0$ there exists a $K>0$ such that $\sup_{n\in\N} \mu_n([-K,K]^c) < \varepsilon$.
The classical Levy's continuity theorem states that under the condition that the sequence  $(\mu_n)_{n\in\N}$ is uniformly bounded, pointwise convergence of its characteristic functions together with tightness implies weak convergence.  In our case, in the proof of Theorem \ref{thm:second_main}, the appearing sequence of characteristic functions does not necessarily converge pointwise but rather on averages over intervals. Therefore, a weaker version of Levy's continuity theorem is needed. Recall that $(\mu_n)_{n\in\N}$ converges weakly to a measure $\mu$ if
\begin{align}\label{def: weak_conv}
\lim_{n\to\infty} \int f \, d\mu_n = \int f \, d\mu, \quad \text{for all } f\in\mathcal{C}_b(\R),
\end{align}
where $\mathcal{C}_b(\R)$ represents the set of bounded continuous real valued functions on $\R$. The idea behind Levy's continuity theorem is that the family of functions $\mathcal{S} = \lbrace x\mapsto\e^{itx}: t\in\R\rbrace$ is big enough that the following convergence
\begin{align*}
   \lim_{n\to\infty} \int f \, d\mu_n = \int f \, d\mu, \quad \text{for all } f\in\mathcal{S},
\end{align*}
(pointwise convergence of the characteristic functions) implies the convergence in Equation \eqref{def: weak_conv}. The decisive property $\mathcal{S}$ has is that it is separating.
We call the subset $\mathcal{S}\subset\mathcal{C}_b(\R)$ separating if for all finite measures $\mu\neq\nu$ on $\R$ there exists a function $f\in\mathcal{S}$ such that 
\begin{align*}\int f\,d\mu \neq \int f\,d\nu.\end{align*}
The classical Levy' s continuity theorem can now be generalized to any separating family $\mathcal{S}\subset \mathcal{C}_b(\R)$. We will use this result in order to prove the central limit theorem for the empirical distributions.
\begin{theorem}\cite[Theorem 13.34]{Klenke} \label{thm: Levy}
Let $(\mu_n)_{n\in\N}$ be a sequence of measures on $\R$  such that  $\sup_{n\in\N}\mu_n(\R)<\infty$ and $\mu$ be a finite measure on $\R$. Then the following statements are equivalent:
\begin{itemize}
\setlength\itemsep{0cm}
\item[$(i)$]  $\lim_{n\to\infty}\mu_n=  \mu$ weakly.
\item[$(ii)$] $(\mu_n)_{n\in\N}$ is tight, and there exists a separating family $\mathcal{S}\subset \mathcal{C}_b(\R)$ such that for every $f\in \mathcal{S}$
\begin{align*}
  \lim_{n\to\infty}  \int  f\, d \mu_n = \int f \, d\mu.
\end{align*}
\end{itemize}
\end{theorem}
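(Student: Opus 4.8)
The plan is to establish the two implications $(i)\Rightarrow(ii)$ and $(ii)\Rightarrow(i)$ separately, the second one being where the real work lies; throughout one may write $C:=\sup_{n\in\N}\mu_n(\R)<\infty$.

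For $(i)\Rightarrow(ii)$ I would argue directly. Weak convergence means $\int f\,d\mu_n\to\int f\,d\mu$ for \emph{every} $f\in\mathcal{C}_b(\R)$, hence in particular for the family $\mathcal{S}=\{x\mapsto\cos(tx),\ x\mapsto\sin(tx)\colon t\in\R\}$, which is separating since two finite measures on $\R$ with the same characteristic function coincide (uniqueness theorem for characteristic functions of finite measures). So the convergence clause of $(ii)$ holds with this $\mathcal{S}$. For tightness, fix $\varepsilon>0$, use finiteness of $\mu$ to pick $K$ with $\mu([-K,K]^{c})<\varepsilon/2$, choose $g\in\mathcal{C}_b(\R)$ with $0\le g\le 1$, $g\equiv 0$ on $[-K,K]$ and $g\equiv 1$ outside $[-K-1,K+1]$, so that $\mu_n([-K-1,K+1]^{c})\le\int g\,d\mu_n\to\int g\,d\mu\le\mu([-K,K]^{c})<\varepsilon/2$. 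Thus $\mu_n([-K-1,K+1]^{c})<\varepsilon$ for all $n$ large, and enlarging the window to absorb the finitely many remaining indices gives a single window working for all $n$, which is tightness.

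For $(ii)\Rightarrow(i)$ the strategy is a subsequence argument anchored by Prokhorov's theorem. Tightness together with the uniform bound $C$ makes $\{\mu_n\}$ relatively sequentially compact in the topology of weak convergence: apply Prokhorov to the normalized probability measures $\mu_n/\mu_n(\R)$ (discarding the trivial case $\mu_n\equiv 0$) together with boundedness of the masses $\mu_n(\R)\in[0,C]$, or invoke the finite-measure version of Prokhorov directly. So, given an arbitrary subsequence, one can extract a further subsequence $\mu_{n_j}\to\nu$ weakly for some measure $\nu$; tightness guarantees that no mass escapes to infinity, so $\nu$ is a genuine finite measure with $\nu(\R)=\lim_j\mu_{n_j}(\R)$. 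Now for each $f\in\mathcal{S}$ we obtain $\int f\,d\nu=\lim_j\int f\,d\mu_{n_j}=\int f\,d\mu$, the first equality by weak convergence (since $\mathcal{S}\subset\mathcal{C}_b(\R)$), the second by the hypothesis of $(ii)$. Since $\mathcal{S}$ is separating, $\nu=\mu$. Hence every subsequence of $(\mu_n)$ admits a further subsequence converging weakly to the \emph{same} limit $\mu$, and the standard subsequence principle yields $\mu_n\to\mu$ weakly, which is $(i)$.

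I expect the main obstacle to be precisely the invocation of Prokhorov's theorem in the second implication: one must ensure the relative-compactness statement is applied in a form valid for finite (not merely probability) measures, and that tightness is correctly used to rule out escape of mass to infinity, so that the subsequential weak limit $\nu$ is a finite measure with $\nu(\R)=\lim_j\mu_{n_j}(\R)$; without this, neither the mass bookkeeping nor the identification $\nu=\mu$ would be justified. Everything past "$\nu=\mu$" (the subsequence principle) and the entire first implication are routine.
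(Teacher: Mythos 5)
The paper does not actually prove this statement---it is imported verbatim from Klenke \cite[Theorem 13.34]{Klenke}---so there is no internal proof to compare against; your argument is correct and is essentially the standard textbook proof: $(i)\Rightarrow(ii)$ via the trigonometric separating family and a bump-function tightness estimate, and $(ii)\Rightarrow(i)$ via Prokhorov's theorem, identification of every subsequential weak limit with $\mu$ through the separating family, and the subsequence principle. The one point to handle with care is your normalization $\mu_n/\mu_n(\R)$: if the total masses tend to $0$ along a subsequence, the normalized measures need not be tight, so one should either treat that case separately (the subsequential limit is then the zero measure, which the separating family still identifies with $\mu$) or, as you yourself suggest, invoke the finite-measure version of Prokhorov's theorem directly.
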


\section{Law of large numbers for the empirical distribution}\label{sec:lln}
In this section we prove Theorem \ref{thm:first_main}. We first show a weaker version of Theorem \ref{thm:first_main}, where instead of almost sure convergence, we prove the $L^2$ convergence. We then use a decomposition of the branching process to lift the $L^2$ convergence to almost sure convergence.

\subsection{Weak law of large numbers}
The proof of the weak law of large numbers for the empirical distribution relies on the previously introduced many-to-two formula in Lemma \ref{lem:many-to-two}. Recall that by $(Y_n)_{n\in\N}$ we denote a random walk with transition probabilities given by $\mathsf{P}$, as defined in Section \ref{sec:prelim}. 
Recall the probability measure $\ProbHat$, where we added two independent spinal particles $(\xi^1_n)_{n\in\N}$ and $(\xi^2_n)_{n\in\N}$ to the random Galton-Watson tree $T$ with offspring distribution $\pi$. By $\tau$ we have denoted the first time the two spine particles split, and we call $\tau$ \textit{the splitting time}.
We introduce the average distance $H_n$ traveled by the particles up to time $n$ as
$$H_n=\frac{1}{\rho^n}\sum_{v\in T_n}|X_v|,$$
and we write
$$L_n=\frac{H_n}{n}=\frac{1}{n\rho^n}\sum_{x\in G}|x|m_n(x).$$
Note that it holds that
$$H_n=\int |x|\,dM_n(x).$$
The main goal in this section is to prove the following proposition, which claims that $L_n$ converges in $L^2$-distance to $\ell W$.
\begin{prop}\label{prop:weak-lln}
Under assumptions \ref{A1}-\ref{A5}, we have
\begin{align*}
\lim_{n\to\infty}\E[(L_n-\ell W_n)^2] = 0,
\end{align*}
where $\ell\geq 0$ is the drift of the underlying random walk, i.e.~the almost sure limit
\begin{align*}
\ell = \lim_{n\to\infty}\frac{|Y_n|}{n}.
\end{align*}
\end{prop}
Since $L^2$ convergence implies convergence in probability, and the variance of $W_n$ is uniformly bounded, we then obtain from Proposition \ref{prop:weak-lln} convergence in probability of $L_n$ to $\ell\cdot W$.
\begin{corollary}
The random variable $L_n$ converges in probability to the random variable $\ell\cdot W$, i.e.~for every $\varepsilon>0$, it holds 
$$\lim_{n\to\infty}\Prob(|L_n-\ell W|>\varepsilon) = 0.$$  
\end{corollary}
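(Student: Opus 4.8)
The plan is to deduce the corollary directly from Proposition~\ref{prop:weak-lln} together with the almost sure convergence $W_n \to W$ of the population martingale. First, recall that $L^2$ convergence implies convergence in probability: by Markov's inequality applied to the nonnegative random variable $(L_n - \ell W_n)^2$, for every $\varepsilon > 0$ one has
\[
\Prob\big(|L_n - \ell W_n| > \varepsilon\big) \le \varepsilon^{-2}\,\E\big[(L_n - \ell W_n)^2\big] \converges 0
\]
by Proposition~\ref{prop:weak-lln}, so that $L_n - \ell W_n \CIP 0$.

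Second, assumption~\ref{A1} guarantees $\sum_k k\log k\,\pi(k) < \infty$, so by Kesten--Stigum \cite{KestenStigum} we have $W_n \to W$ almost surely. Moreover, the computation in the proof of Lemma~\ref{lemma: decay of splitting time} shows that $\E[W_n^2] = 1 + \big((\theta-\rho^2)/\rho^2\big)\sum_{k=0}^{n-1}\rho^{-k}$ is uniformly bounded in $n$, so $(W_n)_{n\in\N}$ is an $L^2$-bounded martingale and the convergence also holds in $L^2$. In particular $\ell W_n \to \ell W$ almost surely, hence in probability.

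Finally, combine the two convergences via the decomposition $L_n - \ell W = (L_n - \ell W_n) + (\ell W_n - \ell W)$: for any $\varepsilon > 0$,
\[
\Prob\big(|L_n - \ell W| > \varepsilon\big) \le \Prob\big(|L_n - \ell W_n| > \varepsilon/2\big) + \Prob\big(|\ell W_n - \ell W| > \varepsilon/2\big),
\]
and both terms on the right tend to $0$ by the previous two steps. This gives $L_n \CIP \ell W$, as claimed.

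There is essentially no obstacle in this last step; all the substantive work is contained in Proposition~\ref{prop:weak-lln} (and, upstream of it, in the many-to-two estimates and Lemma~\ref{lemma: decay of splitting time}). The only point worth making explicit is the uniform bound on $\E[W_n^2]$, which simultaneously legitimates replacing $W_n$ by its limit $W$ and is already available from the recursion derived before Lemma~\ref{lemma: decay of splitting time}.
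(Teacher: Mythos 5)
Your argument is correct and matches the paper's (only sketched) justification: the paper likewise deduces the corollary from Proposition~\ref{prop:weak-lln} via Markov's inequality together with the convergence $W_n \to W$ (using the uniform bound on $\E[W_n^2]$), combined through exactly the decomposition $L_n - \ell W = (L_n - \ell W_n) + \ell(W_n - W)$. No gaps; your write-up simply makes explicit what the paper states in one sentence.
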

For the proof of Proposition \ref{prop:weak-lln}, we need to estimate the second moment of $L_n$, for which we will use the many-to-two formula. The first step in our proof is to analyse the decay of the splitting time of the spinal particles.
\begin{lemma}\label{lemma: decay of splitting time}
We have
$$\Big(\frac{\theta}{\rho}\Big)^2\sum_{k=0}^{\infty}\ProbHat(\tau=k) \Big(\frac{\theta}{\rho^2}\Big)^k=1+\frac{\theta-\rho^2}{\rho(\rho-1)} = \E[W^2].$$
\end{lemma}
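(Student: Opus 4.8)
The statement splits into two independent assertions: that the power series equals $\E[W^2]$, and that $\E[W^2]=1+\frac{\theta-\rho^2}{\rho(\rho-1)}$. I would dispatch the second one first, as it is the classical variance identity for the population martingale limit. Since $\E[Z_{n+1}\mid Z_n]=\rho Z_n$ and $\mathrm{Var}(Z_{n+1}\mid Z_n)=(\theta-\rho^2)Z_n$, the conditional variance formula gives $\mathrm{Var}(Z_{n+1})=\rho^{2}\mathrm{Var}(Z_n)+(\theta-\rho^2)\rho^{n}$, so $\mathrm{Var}(Z_n)=(\theta-\rho^2)\rho^{n-1}\tfrac{\rho^n-1}{\rho-1}$ and hence $\mathrm{Var}(W_n)=\mathrm{Var}(Z_n)/\rho^{2n}\to\frac{\theta-\rho^2}{\rho(\rho-1)}$. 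Assumption \ref{A1} makes $\sup_n\E[W_n^2]<\infty$, so the martingale $(W_n)$ is $L^2$-bounded and converges to $W$ in $L^2$; therefore $\E[W^2]=\lim_n\E[W_n^2]=1+\frac{\theta-\rho^2}{\rho(\rho-1)}$.

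For the first assertion the plan is to evaluate $\E[W_n^2]$ with the many-to-two lemma and let $n\to\infty$. Lemma~\ref{lem:many-to-two} with $f\equiv 1$ gives $\E[Z_n^2]=\ExpHat\big[\prod_{v\in\text{skel}(n)}w_{D_{p(v)}}\big]$. Splitting this expectation over the disjoint events $\{\tau=0\},\dots,\{\tau=n-1\},\{\tau\ge n\}$, on each of which the product is the deterministic quantity supplied by Lemma~\ref{lem: Lemma many-to-two product}, and dividing by $\rho^{2n}$, one arrives at an identity of the form
\[
\E[W_n^2]=\Big(\tfrac{\theta}{\rho^{2}}\Big)^{n}\ProbHat(\tau\ge n)+\Big(\tfrac{\theta}{\rho}\Big)^{2}\sum_{k=0}^{n-1}\Big(\tfrac{\theta}{\rho^{2}}\Big)^{k}\ProbHat(\tau=k).
\]
Now I would let $n\to\infty$: the left side tends to $\E[W^2]$ by the $L^2$-convergence established above; the partial sums on the right are nondecreasing and bounded above by $\E[W_n^2]\le\E[W^2]$, hence converge to $(\theta/\rho)^2\sum_{k\ge0}(\theta/\rho^2)^k\ProbHat(\tau=k)$; and consequently the boundary term $(\theta/\rho^2)^{n}\ProbHat(\tau\ge n)$ converges too, so it only remains to check its limit is $0$. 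This is exactly the ``decay of the splitting time'': as long as the two spines have not yet separated, the particle carrying both marks reproduces with the size-biased offspring law $\pi^{(2)}$ and assigns both marks to a single child with probability $\sum_{k\ge1}\pi^{(2)}(k)/k=\rho/\theta$, independently at each step, so $\ProbHat(\tau\ge n)=(\rho/\theta)^{n}$; since $\rho/\theta<\rho^{2}/\theta$ by \ref{A2}, the boundary term equals $\rho^{-n}\to0$. Passing to the limit in the displayed identity gives $(\theta/\rho)^2\sum_{k\ge0}(\theta/\rho^2)^k\ProbHat(\tau=k)=\E[W^2]$, which together with the first paragraph is the claim.

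The delicate points are the precise combinatorial accounting of $\text{skel}(n)$ on each event $\{\tau=k\}$ (which is what Lemma~\ref{lem: Lemma many-to-two product} is there to handle) and the control of the $\{\tau\ge n\}$ contribution --- i.e., establishing the geometric decay $\ProbHat(\tau\ge n)=(\rho/\theta)^{n}$ from the spine construction and noting that it beats the exponential prefactor $(\theta/\rho^{2})^{n}$. Everything else (the variance recursion, the geometric series, the $L^2$-boundedness of $W_n$) is routine.
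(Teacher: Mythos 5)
Your proposal is correct and follows essentially the same route as the paper: the classical variance recursion for the Galton--Watson process gives $\lim_{n\to\infty}\E[W_n^2]=1+\frac{\theta-\rho^2}{\rho(\rho-1)}$, identified with $\E[W^2]$ through $L^2$-boundedness of the martingale, and the many-to-two lemma with $f\equiv 1$ combined with Lemma \ref{lem: Lemma many-to-two product} yields exactly the paper's decomposition of $\E[W_n^2]$ into the sum over $\{\tau=k\}$, $k<n$, plus the boundary term $(\theta/\rho^2)^n\ProbHat(\tau\geq n)$. The only difference is in how that boundary term is killed: you compute $\ProbHat(\tau\geq n)=(\rho/\theta)^n$ exactly from the spine construction (valid, since each mark independently picks a uniform child of the doubly-marked particle, giving per-step cohabitation probability $\sum_k \pi^{(2)}(k)/k=\rho/\theta$, so the term is $\rho^{-n}\to 0$), whereas the paper avoids invoking that detail of the construction and instead bounds the term by the tail of the already-convergent series $\sum_k\ProbHat(\tau=k)(\theta/\rho^2)^k$, using $\theta\geq\rho^2$.
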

\begin{proof}
 From the equation
\begin{align*}
\mathbb{E}\big[Z_{n+1}^2\big\vert Z_n\big]&=\E\bigg[\bigg(\sum_{k=1}^{Z_n} Y_{n,k}\bigg)^2 \Big|Z_n\bigg]= \sum_{k=1}^{Z_n}\sum_{j=1}^{Z_n}\E\big[Y_{n,k}Y_{n,j}\big]\\
&=\sum_{\substack{j,k=1\\j\neq k}}^{Z_n} \E\big[Y_{n,k}\big]\E\big[Y_{n,j}\big] + \sum_{l=1}^{Z_n}\E\big[Y_{n,l}^2\big]
\\&=(Z_n^2-Z_n)\rho^2+Z_n\theta,
\end{align*}    
since $W_n=Z_n/\rho^n$, it follows that
    $$\mathbb{E}\big[W_{n+1}^2\big]=\mathbb{E}\big[W_n^2\big]+\frac{1}{\rho^{n}}\Big(\frac{\theta-\rho^2}{\rho^2}\Big).$$
This recursive equation can be solved and yields the closed-term expression
    $$\mathbb{E}\big[W_n^2\big]=1+\Big(\frac{\theta-\rho^2}{\rho^2}\Big)\sum_{k=0}^{n-1}\frac{1}{\rho^{k}}.$$
The right-hand side above converges to 
$$1 + \Big(\frac{\theta-\rho^2}{\rho^2}\Big)\frac{\rho}{\rho-1} = 1 + \frac{\theta-\rho^2}{\rho(\rho-1)}.$$    
We can now calculate the second moment of the population martingale $W_n$ by using the many-to-two formula (Lemma \ref{lem:many-to-two}) together with Lemma \ref{lem: Lemma many-to-two product}, and we obtain
    \begin{align*}
        \mathbb{E}\big[W_n^2\big]&=
        \frac{1}{\rho^{2n}}\mathbb{E}\big[Z_n^2\big]=
        \frac{1}{\rho^{2n}}\mathbb{E}\Big[\sum_{v,w\in T_n}1\Big]\\
        &=\frac{1}{\rho^{2n}}\ExpHat\Big[\prod_{v\in\text{skel}(n)}w_{D_{p(v)}}\Big]\\
        &=\ProbHat(\tau\geq n)\Big(\frac{\theta}{\rho^2}\Big)^n
        + \Big(\frac{\theta}{\rho}\Big)^2\sum_{k=0}^{n-1}\ProbHat(\tau=k)\Big(\frac{\theta}{\rho^2}\Big)^k.
    \end{align*}
    Since $\mathbb{E}\big[W_n^2\big]$ converges as $n\rightarrow\infty$, so must the right-hand side of the equation above. Thus it holds
    $$\sum_{k=0}^{\infty}\ProbHat(\tau=k) \Big(\frac{\theta}{\rho^2}\Big)^k<\infty.$$
Since the tail end of a convergent series converges to $0$, we have 
    $$\ProbHat(\tau\geq n)\Big(\frac{\theta}{\rho^2}\Big)^n\leq\sum_{k=n}^{\infty}\ProbHat(\tau=k)\Big(\frac{\theta}{\rho^2}\Big)^k\xrightarrow[]{n\rightarrow\infty}0,$$
from which the claim follows.
\end{proof}

In the proof of Proposition \ref{prop:weak-lln} we also need an auxiliary result concerning the expected maximal displacement of the underlying random walk $(Y_n)_{n\in \N}$, which we prove next.
\begin{lemma}\label{lem: WLLN helping lemma 1}
Under assumptions \ref{A3}-\ref{A4}, it holds
$$\lim_{n\to\infty} \max_{k\leq \sqrt{n}} \E\left[\frac{|Y_{n-k}|}{n}\right] = \ell.$$
\end{lemma}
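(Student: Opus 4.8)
The idea is to deduce the statement from the single fact, already recorded in the introduction (Kingman's subadditive ergodic theorem applied under \ref{A3}--\ref{A4}), that $\E[|Y_m|]/m \to \ell$, together with the elementary observation that for $k\le\sqrt n$ the index $n-k$ lies in the window $[\,n-\sqrt n,\,n\,]$, all of whose elements are both large and comparable to $n$. Write $a_m:=\E[|Y_m|]$. First I would note that $a_m$ is finite for every $m$: by \ref{A3} the increment $\eta_0=d(Y_0,Y_1)$ has a finite exponential moment, hence a finite mean, and $|Y_m|\le\eta_0+\dots+\eta_{m-1}$ (see \eqref{bound the RW}) gives $a_m\le m\,\E[\eta_0]<\infty$, so all the expectations in the statement make sense. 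We then have $a_m/m\to\ell$.

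For the lower bound, it suffices to keep a single term: taking $k=1$ (which is $\le\sqrt n$ for all $n\ge 1$) gives $\max_{k\le\sqrt n} a_{n-k}/n \ge a_{n-1}/n = \tfrac{a_{n-1}}{n-1}\cdot\tfrac{n-1}{n}\to\ell$, hence $\liminf_{n\to\infty}\max_{k\le\sqrt n} a_{n-k}/n\ge\ell$.

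For the upper bound, fix $\varepsilon>0$ and choose $M$ with $a_m/m\le\ell+\varepsilon$ for all $m\ge M$. Since $n-\sqrt n\to\infty$ (and is eventually increasing in $n$), there is $N$ such that $n-\sqrt n\ge M$ for all $n\ge N$. Then for $n\ge N$ and any integer $k\le\sqrt n$ we have $n-k\ge n-\sqrt n\ge M$, so $a_{n-k}\le(n-k)(\ell+\varepsilon)\le n(\ell+\varepsilon)$, where the last step uses $\ell+\varepsilon\ge 0$ and $n-k\le n$. Dividing by $n$ and maximizing over $k\le\sqrt n$ yields $\max_{k\le\sqrt n} a_{n-k}/n\le\ell+\varepsilon$ for all $n\ge N$, hence $\limsup_{n\to\infty}\max_{k\le\sqrt n} a_{n-k}/n\le\ell+\varepsilon$. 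Letting $\varepsilon\downarrow 0$ and combining with the lower bound gives the claim.

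\textbf{Main obstacle.} There is essentially no serious obstacle: the lemma is a uniform-over-a-sliding-window version of the Cesàro-type convergence $a_m/m\to\ell$, and the only point that needs a line of care is that the window $\{n-k:k\le\sqrt n\}$ stays inside the region $\{m\ge M\}$ on which $a_m/m$ is already within $\varepsilon$ of $\ell$ --- which is exactly what $n-\sqrt n\to\infty$ guarantees. If one wishes to avoid quoting Kingman for the convergence $a_m/m\to\ell$, one can instead establish it directly by Fekete's lemma, using that $a_m$ is subadditive: by the triangle inequality $|Y_{m+j}|\le|Y_m|+d(Y_m,Y_{m+j})$, and conditionally on $Y_m$ the term $d(Y_m,Y_{m+j})$ has, by \ref{A4}, the same law as $|Y_j|$, so $a_{m+j}\le a_m+a_j$.
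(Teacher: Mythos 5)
Your proposal is correct and follows essentially the same route as the paper: the only input is the Kingman-based convergence $\E[|Y_m|]/m\to\ell$, combined with the elementary observation that every index $n-k$ with $k\le\sqrt n$ is both large and asymptotically comparable to $n$. The paper phrases the uniformity step via a maximizing index $\beta_n\le\sqrt n$ and the factorization $\E[|Y_{n-\beta_n}|]/n=\frac{n-\beta_n}{n}\,\E[|Y_{n-\beta_n}|]/(n-\beta_n)$, while you use an $\varepsilon$--$N$ argument plus the $k=1$ term for the lower bound, which is the same idea in different clothing.
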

\begin{proof}
First note that $\E[|Y_n|/n]$ converges to $\ell$ due to Kingman's subadditive ergodic theorem. We set $\beta_n$ to be the natural number smaller than $\sqrt{n}$ such that 
$$\max_{k\leq \sqrt{n}} \E\left[\frac{|Y_{n-k}|}{n}\right] =  \E\left[\frac{|Y_{n-{\beta_n}}|}{n}\right].$$
Since $\lim_{n\to\infty}n-\beta_n=\infty$ and 
$\lim_{n\to\infty}\frac{\beta_n}{n} = 0$, we have 
$\lim_{n\to\infty}\frac{n-\beta_n}{n} = 1$,
which implies
$$
\E\left[\frac{|Y_{n-{\beta_n}}|}{n}\right] = \frac{n-\beta_n}{n}\E\left[\frac{|Y_{n-{\beta_n}}|}{n-\beta_n}\right]\xrightarrow[]{n\rightarrow\infty} \ell.
$$
\end{proof}
We are now in position to prove Proposition \ref{prop:weak-lln}.
\begin{proof}[Proof of Proposition \ref{prop:weak-lln}]
We show that $L_n$ stays close to $\ell\cdot W_n$ in mean-square distance. We have
$$\mathbb{E}[(L_n-\ell W_n)^2]=\E[L_n^2]+\ell^2\E[W_n^2]-2\ell\E[L_nW_n].$$
Using Lemma \ref{lem:many-to-two} and Lemma \ref{lem: Lemma many-to-two product}, we can estimate the terms on the right-hand side above individually. We start with the mixed term
\begin{align*}
    \E[L_nW_n]
    &=\E\Big[\Big(\frac{1}{n\rho^n}\sum_{v \in T_n}|X_v|\Big)\Big(\frac{1}{\rho^n}\sum_{w \in T_n}1\Big)\Big]
    \\
    &=\frac{1}{n\rho^{2n}}\E\Big[\sum_{v,w\in T_n}|X_v|\Big]\\
    &=\frac{1}{n\rho^{2n}}\ExpHat\Big[|\zeta_n^1|\cdot\prod_{v\in\text{skel}(n)}w_{D_{p(v)}}\Big]\\
    &= \Big(\frac{\theta}{\rho}\Big)^2\sum_{k=0}^{n-1}\ProbHat(\tau=k)\Big(\frac{\theta}{\rho^2}\Big)^k\ExpHat\left[\frac{|\zeta_n^1|}{n}\Big| \tau=k\right]+\ProbHat(\tau\geq n)\Big(\frac{\theta}{\rho^2}\Big)^n\ExpHat\left[\frac{|\zeta_n^1|}{n}\Big|\tau\geq n\right]\\
    &=\E\left[\frac{|Y_n|}{n}\right]\left(\Big(\frac{\theta}{\rho}\Big)^2 \sum_{k=0}^{n-1}\ProbHat(\tau=k)\Big(\frac{\theta}{\rho^2}\Big)^k+\ProbHat(\tau\geq n)\Big(\frac{\theta}{\rho^2}\Big)^n\right)\\
    &\xrightarrow[]{n\rightarrow\infty} \ell\cdot\E[W^2].
\end{align*}
Next, we consider the second moment of $L_n$ using once again the many-to-two formula together with Lemma \ref{lem: Lemma many-to-two product}:
\begin{equation}
\begin{aligned}\label{sum:1}
    \E[L_n^2]&=\E\Big[\Big(\frac{1}{n\rho^n}\sum_{v \in T_n}|X_v|\Big)\Big(\frac{1}{\rho^n}\sum_{w \in T_n}|X_w|\Big)\Big]\\
   &=\frac{1}{n^2\rho^{2n}}\E\Big[\sum_{v,w\in T_n}|X_v|\cdot|X_w|\Big]\\
    &=\frac{1}{n^2\rho^{2n}}\ExpHat\Big[|\zeta_n^1|\cdot|\zeta_n^2|\cdot\prod_{v\in\text{skel}(n)}w_{D_{p(v)}}\Big]\\
    &=\Big(\frac{\theta}{\rho}\Big)^2 \sum_{k=0}^{n-1}\ProbHat(\tau=k)\Big(\frac{\theta}{\rho^2}\Big)^k\ExpHat\left[\frac{|\zeta_n^1|}{n}\cdot\frac{|\zeta_n^2|}{n}\Big| \tau=k\right]\\&+\ProbHat(\tau\geq n)\Big(\frac{\theta}{\rho^2}\Big)^n\ExpHat\left[\frac{|\zeta_n^1|}{n}\cdot\frac{|\zeta_n^2|}{n}\Big|\tau\geq n\right].
\end{aligned}
\end{equation}
We will now consider the two terms above separately. Note that if the two spinal particles split after time $n$, then $\zeta_n^1=\zeta_n^2$, from which it follows that they are the same until time $n$. Therefore we have
\begin{align*}
    \lim_{n\rightarrow\infty}\ProbHat(\tau\geq n)\Big(\frac{\theta}{\rho^2}\Big)^n  \ExpHat\left[\frac{|\zeta_n^1|}{n}\cdot\frac{|\zeta_n^2|}{n}\Big|\tau\geq n\right]
    =\lim_{n\rightarrow\infty}\ProbHat(\tau\geq n)\Big(\frac{\theta}{\rho^2}\Big)^n\E\Big[\Big(\frac{|Y_n|}{n}\Big)^2\Big]=0.
\end{align*}
Note that the sequence of expectations above is bounded in view of the exponential moment condition \ref{A3}. Next we split the first sum on the right-hand side of Equation \eqref{sum:1} at index $\sqrt{n}$ and use the fact that if the two spine particles split at time $k<n$, then the random variables $d(\zeta_k^1,\zeta_n^1)$ and $d(\zeta_k^2,\zeta_n^2)$ are independent and distributed according to $|Y_{n-k}|$. Due to Equation \eqref{bound the RW}, we can bound $|\zeta_n^1|$ and $|\zeta_n^2|$ by random walks $\eta_0^{(1)}+\eta_1^{(1)}+\cdots+\eta_{n-1}^{(1)}$ and $\eta_0^{(2)}+\eta_1^{(2)}+\cdots+\eta_{n-1}^{(2)}$ respectively. The increments are i.i.d.~and distributed according to $|Y_1|$. For $k\leq\sqrt{n}$, we have
\begin{align*}
\ExpHat\left[\frac{|\zeta_n^1|}{n}\cdot\frac{|\zeta_n^2|}{n}\Big| \tau=k\right] &\leq \ExpHat\Big[\frac{|\zeta_k^1|+d(\zeta_k^1,\zeta_n^1)}{n}\cdot \frac{|\zeta_k^2|+d(\zeta_k^2,\zeta_n^2)}{n}\Big| \tau=k\Big]\\
&\leq \ExpHat\Big[\frac{|\zeta_k^1|}{n}\cdot \frac{|\zeta_k^2|}{n}\Big| \tau=k\Big] + \ExpHat\Big[\frac{|\zeta_k^1|}{n}\cdot \frac{d(\zeta_k^2,\zeta_n^2)}{n}\Big| \tau=k\Big] \\
&+\ExpHat\Big[\frac{d(\zeta_k^1,\zeta_n^1)}{n}\cdot \frac{|\zeta_k^2|}{n}\Big| \tau=k\Big] + \ExpHat\Big[\frac{d(\zeta_k^{1},\zeta_n^1)}{n}\cdot \frac{d(\zeta_k^2,\zeta_n^2)}{n}\Big| \tau=k\Big].
\end{align*}
Since on the event $\{\tau = k\}$ the spine particles are the same, using Jensen's inequality we obtain
\begin{align*}
\ExpHat\Big[\frac{|\zeta_k^1|}{n}\cdot \frac{|\zeta_k^2|}{n}\Big| \tau=k\Big] \leq \ExpHat\Big[\Big(\frac{\eta_0^{(1)}+\eta_1^{(1)}+\cdots+\eta_{k-1}^{(1)}}{n}\Big)^2 \Big| \tau=k\Big]\leq \frac{k^2}{n^2}\E\big[|Y_1|^2\big].
\end{align*}
The second and third summands can be treated the same way. Note that conditioned on the event $\{\tau = k\}$ the random variables $\zeta_k^1$ and $d(\zeta_k^2,\zeta_n^2)$ are independent and thus we get
\begin{align*}
\ExpHat\Big[\frac{|\zeta_k^1|}{n}\cdot \frac{d(\zeta_k^2,\zeta_n^2)}{n}\Big| \tau=k\Big]&= \ExpHat\left[\frac{|\zeta_k^1|}{n}\Big| \tau=k\right]\ExpHat\left[\frac{d(\zeta_k^2,\zeta_n^2)}{n}\Big| \tau=k\right] \\
&\leq \ExpHat\left[\frac{\sum_{i=0}^{k-1}\eta_i^{(1)}}{n} \Big| \tau=k\right]\ExpHat\left[\frac{\sum_{i=k}^{n-1}\eta_i^{(2)}}{n} \Big| \tau=k\right]\\
&\leq \frac{k(n-k)}{n^2}\E[|Y_1|]^2.
\end{align*}
For the last summand we have  $\ExpHat\Big[\frac{d(\zeta_k^{1},\zeta_n^1)}{n}\cdot \frac{d(\zeta_k^2,\zeta_n^2)}{n}\Big| \tau=k\Big]=\frac{\E[|Y_{n-k}|]^2}{n^2}$. 
Together with Lemma \ref{lem: WLLN helping lemma 1}, we obtain the following upper bound for the  first sum (up to index $\sqrt{n}$) on the right-hand side of Equation \eqref{sum:1}
\begin{align*}
    &\sum_{k=0}^{\sqrt{n}}\ProbHat(\tau=k)\Big(\frac{\theta}{\rho^2}\Big)^k\ExpHat\left[\frac{|\zeta_n^1|}{n}\cdot\frac{|\zeta_n^2|}{n}\Big| \tau=k\right]\\
\leq& \bigg(\frac{n\E\big[|Y_1|^2\big]}{n^2} + \frac{2n^{\frac{3}{2}}\E\big[|Y_1|]^2}{n^2} 
 + \max_{k\leq\sqrt{n}}\mathbb{E}\left[\frac{|Y_{n-k}|}{n}\right]^2\bigg) \sum_{k=0}^{\sqrt{n}}\ProbHat(\tau=k)\Big(\frac{\theta}{\rho^2}\Big)^k\\ &\xrightarrow[]{n\rightarrow\infty} \ell^2 \sum_{k=0}^{\infty}\ProbHat(\tau=k)\Big(\frac{\theta}{\rho^2}\Big)^k.
\end{align*}
Moreover due to Lemma \ref{lemma: decay of splitting time} the remaining sum clearly converges to $0$ as $n\to\infty$:
\begin{align*}
    \sum_{k=\sqrt{n}+1}^{n}\ProbHat(\tau=k)\Big(\frac{\theta}{\rho^2}\Big)^k\ExpHat\left[\frac{|\zeta_n^1|}{n}\cdot\frac{|\zeta_n^2|}{n}\Big| \tau=k\right]\leq \E\big[|Y_1|^2\big]\sum_{k=\sqrt{n}+1}^{\infty}\ProbHat(\tau=k)\Big(\frac{\theta}{\rho^2}\Big)^k\xrightarrow[]{n\rightarrow\infty} 0.
\end{align*}
This implies that the limit of the second moment of $L_n^2$ can be bounded from above by
\begin{align*}
    \limsup_{n\to\infty}\E\big[L_n^2\big] \leq \ell^2\Big(\frac{\theta}{\rho}\Big)^2 \sum_{k=0}^{\infty}\ProbHat(\tau=k)\Big(\frac{\theta}{\rho^2}\Big)^k=\ell^2\E[W^2],
\end{align*}
where in the  second equality we have used Lemma \ref{lemma: decay of splitting time}. Putting the estimations above together, we thus obtain
$$0\leq\limsup_{n\rightarrow\infty}\E[(L_n-\ell W_n)^2]\leq \ell^2\E[W^2]-2\ell^2\E[W^2]+\ell^2\E[W^2]=0$$
which completes the proof.
\end{proof}

\subsection{Maximal displacement}\label{subsec: Maximal Displacement}
In order to extend the convergence of the mean displacement of the empirical distribution in $L^2$ from Proposition \ref{prop:weak-lln} to almost sure convergence, we first estimate the maximal displacement among all branching particles at time $n$. We define
$$\maxdis_n=\max_{v\in T_n}|X_v|.$$
While on the additive group of integers and of reals, convergence results for the maximal displacement of particles in a branching process are well understood, see \cite{FirstBirthProbKing,FirstBirthProbBigg,FirstBirthProbHamm}), for our purposes it suffices to show that the maximal displacement can grow at most linearly in $n$. Let us define
\begin{align*}
 \Theta_n(t)=\E\Big[\sum_{v\in T_n}\e^{t|X_v|}\Big]\quad \text{and}\quad \vartheta_n(t)=\E\big[\e^{t|Y_n|}\big].
\end{align*}
From Lemma \ref{lem: many-to-one} follows  that
$$\Theta_n(t)=\rho^n\vartheta_n(t)\leq(\rho\vartheta_1(t))^n.$$
Recall the assumption \ref{A3} on the transition kernel that there exists a real number $t>0$ such that $\vartheta_1(t)<\infty$.
\begin{prop}\label{prop:maxdis}
Under assumptions \ref{A1}-\ref{A5}, there exists $a>0$ such that
$$\limsup_{n\rightarrow\infty}\frac{\maxdis_n}{n}\leq a,\quad \text{almost surely}.$$
\end{prop}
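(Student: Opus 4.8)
The strategy is a first-moment (union bound / Markov inequality) argument combined with Borel–Cantelli. The key estimate is the bound $\Theta_n(t) = \rho^n \vartheta_n(t) \le (\rho\,\vartheta_1(t))^n$ already recorded before the statement, valid for the fixed $t>0$ from assumption \ref{A3}. Write $c := \rho\,\vartheta_1(t) \in (1,\infty)$ and fix any constant $a$ with $a > \frac{\log c}{t}$. For this $a$, Markov's inequality gives
\[
\Prob(\maxdis_n > an) = \Prob\Big(\max_{v\in T_n}\e^{t|X_v|} > \e^{tan}\Big) \le \Prob\Big(\sum_{v\in T_n}\e^{t|X_v|} > \e^{tan}\Big) \le \e^{-tan}\,\Theta_n(t) \le \big(\e^{-ta}c\big)^n.
\]
Since $a > \frac{\log c}{t}$ we have $\e^{-ta}c < 1$, so $\sum_{n} \Prob(\maxdis_n > an) < \infty$, and Borel–Cantelli yields that almost surely $\maxdis_n \le an$ for all but finitely many $n$. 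Hence $\limsup_{n\to\infty} \maxdis_n / n \le a$ almost surely, which is exactly the claim.

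**On the role of the assumptions and the main point to be careful about.** Assumptions \ref{A1}, \ref{A2}, \ref{A5} are not really needed for this particular bound — the proof only uses \ref{A3} (to get a finite $t>0$ with $\vartheta_1(t)<\infty$, hence a finite $c$) and, implicitly through Lemma \ref{lem: many-to-one}, that $\rho = \E[\pi] < \infty$; \ref{A4} is used only insofar as it was already invoked to make sense of $|Y_n|$ and the increment bound \eqref{bound the RW}, though strictly for this proposition one just needs the many-to-one identity. One small subtlety worth spelling out: the many-to-one lemma is stated for measurable $f\colon G\to\R$, and here we apply it with $f(x) = \e^{t|x|}$, which is nonnegative and measurable, so $\Theta_n(t) = \rho^n \E[\e^{t|Y_n|}] = \rho^n\vartheta_n(t)$ holds (with both sides possibly $+\infty$ a priori, but finite by \ref{A3} and submultiplicativity $\vartheta_n(t) \le \vartheta_1(t)^n$, which itself follows from the increment decomposition \eqref{bound the RW} and independence of the $\eta_k$).

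**Where the work lies.** There is essentially no hard step here — the proposition is a soft consequence of the exponential-moment assumption. The only thing that requires a line of justification is the submultiplicative bound $\vartheta_n(t)\le \vartheta_1(t)^n$: by \eqref{bound the RW} we have $|Y_n|\le \eta_0+\cdots+\eta_{n-1}$ with the $\eta_k$ i.i.d.\ distributed as $|Y_1|$ (in law), so $\vartheta_n(t) = \E[\e^{t|Y_n|}] \le \E[\e^{t(\eta_0+\cdots+\eta_{n-1})}] = \vartheta_1(t)^n$, and $\vartheta_1(t) = \sum_{x\in G}\e^{t|x|}\mathsf p(o,x) < \infty$ by \ref{A3}. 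After that, the Markov-plus-Borel–Cantelli routine is immediate, and one can even extract the explicit value $a = \frac{\log(\rho\,\vartheta_1(t))}{t}$ (or any larger constant) for the lim sup bound. If one wanted the optimal $a$ one would optimize over all admissible $t$, i.e.\ take $a = \inf_{t>0:\,\vartheta_1(t)<\infty} \frac{\log(\rho\,\vartheta_1(t))}{t}$, but since the statement only asks for existence of some $a>0$ this refinement is unnecessary.
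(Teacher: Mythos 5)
Your proof is correct and follows essentially the same route as the paper: a Chernoff--Markov bound using $\Theta_n(t)\le(\rho\vartheta_1(t))^n$ (with the submultiplicativity of $\vartheta_n$ coming from the i.i.d.\ increment bound \eqref{bound the RW}), followed by choosing $a$ so that $\rho\vartheta_1(t)\e^{-ta}<1$ and applying Borel--Cantelli. The paper phrases the first step as $\Theta_n(t)\ge\E[\e^{t\maxdis_n}\mathbbm{1}_{\{\maxdis_n\ge na\}}]\ge\e^{tna}\Prob(\maxdis_n\ge na)$, which is the same estimate you obtain by bounding the maximum by the sum.
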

\begin{proof}
Let us choose some $a>0$, whose value will be specified later and $t>0$ such that $\vartheta_1(t)<\infty$. Then we have
\begin{align*}
\Theta_n(t)=\E\Big[\sum_{v\in T_n}\e^{t|X_v|}\Big]
\geq \E\big[\e^{t\maxdis_n}\big]
\geq \E\big[\e^{t\maxdis_n}\mathbbm{1}_{\{\maxdis_n\geq na\}}\big]
\geq \e^{tna}\Prob(\maxdis_n\geq na),
\end{align*}
which implies
$$\Prob(\maxdis_n\geq na)\leq \Big(\frac{\rho\vartheta_1(t)}{\e^{ta}}\Big)^n.$$
If we now choose $a>0$ such that
$$\Big(\frac{\rho\vartheta_1(t)}{\e^{ta}}\Big)<1,$$
then the claim follows from an application of the Borel-Cantelli lemma.
\end{proof}

\subsection{Strong law of large numbers}

We are now finally in position to prove Theorem \ref{thm:first_main}. The proof relies on the following lemma, which allows us to decompose $L_n$ into a sum of i.i.d. random variables, up to a small error term, that we can control. Recall the definition of the mean displacement
$$L_n = \frac{1}{n\rho^n}\sum_{v\in T_n} |X_v|.$$
For $k<n$ and $w\in T_k$, we write $T_n^{(w)}$ for all vertices at level $n$ which have $w$ as ancestor:
$$T_n^{(w)} = \lbrace v\in T_n\,|\, p^{n-k}(v) = w\rbrace.$$

\begin{lemma}\label{lem:decomp}
Under assumptions \ref{A1}-\ref{A5}, for every  $n\in\N$ and $k\in\{1,...,n-1\}$ there exist i.i.d. random variables $\big(L_{n,k}^{(w)}\big)_{w\in T_k}$ with the same distribution as $L_{n-k}$, i.e.
    $$L_{n,k}^{(w)}\sim L_{n-k},$$
    for all $w\in T_k$, such that
    $$\Big|L_n-\frac{n-k}{n\rho^k}\sum_{w\in T_k}L_{n,k}^{(w)}\Big|\leq\frac{1}{n\rho^n}\sum_{w\in T_k}|X_w|\cdot|T_n^{(w)}|.$$
\end{lemma}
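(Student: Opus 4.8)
The plan is to split the sum defining $L_n$ according to which vertex at level $k$ is the ancestor, and then to recognize the contribution of each such subtree as an independent copy of a branching random walk of depth $n-k$. Since $\{T_n^{(w)}:w\in T_k\}$ partitions $T_n$, we have
$$L_n=\frac{1}{n\rho^n}\sum_{w\in T_k}\sum_{v\in T_n^{(w)}}|X_v|.$$
For $w\in T_k$ I would then define
$$L_{n,k}^{(w)}:=\frac{1}{(n-k)\rho^{n-k}}\sum_{v\in T_n^{(w)}}d(X_w,X_v),$$
the mean displacement inside the subtree rooted at $w$ with $w$ itself taken as the new origin. With this choice the algebraic identity
$$\frac{n-k}{n\rho^k}\sum_{w\in T_k}L_{n,k}^{(w)}=\frac{1}{n\rho^n}\sum_{w\in T_k}\sum_{v\in T_n^{(w)}}d(X_w,X_v)$$
holds, so that $L_n-\frac{n-k}{n\rho^k}\sum_{w\in T_k}L_{n,k}^{(w)}=\frac{1}{n\rho^n}\sum_{w\in T_k}\sum_{v\in T_n^{(w)}}\big(|X_v|-d(X_w,X_v)\big)$.

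To obtain the stated bound I would apply the triangle inequality to $o$, $X_w$ and $X_v$: from $|X_v|\leq|X_w|+d(X_w,X_v)$ and $d(X_w,X_v)\leq|X_w|+|X_v|$ we get $\big||X_v|-d(X_w,X_v)\big|\leq|X_w|$ for every descendant $v$ of $w$. Since $|X_w|$ does not depend on $v$ and the inner sum ranges over $|T_n^{(w)}|$ vertices, summing gives
$$\Big|L_n-\frac{n-k}{n\rho^k}\sum_{w\in T_k}L_{n,k}^{(w)}\Big|\leq\frac{1}{n\rho^n}\sum_{w\in T_k}|X_w|\cdot|T_n^{(w)}|,$$
which is precisely the inequality claimed.

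It then remains to verify that $\big(L_{n,k}^{(w)}\big)_{w\in T_k}$ is i.i.d.\ with $L_{n,k}^{(w)}\sim L_{n-k}$. For this I would use the recursive structure of the Galton--Watson tree together with the Markov property of the tree-indexed walk: conditionally on the tree up to level $k$ and on the positions $(X_w)_{w\in T_k}$, the subtrees hanging off the distinct vertices $w\in T_k$ (and the motion of the particles they carry) are independent, and the one rooted at a fixed $w$ is a branching random walk of depth $n-k$ started from $X_w$. By vertex-transitivity of $G$ one may apply a graph automorphism carrying $X_w$ to $o$; being an isometry it preserves all distances $d(X_w,\cdot)$ appearing in $L_{n,k}^{(w)}$, so the conditional law of $L_{n,k}^{(w)}$ equals that of $L_{n-k}$ and, in particular, does not depend on $w$ or on the value of $X_w$. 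As this conditional law is deterministic, the variables $L_{n,k}^{(w)}$ are i.i.d.\ with law $L_{n-k}$ in the full probability space as well, completing the proof.

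I expect the displacement estimate to be routine (two applications of the triangle inequality) and the genuine content to lie in the last paragraph: carefully phrasing the conditional independence of the subtrees via the branching Markov property, and using transitivity (together with the invariance of $\mathsf{P}$ under graph automorphisms) to identify each subtree's mean displacement with a fresh copy of $L_{n-k}$.
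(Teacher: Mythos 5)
Your decomposition of $L_n$ over the ancestors $w\in T_k$, the definition $L_{n,k}^{(w)}=\frac{1}{(n-k)\rho^{n-k}}\sum_{v\in T_n^{(w)}}d(X_w,X_v)$, and the displacement estimate are exactly the paper's argument: the paper derives the upper and lower bounds separately from the triangle and reverse triangle inequalities, which is the same computation as your two-sided bound $\big||X_v|-d(X_w,X_v)\big|\le|X_w|$. The setup of the conditional independence of the subtrees via the branching Markov property at level $k$ also matches the paper.

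The gap is in how you identify the conditional law of $L_{n,k}^{(w)}$ with that of $L_{n-k}$. You apply a graph automorphism carrying $X_w$ to $o$ and invoke ``the invariance of $\mathsf{P}$ under graph automorphisms''. That invariance is not among the assumptions \ref{A1}--\ref{A5}: vertex-transitivity is a property of the graph alone, and an automorphism $\Psi$ pushes the subtree walk forward to a walk with kernel $\mathsf{p}(\Psi^{-1}\cdot\,,\Psi^{-1}\cdot)$, which need not coincide with $\mathsf{p}$. Even in the paper's own example, an anisotropic walk on $\mathbb{T}_d$, the kernel is not preserved by general automorphisms of the tree (only by the group translations), and the lemma's hypotheses do not guarantee the existence of any transitive family of $\mathsf{P}$-preserving automorphisms. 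The hypothesis designed precisely for this step is \ref{A4}: the law of the process of distances from the starting point does not depend on the starting point, and the paper concludes the i.i.d.\ property and the identity in law $L_{n,k}^{(w)}\sim L_{n-k}$ from the Markov property of the branching process together with \ref{A4}. So your argument as written proves the lemma only under an additional, unassumed homogeneity of $\mathsf{P}$; to prove it under the stated assumptions, replace the automorphism step by an appeal to \ref{A4}.
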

\begin{proof}
   Fix some $k<n$. We rewrite $L_n$ as follows:
    $$L_n=\frac{1}{n\rho^n}\sum_{v\in T_n}|X_v|=\frac{1}{n\rho^n}\sum_{w\in T_k}\sum_{v\in T_n^{(w)}}|X_v|,$$
    which together with the triangle inequality implies that for all $w\in T_k$ and $v\in T_n^{(w)}$ 
$$|X_v|\leq |X_w|+d(X_w,X_v),$$
and this yields
$$L_n\leq \frac{1}{n\rho^n}\sum_{w\in T_k}\sum_{v\in T_n^{(w)}}\Big(|X_w|+d(X_w,X_v)\Big).$$
The Markov property of the branching process together with assumption \ref{A4} imply that the random variables 
    $$L_{n,k}^{(w)}:=\frac{1}{(n-k)\rho^{n-k}}\sum_{v\in T_n^{(w)}}d(X_w,X_v)$$
 are i.i.d.~and have the same distribution as $L_{n-k}.$ Thus
    $$L_n\leq \frac{n-k}{n\rho^k}\sum_{w\in T_k}L_{n,k}^{(w)} +\frac{1}{n\rho^n}\sum_{w\in T_k}|X_w|\cdot|T_n^{(w)}|.$$
    By the reverse triangle inequality we also obtain
    $$L_n\geq \frac{1}{n\rho^n}\sum_{w\in T_k}\sum_{v\in T_n^{(w)}}\Big(d(X_w,X_v)-|X_w|\Big),$$
    from which it follows by the same reasoning as above
    $$L_n\geq \frac{n-k}{n\rho^k}\sum_{w\in T_k}L_{n,k}^{(w)} -\frac{1}{n\rho^n}\sum_{w\in T_k}|X_w|\cdot|T_n^{(w)}|,$$
    completing the proof.
\end{proof}

\subsubsection{Proof of Theorem \ref{thm:first_main}}
\begin{proof}
Applying Lemma \ref{lem:decomp} for $k=\sqrt{n}$ we obtain
    $$\bigg|L_n-\frac{n-\sqrt{n}}{n\rho^{\sqrt{n}}}\sum_{w\in T_{\sqrt{n}}}L_{n,\sqrt{n}}^{(w)}\bigg|\leq\frac{1}{n\rho^n}\sum_{w\in T_{\sqrt{n}}}|X_w|\cdot|T_n^{(w)}|,$$
    where the random variables $(L_{n,\sqrt{n}}^{(w)})_{w\in T_{\sqrt{n}}}$ are i.i.d.~with $L_{n,\sqrt{n}}^{(w)}\overset{\mathcal{D}}{=}L_{n-\sqrt{n}}.$ We show that the upper bound in the above inequality converges almost surely to $0$. Since it holds
$|X_w|\leq \maxdis_{\sqrt{n}}$ for all $w\in T_{\sqrt{n}}$, we obtain
$$\frac{1}{n\rho^n}\sum_{w\in T_{\sqrt{n}}}|X_w|\cdot|T_n^{(w)}|\leq\frac{\maxdis_{\sqrt{n}}}{n\rho^n}\sum_{w\in T_{\sqrt{n}}}|T_n^{(w)}|=\frac{\maxdis_{\sqrt{n}}\cdot W_n}{n}.$$
By Proposition \ref{prop:maxdis} we have
$$\lim_{n\to\infty}\frac{\maxdis_{\sqrt{n}}\cdot W_n}{n} = \lim_{n\to\infty} \frac{\maxdis_{\sqrt{n}}}{\sqrt{n}}\cdot W_n\cdot\frac{1}{\sqrt{n}}\leq \lim_{n\to\infty} a\cdot W_n\cdot\frac{1}{\sqrt{n}} = 0,$$
almost surely, thus 
\begin{align}\label{eq:comp-to-sum}
\lim_{n\to\infty}  \bigg( L_n-\frac{n-\sqrt{n}}{n\rho^{\sqrt{n}}}\sum_{w\in T_{\sqrt{n}}}L_{n,\sqrt{n}}^{(w)}\bigg)=0, \quad \text{almost surely}.
\end{align}
So it remains to show that the sum $\frac{n-\sqrt{n}}{n\rho^{\sqrt{n}}}\sum_{w\in T_{\sqrt{n}}}L_{n,\sqrt{n}}^{(w)}$ of i.i.d.~random variables converges almost surely to the desired limit $\ell\cdot W$. For $w\in T_{\sqrt{n}}$, we write 
\begin{align*}  
\ell_n:=\E[L_n]\quad \text{and}\quad W_n^{(w)}:=\frac{|T_n^{(w)}|}{\rho^{n-\sqrt{n}}}.
\end{align*}
Lemma \ref{lem: many-to-one} together with Kingman's subadditive ergodic theorem implies that the expectation $\ell_n$ of $L_n$ converges to $\ell$ as $n\to\infty$: 
$$\ell_n = \E[L_n] = \frac{1}{n\rho^n}\E\Big[\sum_{v\in T_n}|X_v|\Big] = \frac{\rho^n}{n\rho^n}\E\big[|Y_n|\big] = \E\left[\frac{|Y_n|}{n}\right] \xrightarrow[]{n\rightarrow\infty} \ell.$$   
We then have
\begin{align*}
 \E\Big[\Big(\frac{1}{\rho^{\sqrt{n}}}\sum_{w\in T_{\sqrt{n}}}L_{n,\sqrt{n}}^{(w)}&-\ell_{n-\sqrt{n}} W_n^{(w)}\Big)^2 \Big]
        =\frac{1}{\rho^{2\sqrt{n}}}\E\Big[\sum_{w\in T_{\sqrt{n}}}\big(L_{n,\sqrt{n}}^{(w)}-\ell_{n-\sqrt{n}}W_n^{(w)}\big)^2 \Big]\\
        &+\frac{1}{\rho^{2\sqrt{n}}}\E\Big[\sum_{\substack{v,w\in T_{\sqrt{n}}\\v\neq w}}\big(L_{n,\sqrt{n}}^{(w)}-\ell_{n-\sqrt{n}}W_n^{(w)}\big)\big(L_{n,\sqrt{n}}^{(v)}-\ell_{n-\sqrt{n}}W_n^{(v)}\big)\Big].
    \end{align*} 
Proposition \ref{prop:weak-lln} together with Lemma \ref{lem: many-to-one} implies the existence of a constant $C>0$ such that
    $$\frac{1}{\rho^{2\sqrt{n}}}\E\Big[\sum_{w\in T_{\sqrt{n}}}\big(L_{n,\sqrt{n}}^{(w)}-\ell_{n-\sqrt{n}}W_n^{(w)}\big)^2\Big]=\frac{1}{\rho^{\sqrt{n}}}\E\Big[(L_{n-\sqrt{n}}-\ell_{n-\sqrt{n}}W_{n-\sqrt{n}}\big)^2\Big]\leq\frac{C}{\rho^{\sqrt{n}}} $$
    and since the random variables $L_{n,\sqrt{n}}^{(w)}$ and $L_{n,\sqrt{n}}^{(v)}$  as well as $W_n^{(w)}$ and $W_n^{(v)}$are independent for $v\neq w$, we get
\begin{align*}
\frac{1}{\rho^{2\sqrt{n}}}&\E\Big[\sum_{\substack{v,w\in T_{\sqrt{n}}\\v\neq w}}\big(L_{n,\sqrt{n}}^{(w)}-\ell_{n-\sqrt{n}}W_n^{(w)}\big)\big(L_{n,\sqrt{n}}^{(v)}-\ell_{n-\sqrt{n}}W_n^{(v)}\big)\Big|T_{\sqrt{n}}\Big]= \\
    &=\frac{1}{\rho^{2\sqrt{n}}}\sum_{\substack{v,w\in T_{\sqrt{n}}\\v\neq w}}\E\Big[ \big(L_{n,\sqrt{n}}^{(w)}-\ell_{n-\sqrt{n}}W_n^{(w)}\big)\big(L_{n,\sqrt{n}}^{(v)}-\ell_{n-\sqrt{n}}W_n^{(v)}\big)\big| T_{\sqrt{n}}\Big] \\
    &=\frac{1}{\rho^{2\sqrt{n}}}\sum_{\substack{v,w\in T_{\sqrt{n}}\\v\neq w}}\E\Big[ \big(L_{n,\sqrt{n}}^{(w)}-\ell_{n-\sqrt{n}} W_n^{(w)}\big)\big|T_{\sqrt{n}}\big]\E\big[\big(L_{n,\sqrt{n}}^{(v)}-\ell_{n-\sqrt{n}}W_n^{(v)}\big)\big|T_{\sqrt{n}}\Big]=0.
\end{align*}
By taking the expectation over the conditional expectation above, we obtain 
$$\frac{1}{\rho^{2\sqrt{n}}}\E\Big[\sum_{\substack{v,w\in T_{\sqrt{n}}\\v\neq w}}\big(L_{n,\sqrt{n}}^{(w)}-\ell_{n-\sqrt{n}}W_n^{(w)}\big)\big(L_{n,\sqrt{n}}^{(v)}-\ell_{n-\sqrt{n}}W_n^{(v)}\big)\Big]=0,$$
and therefore
    $$\E\Big[\Big(\frac{1}{\rho^{\sqrt{n}}}\sum_{w\in T_{\sqrt{n}}}L_{n,\sqrt{n}}^{(w)}-\ell_{n-\sqrt{n}}W_n^{(w)}\Big)^2\Big]\leq\frac{C}{\rho^{\sqrt{n}}}.$$
Since $\sum_{n=1}^{\infty}\frac{C}{\rho^{\sqrt{n}}}<\infty$,
in view of Markov's inequality together with Borell-Cantelli lemma we get
    $$\lim_{n\rightarrow\infty}\frac{1}{\rho^{\sqrt{n}}}\sum_{w\in T_{\sqrt{n}}}\Big(L_{n,\sqrt{n}}^{(w)}-\ell_{n-\sqrt{n}}W_n^{(w)}\Big) = 0, \quad \text{almost surely.}$$
Furthermore
$$\frac{1}{\rho^{\sqrt{n}}}\sum_{w\in T_{\sqrt{n}}}l_{n-\sqrt{n}}W_n^{(w)}=\ell_{n-\sqrt{n}}W_n\converges \ell\cdot W, \quad \text{almost surely},$$
and thus
$$\lim_{n\rightarrow\infty}\frac{n-\sqrt{n}}{n\rho^{\sqrt{n}}}\sum_{w\in T_{\sqrt{n}}}L_{n,\sqrt{n}}^{(w)}=\ell\cdot W,$$
which together with Equation \eqref{eq:comp-to-sum} implies
$$L_n=\frac{1}{n}\int |x|\,dM_n(x)\converges \ell\cdot W, \quad \text{almost surely},$$
and this proves the claim in Theorem \ref{thm:first_main}.
\end{proof}

\section{Central limit theorem}\label{sec:clt}

This section is dedicated to the proof of Theorem \ref{thm:second_main}, which relies   on Stam \cite{ConjHarris}, where a central limit theorem for branching random walks on $\Z$ is proven. Whereas on $\Z$ many computations can be done explicitly due to the additive nature of the group of integers, this is not the case anymore on general transitive graphs and more abstract tools are needed. Our proof is based on characteristic functions and on the Levy's continuity theorem. The proof of Theorem \ref{thm:second_main} is done in the following three main steps:
\begin{itemize}
\setlength\itemsep{0em}
\item We first prove in Theorem \ref{thm: CLT mean square} that the characteristic function of the rescaled empirical distribution converges in mean square distance to $W$ times the characteristic function of the standard normal distribution.
\item We then prove in Section \ref{subsec: Convergence in probability} that the rescaled empirical distributions in fact converge in probability to $W$ times a standard normal distribution independent of $W$. We make once again use of the many-to-two formula stated in Lemma \ref{lem:many-to-two}.
\item Finally, the proof of  Theorem \ref{thm:second_main} is then based on the Levy's continuity theorem.
\end{itemize}
For the rest of this section, we always suppose that the assumptions \ref{A1}-\ref{A5} hold, and in particular the underlying random walk $(Y_n)_{n\in \N}$ obeys a central limit theorem, i.e.~it exists $\sigma > 0$ such that 
$$\frac{|Y_n| - n\ell}{\sigma\sqrt{n}} \CID \mathcal{N}(0,1), \quad \text{as }n\to\infty.$$ This is in general not the case, and we conclude with an example where such a central limit theorem holds.
 
We define the characteristic function of the empirical distribution $(M_n)_{n\in \mathbb{N}}$ and of the random walk $(Y_n)_{n\in \N}$ respectively as: for $t\in \R$
\begin{align*}
\Psi_n(t) &= \int \e^{it|x|}\, dM_n(x) = \frac{1}{\rho^n}\sum_{v\in T_n} \e^{i t|X_v|}  \\
\varphi_n(t) &= \mathbb{E}\Big[\e^{it|Y_n|}\Big].
\end{align*}
We write $\varphi$ for the characteristic function of a standard Gaussian random variable, i.e. for $t\in \R$
\begin{align*}
\varphi(t) = \e^{-\frac{t^2}{2}}.
\end{align*}
The assumption that the underlying Markov chain obeys a central limit theorem implies 
$$\lim_{n\to\infty}\E\Big[\e^{it\frac{|Y_n|-n\ell}{\sigma\sqrt{n}}}\Big] = \varphi(t),
\quad \text{for } t\in \R.$$
We also have
$$\e^{-it\frac{\sqrt{n}\ell}{\sigma}}\varphi_n\Big(\frac{t}{\sigma\sqrt{n}}\Big) = \e^{-it\frac{\sqrt{n}\ell}{\sigma}}\E\Big[\e^{i\frac{t}{\sigma\sqrt{n}}|Y_n|}\Big]=\E\Big[\e^{it\frac{|Y_n|-n\ell}{\sigma\sqrt{n}}}\Big],
$$
which in turn implies
$$\lim_{n\to\infty}\e^{-it\frac{\sqrt{n}\ell}{\sigma}}\varphi_n\Big(\frac{t}{\sigma\sqrt{n}}\Big) = \varphi(t), \quad t\in \R.$$

\subsection{Convergence in mean}\label{subsec: Convergence in mean}
In this section we prove that the random function $\Psi_n(t/\sigma\sqrt{n})$ converges in mean square distance to $W\cdot\varphi(t)$.
\begin{theorem}\label{thm: CLT mean square}
Under assumptions \ref{A1}-\ref{A5}, for any $t\in\R$ it holds
\begin{align*}
\lim_{n\to\infty} \mathbb{E}\left[\Big|\Psi_n\Big(\frac{t}{\sigma\sqrt{n}}\Big) - W_n\varphi_n\Big(\frac{t}{\sigma \sqrt{n}}\Big)\Big|^2\right]  = 0.
\end{align*}
\end{theorem}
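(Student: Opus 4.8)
Throughout the computation fix $n$ and abbreviate $s=t/(\sigma\sqrt n)$, so that $\Psi_n(s)=\rho^{-n}\sum_{v\in T_n}\e^{is|X_v|}$ and $\varphi_n(s)=\E[\e^{is|Y_n|}]$, with $|\varphi_n(s)|\le 1$. The plan is to run the same second-moment scheme used in Proposition~\ref{prop:weak-lln}, with the bounded random variable $\e^{is|X_v|}$ in place of $|X_v|/n$. Since $W_n$ is real and $\varphi_n(s)$ is deterministic, I would first expand
\begin{align*}
\E\Big[\big|\Psi_n(s)-W_n\varphi_n(s)\big|^2\Big]
&=\E\big[|\Psi_n(s)|^2\big]+|\varphi_n(s)|^2\,\E[W_n^2]\\
&\quad-\varphi_n(s)\,\E\big[W_n\overline{\Psi_n(s)}\big]-\overline{\varphi_n(s)}\,\E\big[W_n\Psi_n(s)\big],
\end{align*}
and evaluate each expectation with the many-to-two formula (Lemma~\ref{lem:many-to-two}) together with Lemma~\ref{lem: Lemma many-to-two product}, conditioning on the splitting time $\tau$.

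For the mixed term, Lemma~\ref{lem:many-to-two} applied to $f(x,y)=\e^{is|x|}$ gives $\E[W_n\Psi_n(s)]=\rho^{-2n}\ExpHat\big[\e^{is|\zeta_n^1|}\prod_{v\in\text{skel}(n)}w_{D_{p(v)}}\big]$. Since under $\ProbHat$ the spatial trajectory of the first spine is a copy of the underlying walk started at $o$ and is independent of the branching data determining $\tau$, we have $\ExpHat[\e^{is|\zeta_n^1|}\mid\tau=k]=\varphi_n(s)$ for every $k$ (and the $\{\tau\ge n\}$-contribution is $o(1)$ because $\ProbHat(\tau\ge n)(\theta/\rho^2)^n\to0$ by Lemma~\ref{lemma: decay of splitting time}). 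Using Lemma~\ref{lem: Lemma many-to-two product} and the identity $\E[W_n^2]=\ProbHat(\tau\ge n)(\theta/\rho^2)^n+(\theta/\rho)^2\sum_{k=0}^{n-1}\ProbHat(\tau=k)(\theta/\rho^2)^k$ from the proof of Lemma~\ref{lemma: decay of splitting time}, this yields $\E[W_n\Psi_n(s)]=\varphi_n(s)\,\E[W_n^2]+o(1)$, and by conjugation $\E[W_n\overline{\Psi_n(s)}]=\overline{\varphi_n(s)}\,\E[W_n^2]+o(1)$. Substituting back, the three $W$-terms collapse and the claim reduces to proving
$$\E\big[|\Psi_n(s)|^2\big]-|\varphi_n(s)|^2\,\E[W_n^2]\xrightarrow[]{n\to\infty}0.$$

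For this, Lemma~\ref{lem:many-to-two} with $f(x,y)=\e^{is(|x|-|y|)}$ gives $\E[|\Psi_n(s)|^2]=\rho^{-2n}\ExpHat\big[\e^{is(|\zeta_n^1|-|\zeta_n^2|)}\prod_{v\in\text{skel}(n)}w_{D_{p(v)}}\big]$. On $\{\tau\ge n\}$ we have $\zeta_n^1=\zeta_n^2$ and the product is $\theta^n$, so that part contributes $\ProbHat(\tau\ge n)(\theta/\rho^2)^n\to0$; on $\{\tau=k\}$ it contributes $(\theta/\rho)^2(\theta/\rho^2)^k\ProbHat(\tau=k)\,\ExpHat\big[\e^{is(|\zeta_n^1|-|\zeta_n^2|)}\mid\tau=k\big]$. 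The key estimate concerns this conditional expectation. Conditionally on $\{\tau=k\}$ and on the common position $\zeta_k$, the two spines evolve as \emph{independent} copies of the underlying walk of length $n-k$ started at $\zeta_k$, so, with $h_m(x):=\E[\e^{is|Y_m^x|}]$ (where $Y^x$ denotes the walk started at $x$), the conditional expectation equals $\ExpHat\big[|h_{n-k}(\zeta_k)|^2\mid\tau=k\big]$. Using $|d(o,z)-d(x,z)|\le|x|$ for all $z$ and assumption~\ref{A4} (so $d(x,Y_m^x)\overset{\mathcal D}{=}|Y_m|$) one gets $|h_m(x)-\varphi_m(s)|\le|s|\,|x|$, hence $\big||h_{n-k}(\zeta_k)|^2-|\varphi_{n-k}(s)|^2\big|\le2|s|\,|\zeta_k|$; bounding $\ExpHat[|\zeta_k|\mid\tau=k]\le k\,\E[|Y_1|]$ via \eqref{bound the RW} (the first moment being finite by \ref{A3}), this gives $\big|\ExpHat[\e^{is(|\zeta_n^1|-|\zeta_n^2|)}\mid\tau=k]-|\varphi_{n-k}(s)|^2\big|\le\min\{2,\,2|t|\,\E[|Y_1|]\,k/(\sigma\sqrt n)\}$. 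Consequently $\E[|\Psi_n(s)|^2]-|\varphi_n(s)|^2\E[W_n^2]$ is, up to $o(1)$, bounded by $(\theta/\rho)^2$ times
$$\sum_{k=0}^{n-1}\ProbHat(\tau=k)\big(\tfrac{\theta}{\rho^2}\big)^k\min\Big\{2,\,\tfrac{2|t|\,\E[|Y_1|]\,k}{\sigma\sqrt n}\Big\}\;+\;\sum_{k=0}^{n-1}\ProbHat(\tau=k)\big(\tfrac{\theta}{\rho^2}\big)^k\big|\,|\varphi_{n-k}(s)|^2-|\varphi_n(s)|^2\big|;$$
the second sum tends to $0$ by Lemma~\ref{lem: CLT helping lemma 1}, and the first by dominated convergence over $k\in\N$, each summand tending to $0$ and being dominated by the summable $2\ProbHat(\tau=k)(\theta/\rho^2)^k$ (Lemma~\ref{lemma: decay of splitting time}). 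This finishes the proof.

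I expect the main obstacle to be precisely the key estimate of the previous paragraph: identifying the post-split conditional law of the two spines, and controlling $h_{n-k}(x)$ uniformly in the \emph{random} starting point $x=\zeta_k$. This is exactly where transitivity and the drift-independence assumption \ref{A4} are indispensable, since on a general graph $|Y_m^x|=d(o,Y_m^x)$ does not have the law of $|Y_m|$ — only the increment $d(x,Y_m^x)$ does — and the discrepancy is of order $|x|=O(k)$, which is harmless only because it is multiplied by $s=t/(\sigma\sqrt n)$ and then integrated against the (summable) weights $\ProbHat(\tau=k)(\theta/\rho^2)^k$. The remaining ingredients (the expansion, the many-to-two bookkeeping, and the reduction to $|\varphi_{n-k}|^2$ via Lemma~\ref{lem: CLT helping lemma 1}) are routine.
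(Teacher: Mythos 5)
Your proposal is correct and follows essentially the same route as the paper: the many-to-two formula conditioned on the splitting time $\tau$, the comparison of the post-split contribution to $|\varphi_{n-k}(t_n)|^2$ via a triangle-inequality remainder of size $O(|\zeta_k|)$ killed by the factor $t/(\sigma\sqrt{n})$, and the summability from Lemma \ref{lemma: decay of splitting time} together with Lemma \ref{lem: CLT helping lemma 1}. The only differences are organizational (you expand the square into four terms and collapse the cross terms using that the spine trajectory is independent of $\tau$, whereas the paper applies many-to-two directly to the centered product, and you replace the paper's split of the error sum at $\sqrt[4]{n}$ by dominated convergence in $k$), and these do not change the substance of the argument.
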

\begin{proof}
The key idea of the proof is to apply the many-to-two formula from Lemma \ref{lem:many-to-two}. For the rest of proof we set $t_n := t/(\sigma \sqrt{n})$ for simplicity. By rewriting the square under the expectation, we obtain:
\begin{align*}
 |\Psi_n(t_n) - W_n\varphi_n(t_n)|^2 &= \Big{|}\Psi_n(t_n) - \frac{|T_n|}{\rho^n}\varphi_n(t_n)\Big{|} ^2\\
&=\Big{|}\frac{1}{\rho^n}\sum_{v\in T_n} \Big{(}\e^{it_n|X_v|} - \varphi_n(t_n)\Big{)}\Big{|}^2 \\
& =\frac{1}{\rho^{2n}}\sum_{v,w\in T_n} \Big{(}\e^{it_n|X_v|} - \varphi_n(t_n)\Big{)}\overline{\Big{(}\e^{it_n|X_w|} - \varphi_n(t_n)\Big{)}}.
    \end{align*}
Since the right-hand side in the previous equation is a sum over all pairs $v,w\in T_n$ of a function depending on the positions $X_v$ and $X_w$, we can apply once again Lemma \ref{lem:many-to-two} which together with Lemma \ref{lem: Lemma many-to-two product} gives
\begin{equation}
\begin{aligned}\label{eq: expectation_1_CLT}
        &\mathbb{E}\big[|\Psi_n(t_n) - W_n\varphi_n(t_n)|^2\big] \\
       & =\frac{1}{\rho^{2n}}\ExpHat\Big[ \Big(\e^{it_n|\zeta_n^1|} - \varphi_n(t_n)\Big)\overline{\Big(\e^{it_n|\zeta_n^2|} - \varphi_n(t_n)\Big)}  \prod_{v\in\textit{skel}(n)} w_{D_{p(v)}} \Big]\\
            &=\ProbHat(\tau\geq n)\Big(\frac{\theta}{\rho^2}\Big)^n\ExpHat\Big[\Big{(}\e^{it_n|\zeta_n^1|} - \varphi_n(t_n)\Big{)}\overline{\Big{(}\e^{it_n|\zeta_n^2|} - \varphi_n(t_n)\Big{)}}\Big|\tau\geq n\Big]
       \\  &+\Big(\frac{\theta}{\rho}\Big)^2\sum_{k=0}^{n-1}\ProbHat(\tau=k)\Big(\frac{\theta}{\rho^2}\Big)^k\ExpHat\Big[\Big{(}\e^{it_n|\zeta_n^1|} - \varphi_n(t_n)\Big{)}\overline{\Big{(}\e^{it_n|\zeta_n^2|} - \varphi_n(t_n)\Big{)}}\Big| \tau=k\Big].
\end{aligned}
\end{equation}
We now estimate the two terms on the right-hand side of Equation \eqref{eq: expectation_1_CLT}. Since the characteristic functions are all bounded by $1$ we get for the first term  
\begin{align*}
\Big|\ProbHat(\tau\geq n)&\Big(\frac{\theta}{\rho^2}\Big)^n\ExpHat\Big[\Big{(}\e^{it_n|\zeta_n^1|} - \varphi_n(t_n)\Big{)}\overline{\Big{(}\e^{it_n|\zeta_n^2|} - \varphi_n(t_n)\Big{)}}\Big|\tau\geq n\Big]\Big| \\
&\leq 4\ProbHat(\tau\geq n)\Big(\frac{\theta}{\rho^2}\Big)^n\converges 0.
\end{align*}
The limit holds due to Lemma \ref{lemma: decay of splitting time}. We have the bound  
$$\left|\ExpHat\Big[\Big{(}\e^{it_n|\zeta_n^1|} - \varphi_n(t_n)\Big{)}\overline{\Big{(}\e^{it_n|\zeta_n^2|} - \varphi_n(t_n)\Big{)}}\Big| \tau=k\Big]\right|\leq 4,$$ 
which is uniform in $k,n\in\N$ and $t\in\R$. Moreover, it holds
\begin{align*}
\sum_{k=0}^{n-1}\ProbHat(\tau=k)&\Big(\frac{\theta}{\rho^2}\Big)^k\ExpHat\Big[\Big(\e^{it_n|\zeta_n^1|} - \varphi_n(t_n)\Big)\overline{\Big(\e^{it_n|\zeta_n^2|} - \varphi_n(t_n)\Big)}\Big| \tau=k\Big]\\
& \sum_{k=0}^{\infty}\ProbHat(\tau=k)\Big(\frac{\theta}{\rho^2}\Big)^k\ExpHat\Big[\Big(\e^{it_n|\zeta_n^1|} - \varphi_n(t_n)\Big)\overline{\Big(\e^{it_n|\zeta_n^2|} - \varphi_n(t_n)\Big)}\Big| \tau=k\Big]\mathbf{1}\{k<n\}.
\end{align*}
Thus, in order to bound the second term on the right-hand side of Equation \eqref{eq: expectation_1_CLT}, by the dominated convergence theorem, 
it suffices to show that for every $t\in\R$ and $k\in\N$ it holds that
\begin{equation}
\begin{aligned}\label{eq: limit_expectation_1_CLT}
\lim_{n\to\infty}\ExpHat\Big[\Big{(}\e^{it_n|\zeta_n^1|} - \varphi_n(t_n)\Big{)}\overline{\Big{(}\e^{it_n|\zeta_n^2|} - \varphi_n(t_n)\Big{)}}\Big| \tau=k\Big] = 0.
\end{aligned}
\end{equation}
Note that on the event $\{\tau = k\}$, $|\zeta_n^1|$ and $
|\zeta_n^2|$ are distributed as $|Y_n|$, and so a simple computation yields 
\begin{align*}
    &\ExpHat\Big[\Big{(}\e^{it_n|\zeta_n^1|} - \varphi_n(t_n)\Big{)}\overline{\Big{(}\e^{it_n|\zeta_n^2|} - \varphi_n(t_n)\Big{)}}\Big| \tau=k\Big] \\
    &= \ExpHat\Big[\e^{it_n(|\zeta_n^1|-|\zeta_n^2|)}\Big|\tau =k\Big]- \ExpHat\Big[\e^{it_n|\zeta_n^1|}\Big|\tau=k\Big]\overline{\varphi_n(t_n)} - \varphi_n(t_n)\ExpHat\Big[\e^{-it_n|\zeta_n^2|}\Big|\tau = k \Big] +|\varphi_n(t_n)|^2\\
    &= \ExpHat\Big[\e^{it_n(|\zeta_n^1|-|\zeta_n^2|)}\Big|\tau = k \big] -2\varphi_n(t_n)\overline{\varphi_n(t_n)}+|\varphi_n(t_n)|^2
    \\
    &=\ExpHat\Big[\e^{it_n(|\zeta_n^1|-|\zeta_n^2|)}\Big|\tau = k \Big]  - |\varphi_n(t_n)|^2.
\end{align*} 
The key observation needed in order to estimate this term is that after the two spine particles have split, they move independent of each other, and therefore the term 
$\ExpHat\big[\e^{it_n|\zeta_n^1|}\e^{-it_n|\zeta_n^2|} \big| \tau=k\big]$
stays close to $|\varphi_n(t_n)|^2$ as long as $k$ is much smaller then $n$. Moreover, the probability that spine particles split at later times decays exponentially in view of Lemma \ref{lemma: decay of splitting time}.
We first estimate the exponent of the exponential inside the above expectation by using the triangle and reverse triangle inequality. For $k\leq n$ it holds 
\begin{align*}
|\zeta_n^1|-|\zeta_n^2|\leq d(\zeta_k^1,\zeta_n^1) + |\zeta_k^1| - d(\zeta_k^2,\zeta_n^2) + |\zeta_k^2| 
 = d(\zeta_k^1,\zeta_n^1) - d(\zeta_k^2,\zeta_n^2) + (|\zeta_k^1|+|\zeta_k^2|),
\end{align*}
and similarly
\begin{align*}
       |\zeta_n^1|-|\zeta_n^2|\geq d(\zeta_k^1,\zeta_n^1) - |\zeta_k^1| - d(\zeta_k^2,\zeta_n^2) - |\zeta_k^2| 
        = d(\zeta_k^1,\zeta_n^1) - d(\zeta_k^2,\zeta_n^2) - (|\zeta_k^1|+|\zeta_k^2|).
\end{align*}
On the event $\lbrace \tau = k \rbrace$, for some $k\leq n$ we set $\zeta_k =  \zeta_k^1 = \zeta_k^2$ to be the random position both particles are located at the time $\tau$ of splitting. The above  estimates imply that on the event $\lbrace\tau = k\rbrace$ for $k\leq n$ the remainder $ R_{n,k} :=   |\zeta_n^1|-|\zeta_n^2| -d(\zeta_k^1,\zeta_n^1) + d(\zeta_k^2,\zeta_n^2)$ is bounded by
\begin{align*}
|R_{n,k}|\leq 2|\zeta_k|.
\end{align*}
Since on the event $\lbrace\tau = k\rbrace$ for $k\leq n$ the random variables  $d(\zeta_k^1,\zeta_n^1)$ and $d(\zeta_k^2,\zeta_n^2)$ are independent and distributed as $|Y_{n-k}|$, the following holds:
\begin{align*}
    \ExpHat\Big[\e^{it_n\big(d(\zeta_k^1,\zeta_n^1) - d(\zeta_k^2,\zeta_n^2) \big)}\Big|\tau = k\Big]  = |\varphi_{n-k}(t_n)|^2.
\end{align*}
We therefore obtain
\begin{align*}
      &\ExpHat\Big[ \e^{it_n(|\zeta_n^1|-|\zeta_n^2|)}\Big|\tau = k\Big] =\\
      &=\ExpHat\Big[ e^{it_n(d(\zeta_k,\zeta_n^1)-d(\zeta_k,\zeta_n^2) )}\Big|\tau = k\Big] + \ExpHat\Big[ e^{it_n(d(\zeta_k,\zeta_n^1)-d(\zeta_k,\zeta_n^2))}\big{(} e^{it_nR_{n,k}}-1\big{)}\Big|\tau = k\Big] \\
      &= |\varphi_{n-k}(t_n)|^2 + \ExpHat\Big[ e^{it_n(d(\zeta_k,\zeta_n^1)-d(\zeta_k,\zeta_n^2))}\big{(} e^{it_nR_{n,k}}-1\big{)}\Big|\tau = k\Big] ,
\end{align*} 
which together with the inequality $|\e^{it}-1|\leq |t|$ for $ t\in\R$
yields 
\begin{align*}
    \Big|\ExpHat\Big[ e^{it_n\big(d(\zeta_k,\zeta_n^1)-d(\zeta_k,\zeta_n^2)\big )}\big{(} e^{it_nR_{n,k}}-1\big{)}|\tau = k]\Big|&\leq \ExpHat\Big[\big| e^{it_nR_{n,k}}-1\big|\Big|\tau = k\Big] \\
    &\leq \ExpHat [|t_n|(|\zeta_k^1|+|\zeta_k^2|)]\\
&= \ExpHat \big[ 2|t_n|\cdot|\zeta_k|\big] =\E\big[2|t_n|\cdot|Y_k|\big].
\end{align*}
Thus, the absolute value of the  terms of the sequence in Equation \eqref{eq: limit_expectation_1_CLT} can be bounded by
\begin{align*}
\Big|\ExpHat&\Big[\Big{(} \e^{it_n|\zeta_n^1|} - \varphi_n(t_n)\Big{)}\overline{\Big{(}\e^{it_n|\zeta_n^2|} - \varphi_n(t_n)\Big{)}}\Big| \tau=k\Big]\Big|
\\
&\leq |\varphi_{n-k}(t_n)|^2 + \Big|\ExpHat\Big[ e^{it_n(d(\zeta_k^1,\zeta_n^1)-d(\zeta_k^2,\zeta_n^2))}\Big( e^{it_n R_{n,k}}-1\Big)\Big]\Big| - |\varphi_n(t_n)|^2    
\\
&\leq \big| |\varphi_{n-k}(t_n)|^2  - |\varphi_n(t_n)|^2\big|+2\E\big[|t_n|\cdot|Y_k|\big].
\end{align*}
In view of Lemma \ref{lem: CLT helping lemma 2}, the first term of the right-hand side of the previous equation converges to $0$. The second term converges to $0$ since
$$
\lim_{n\to\infty}|t_n| = \lim_{n\to\infty}\frac{|t|}{\sigma\sqrt{n}} = 0,
$$
and so the proof is finished.
\end{proof}

\subsection{Convergence in probability}\label{subsec: Convergence in probability}

We show next, by using Theorem \ref{thm: CLT mean square}, how to prove the almost sure convergence of the characteristic functions of the rescaled empirical distributions over averages on intervals on subsequences. This result together with the characterization of convergence in probability via subsequences  and Levy's continuity theorem in Theorem \ref{thm: Levy} will be used in order to conclude the proof of Theorem \ref{thm:second_main}.

\begin{lemma}\label{lem: CLT helping lemma 3}
Under assumptions \ref{A1}-\ref{A5}, it holds
\begin{align*}
\lim_{n\to\infty}\E \left[\int_{-\infty}^\infty \e^{-t^2}\Big|\e^{-it\frac{\sqrt{n}\ell}{\sigma}}\Psi_n\Big( \frac{t}{\sigma\sqrt{n}}\Big) - W\varphi(t)\Big|^2\, dt\right] = 0.
\end{align*}
Moreover for every increasing sequence $(a_n)_{n\in \N}$ there exists an increasing subsequence   $(a_{n_k})_{k\in \N}$ such that, for every $y\in\R$
\begin{align*}
 \lim_{n\to\infty}\int_{0}^y \e^{-it\frac{\sqrt{a_{n_k}}\ell}{\sigma}}\Psi_{a_{n_k}}\Big( \frac{t}{\sigma\sqrt{a_{n_k}}}\Big) \,dt =
W\cdot\int_0^y   \varphi(t) \,dt\quad \text{almost surely}.
\end{align*}
\end{lemma}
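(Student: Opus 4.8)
The plan is to recognise the first statement as the assertion that the non-negative random variables
$$\Xi_n := \int_{-\infty}^\infty \e^{-t^2}\Big|\e^{-it\frac{\sqrt{n}\ell}{\sigma}}\Psi_n\Big( \frac{t}{\sigma\sqrt{n}}\Big) - W\varphi(t)\Big|^2\, dt$$
converge to $0$ in $L^1(\Prob)$, and then to deduce the second statement from the standard passage ``$L^1$-convergence $\Rightarrow$ convergence in probability $\Rightarrow$ almost sure convergence along a subsequence'', combined with a weighted Cauchy--Schwarz estimate that turns $L^2$-control of the characteristic functions into control of their primitives.

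For the first statement I would fix $t\in\R$ and decompose
$$\e^{-it\frac{\sqrt{n}\ell}{\sigma}}\Psi_n\Big(\tfrac{t}{\sigma\sqrt{n}}\Big) - W\varphi(t)
= \e^{-it\frac{\sqrt{n}\ell}{\sigma}}\Big(\Psi_n\big(\tfrac{t}{\sigma\sqrt{n}}\big)-W_n\varphi_n\big(\tfrac{t}{\sigma\sqrt{n}}\big)\Big)
+ W_n\Big(\e^{-it\frac{\sqrt{n}\ell}{\sigma}}\varphi_n\big(\tfrac{t}{\sigma\sqrt{n}}\big)-\varphi(t)\Big)
+ (W_n-W)\varphi(t).$$
The first summand tends to $0$ in $L^2(\Prob)$ by Theorem \ref{thm: CLT mean square}; the second tends to $0$ in $L^2(\Prob)$ because $\e^{-it\frac{\sqrt{n}\ell}{\sigma}}\varphi_n(t/(\sigma\sqrt{n}))\to\varphi(t)$ is a deterministic convergence while $\sup_n\E[W_n^2]<\infty$ by Lemma \ref{lemma: decay of splitting time}; and the third tends to $0$ in $L^2(\Prob)$ since $W_n$ is an $L^2$-bounded martingale, hence converges to $W$ in $L^2$. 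Consequently $g_n(t):=\E\big[\big|\e^{-it\frac{\sqrt{n}\ell}{\sigma}}\Psi_n(t/(\sigma\sqrt{n}))-W\varphi(t)\big|^2\big]\to 0$ for every $t\in\R$. Using $|\Psi_n(s)|\le\rho^{-n}|T_n|=W_n$ for all $s$ and $|\varphi|\le 1$, I would bound $g_n(t)\le 2(\E[W_n^2]+\E[W^2])\le C$ uniformly in $n$ and $t$; then Tonelli's theorem gives $\E[\Xi_n]=\int_{-\infty}^\infty \e^{-t^2}g_n(t)\,dt$, and dominated convergence, with dominating function $C\e^{-t^2}\in L^1(dt)$, yields $\E[\Xi_n]\to 0$.

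For the second statement, from $\Xi_n\ge 0$ and $\E[\Xi_n]\to 0$ it follows that $\Xi_n\to 0$ in probability, hence for any increasing sequence $(a_n)_{n\in\N}$ the sequence $(\Xi_{a_n})_{n\in\N}$ also converges to $0$ in probability, so some subsequence $(\Xi_{a_{n_k}})_{k\in\N}$ converges to $0$ almost surely. On that almost sure event, for each fixed $y\in\R$ I would apply Cauchy--Schwarz with weight $\e^{-t^2/2}$, integrating over the interval $I_y$ with endpoints $0$ and $y$:
$$\Big|\int_{0}^{y}\e^{-it\frac{\sqrt{a_{n_k}}\ell}{\sigma}}\Psi_{a_{n_k}}\Big(\tfrac{t}{\sigma\sqrt{a_{n_k}}}\Big)\,dt - W\int_{0}^{y}\varphi(t)\,dt\Big|
\le \Big(\int_{I_y}\e^{t^2}\,dt\Big)^{1/2}\Xi_{a_{n_k}}^{1/2}\ \longrightarrow\ 0,$$
where $\int_{I_y}\e^{t^2}\,dt<\infty$ because $y$ is finite. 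This is exactly the claimed convergence, for every $y\in\R$, along the subsequence.

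I do not expect a genuine obstacle: the substantive input is Theorem \ref{thm: CLT mean square}, together with the $L^2$-boundedness of the population martingale (Lemma \ref{lemma: decay of splitting time}) and the deterministic convergence of the rescaled $\varphi_n$. The only points needing care are the measure-theoretic bookkeeping — exchanging $\E$ and $\int dt$ via Tonelli on the non-negative integrand, producing the uniform domination $g_n\le C$ for dominated convergence, and noting that $\mathbbm{1}_{I_y}\e^{t^2/2}\in L^2(\e^{-t^2}\,dt)$ for finite $y$, which is precisely what converts the $L^2(\e^{-t^2}dt)$-type control into control of the primitives $t\mapsto\int_0^t$.
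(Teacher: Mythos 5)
Your proposal is correct and follows essentially the same route as the paper's proof: the first claim rests on Theorem \ref{thm: CLT mean square}, the $L^2$ convergence of the population martingale (Lemma \ref{lem: L2 population martingale}), and the deterministic convergence of the rescaled $\varphi_n$, combined via Tonelli/Fubini and dominated convergence, while the second claim is obtained by extracting an almost surely convergent subsequence from the $L^1$ convergence and converting the weighted $L^2(\e^{-t^2}dt)$ control into control of the primitives $\int_0^y$. The only cosmetic differences are your three-term decomposition (the paper merges your second and third terms into $W_n\e^{-it\sqrt{n}\ell/\sigma}\varphi_n(t/(\sigma\sqrt{n}))-W\varphi(t)$ and handles it via the triangle inequality for the norm $f\mapsto\E[\int \e^{-t^2}|f(t)|^2dt]^{1/2}$) and your use of Cauchy--Schwarz on the finite interval where the paper uses Jensen's inequality with respect to the Gaussian weight; both yield the same estimate.
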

\begin{proof}
First note that 
$$ f\mapsto \E \left[\int_{-\infty}^\infty \frac{\e^{-t^2}}{\sqrt{2\pi}}\cdot|f(t)|^2\, dt\right]^{\frac{1}{2}}$$
is a norm, and thus fulfills the triangle inequality. Since $|Y_n|$ fulfills a central limit theorem, and since the population martingale $W_n$ converges in $L^2$, it holds 
\begin{align*}
\lim_{n\to\infty}W_n\e^{-it\frac{\sqrt{n}\ell}{\sigma}}\varphi_n\Big( \frac{t}{\sigma\sqrt{n}}\Big) = W\cdot\varphi(t), \quad t\in\R
\end{align*}
in $L^2$. We have
    \begin{align*}
        \E & \left[\int_{-\infty}^\infty \e^{-t^2}\Big|\e^{-it\frac{\sqrt{n}\ell}{\sigma}}\Psi_n \Big( \frac{t}{\sigma\sqrt{n}}\Big) - W\varphi(t)\Big|^2\,  dt \right]^{\frac{1}{2}} \\ & \leq \E \left[\int_{-\infty}^\infty \e^{-t^2}\Big|\Psi_n\Big( \frac{t}{\sigma\sqrt{n}}\Big) - W_n\varphi_n\Big( \frac{t}{\sigma\sqrt{n}}\Big)\Big|^2\, dt \right]^{\frac{1}{2}}    \\
         &\hspace{1cm}+ \E \left[\int_{-\infty}^\infty \e^{-t^2}\Big|W_n\e^{-it\frac{\sqrt{n}\ell}{\sigma}}\varphi_n\Big( \frac{t}{\sigma\sqrt{n}}\Big) -  W\varphi(t)\Big|^2\, dt\right]^{\frac{1}{2}}\\
         & = \left(\int_{-\infty}^\infty \e^{-t^2}\E\left[\Big|\Psi_n\Big( \frac{t}{\sigma\sqrt{n}}\Big) - W_n\varphi_n\Big( \frac{t}{\sigma\sqrt{n}}\Big)\Big|^2\right]\, dt\right)^{\frac{1}{2}}
         \\
         &\hspace{1cm}+\left(\int_{-\infty}^\infty \e^{-t^2}\E\left[\Big|W_n\e^{-it\frac{\sqrt{n}\ell}{\sigma}}\varphi_n\Big( \frac{t}{\sigma\sqrt{n}}\Big) -  W\varphi(t) \Big|^2\right]\, dt\right)^{\frac{1}{2}} \converges 0,
    \end{align*}
where the first equality follows from the triangle inequality, the second equality follows from Fubini's theorem, and the third equality follows from  Theorem \ref{thm: CLT mean square} and Lemma \ref{lem: L2 population martingale}, and so the first claim is proven. 

Due to \cite[Corollary 6.13, Page 151]{Klenke}, convergence in probability implies almost sure convergence along a subsequence. Thus, 
for the second statement, there exists an increasing subsequence  $(a_{n_k})_{k\in \N}$ such that
      \begin{align*}
        \lim_{k\to\infty}\int_{-\infty}^\infty \e^{-t^2}\Big|\e^{-it\frac{\sqrt{ a_{n_k}}\ell}{\sigma}}\Psi_{a_{n_k}}\Big( \frac{t}{\sigma\sqrt{a_{n_k}}}\Big) - W\varphi(t)\Big|^2 \, dt = 0, \quad \text{almost surely}.
    \end{align*}
Due to Jensen's inequality  we have
 \begin{align*}
        &\int_{-\infty}^\infty \e^{-t^2}\Big|\e^{-it\frac{\sqrt{a_{n_k}}\ell}{\sigma}}\Psi_{a_{n_k}}\Big( \frac{t}{\sigma\sqrt{a_{n_k}}}\Big) - W\varphi(t)\Big|^2\, dt  \\&\geq \frac{1}{\sqrt{\pi}}\Big(  \int_{-\infty}^\infty \e^{-t^2}\Big|\e^{-it\frac{\sqrt{a_{n_k}}\ell}{\sigma}}\Psi_{a_{n_k}}\Big( \frac{t}{\sigma\sqrt{a_{n_k}}}\Big) - W\varphi(t)\Big|\, dt\Big)^2,
    \end{align*}
and since the left-hand side converges to $0$ almost surely we obtain 
\begin{align*}
\lim_{k\to\infty}\int_0^y \Big|\e^{-it\frac{\sqrt{a_{n_k}}\ell}{\sigma}}\Psi_{a_{n_k}}\Big( \frac{t}{\sigma\sqrt{a_{n_k}}}\Big) - W\varphi(t)\Big|\, dt = 0 \quad\text{almost surely,}
\end{align*}
for any $y\in \R.$
This further implies that
    \begin{align*}
            \lim_{k\to\infty}\int_{0}^y \e^{-it\frac{\sqrt{a_{n_k}}\ell}{\sigma}}\Psi_{a_{n_k}}\Big( \frac{t}{\sigma\sqrt{a_{n_k}}}\Big) \,dt
        = W \cdot\int_0^y  \varphi(t) \,dt\quad \text{almost surely,}
    \end{align*}
for any $y\in\R$, and this completes the proof of the second claim.
\end{proof}

\subsection{Proof of Theorem \ref{thm:second_main}}
\begin{proof}
Recall the definition of $M_n^*$ as defined in (\ref{eq:mstar}), which we will rewrite as
$$M_n^* := \frac{1}{\rho^n}\sum_{v\in T_n}\delta_{\frac{|X_v|-n\ell}{\sigma\sqrt{n}}}.$$ 
Note that this is just the empirical distribution $M_n$ as defined in Equation \eqref{eq:emp-distr} transformed by the map 
$$G\to \R:x\mapsto \frac{|x|-n\ell}{\sigma\sqrt{n}},$$
and therefore it holds 
\begin{align*}
    M_n^*((-\infty,x]) = M_n(A(0, n\ell+x\sqrt{n}\sigma)) \quad \text{for } x\in\R.
\end{align*}
We use the characterization of convergence in probability via subsequences \cite[Corollary 6.13, Page 151]{Klenke}, and for this let $(a_n)_{n\in \N}$ be an increasing sequence of natural numbers. In view of Lemma \ref{lem: CLT helping lemma 3}, there exists a subsequence $(a_{n_k})_{k\in \N}$ such that 
\begin{align*}
\lim_{k\to\infty}\int_{0}^y \e^{-it\frac{\sqrt{a_{n_k}}\ell}{\sigma}}\Psi_{a_{n_k}}\Big( \frac{t}{\sigma\sqrt{a_{n_k}}}\Big) \,dt= W\cdot
         \int_0^y   \varphi(t) \,dt, \quad \text{almost surely}
\end{align*}
for every $y\in\R$, which is equivalent to 
\begin{align}\label{eq:as-subseq}
 \lim_{k\to\infty}   \int_{-\infty}^\infty f \,dM_{a_{n_k}}^* = W\cdot\frac{1}{\sqrt{2\pi}}\int_{-\infty}^\infty f(t)\e^{-\frac{t^2}{2}}\,dt \quad \text{almost surely}
\end{align}
for every $f\in\mathcal{S}$, where $\mathcal{S}$ is defined as
\begin{align*}
    \mathcal{S}:=\Big\lbrace x\mapsto\int_0^y\e^{itx}\,dt\Big|y\in\R\Big\rbrace.
\end{align*}
Indeed, Equation \eqref{eq:as-subseq} holds in view of the following computation for every $n\in\N$:
\begin{align*}
\int \bigg( \int_0^y \e^{itx}\,dt\bigg) \,dM_n^*(x) &= \int_0^y \int \e^{itx}\,dM_n^*(x) \,dt = \int_0^y \frac{1}{\rho^n}\sum_{v\in T_n} \e^{it\frac{|X_v|-n\ell}{\sigma\sqrt{n}}}\,dt \\
&= \int_0^y \e^{-it\frac{\sqrt{n}\ell}{\sigma}} \frac{1}{\rho^n}\sum_{v\in T_n}e^{i\frac{t}{\sigma\sqrt{n}}|X_v|}\,dt
= \int_0^y \e^{-it\frac{\sqrt{n}\ell}{\sigma}}\Psi_n\Big(\frac{t}{\sigma\sqrt{n}}\Big) \,dt.
\end{align*}
Lemma \ref{lem: CLT helping lemma 4} implies that the family of measures $( M^*_{a_{n_k}})_{k\in\N}$ is tight almost surely. Moreover it holds that
\begin{align*}
 \sup_{n\in\R}M_n^*(\R) = \sup_{n\in\R} W_n < \infty
\end{align*}
almost surely. Since $\mathcal{S}$ is a separating family of functions by Lemma \ref{lem : separating}, we can thus use Theorem \ref{thm: Levy} to obtain that 
\begin{align*}
\lim_{k\to\infty} M^*_{a_{n_k}} = W\cdot\mathcal{N}(0,1), \quad \text{weakly almost surely}.
\end{align*}
Now let $x\in\R$ be a real number. Since weak convergence is equivalent with convergence in distribution, the Portmanteau theorem implies that the limit
    \begin{align*}
        \lim_{k\to\infty} M^*_{a_{n_k}}((-\infty,x]) = W\cdot\Phi(x)
    \end{align*}
holds almost surely.
By using again the characterization, we finally obtain that  
\begin{align*} M_n (A (0, n\ell +x\sqrt{n}\sigma)) = M_n^* ((-\infty,x]) \CIP W\cdot\Phi(x),
    \end{align*}
which proves the statement of Theorem \ref{thm:second_main}.
\end{proof}

\section{Branching anisotropic random walks on homogeneous trees}\label{sec: example}
In this final section we apply Theorem \ref{thm:first_main} and Theorem \ref{thm:second_main} to the concrete example of anisotropic random walks on homogeneous trees.

\textbf{Anisotropic random walks on homogeneous trees.}
Consider a countable infinite group $G$ with symmetric generating set $S\subset G$. The Cayley graph $\Gamma(G,S)$ of $G$ has all elements of $G$ as vertices, and $a,b\in G$ are connected by an edge if and only if $ab^{-1}\in S$. Note that the condition that $S$ is symmetric is necessary for this definition to be well defined, and the condition that $S$ is generating is necessary in order for $\Gamma(G,S)$ to be connected. A Cayley graph is always vertex transitive and thus regular i.e.~each vertex has the same degree, which is finite if and only if $S$ is finite.
The homogeneous tree $\mathbb{T}_d$ of degree $d\geq 3$ is a regular tree where every vertex has  $d$ neighbours, and it can be realized as a Cayley graph of the $d$-fold free product of the group $\Z_2$, i.e.
$G = \Z_2*\Z_2*\cdots*\Z_2$.
In order to define an anisotropic random walk on $\mathbb{T}_d$ we define random walks on groups first. Let $\mu$ be a probability measure on $G$ and consider a sequence of i.i.d.~random variables $(\xi_n)_{n\in\N}$ distributed according to $\mu$. The random walk started in $x\in G$ is then given by the sequence $(Y_n^x)_{n\in\N}$, where
$$Y_n^x = x\xi_1\xi_2\cdots \xi_n.$$
It the walk starts at the identity of $G$, we write $(Y_n)_{n\in N}$. For $a,b\in G$,
the transition probabilities can be written as
$$\Prob(Y_{n+1} = b|Y_n = a) = \mu(a^{-1}b),$$
and thus a random walk on $G$ is a homogeneous Markov chain on $G$, and therefore on $\Gamma(G,S)$ ($S$ is some symmetric and finite generating set for $G$). We then call a random walk on $\mathbb{T}_d$ \textit{isotropic} if the probabilities $\mu(g), g\in S$ are identical (an example is the simple random walk); otherwise the walk is called \textit{anisotropic}.
It is well known that the speed of the simple random walk on $\mathbb{T}_d$ is $1-2/d$, and that the random walk satisfies a central limit theorem. This is due to the fact that $d(o,Y_n)=|Y_n|$ is a reflected random walk on $\Z$ with drift $1\cdot (d-1)/d + (-1)\cdot 1/d = 1-2/d$ . Sawyer-Steger \cite{SawSteg} have proven a law of large numbers and a central limit theorem for the rate of escape of anisotropic random walks on homogeneous trees. The law of large numbers follows from Kingman's subadditive ergodic theorem \cite{SubET}, but in \cite{SawSteg} the authors considered 
more general quantities than $|Y_n|$. 
Moreover, they proved a central limit theorem and a law of iterated logarithm for these quantities and therefore for $|Y_n|$. The central limit theorem states that if $\E[|Y_1|^{4+\delta}]< \infty$ for some $\delta > 0$ and if the support of the law of $Y_1$ is not contained in a proper subgroup of $G$, then there exists a constant $\sigma>0$, such that 
$$\frac{|Y_n|-n\ell}{\sigma\sqrt{n}} \CID \mathcal{N}(0,1), \quad \text{as }n\to\infty$$
and
$$\sigma = \limsup_{n\to\infty} \frac{|Y_n|-n\ell}{\sqrt{2n\log\log n}} \quad \text{almost surely.}$$

\textbf{Branching anisotropic random walks on $\mathbb{T}_d$.}
We use the terminology \textit{branching anisotropic random walk} for branching random walks in which the underlying Markov chain is an anisotropic random walk. 
From the exponential moment condition $\E[\e^{t|Y_1|}] <\infty$ for some $t>0$, it follows that $\E[|Y_1|^{4+\delta}]<\infty$ for some $\delta > 0$ (in fact for every $\delta>0$), so assumptions \ref{A3} and \ref{A4} from Theorem \ref{thm:first_main} and Theorem \ref{thm:second_main} are fullfiled. Thus together with the results from \cite{SawSteg}, Theorem \ref{thm:first_main} implies that for branching anisotropic random walks on homogeneous trees $\mathbb{T}_d$, the average distance of particles from the origin on $\mathbb{T}_d$ grows almost surely linearily with growth rate $\ell>0$: 
\begin{align*}
 \lim_{n\to\infty}\frac{1}{n\rho^n}\sum_{|v|=n} |X_v| = \ell\cdot W \quad \text{almost surely}.
\end{align*}
Moreover Theorem \ref{thm:second_main} implies that a Stam-type central limit theorem holds, i.e.~for any two fixed real numbers $a<b$:
$$\rho^{-n}\big|\lbrace  v\in T_n: |X_v|\in [n\ell+a\sqrt{n}\sigma,n\ell+ b\sqrt{n}\sigma ] \rbrace\big| \CIP W\cdot\big(\Phi(b)-\Phi(a)\big), \quad \text{as } n\to\infty.$$

\textbf{Simulations.}
We simulate the results on $\mathbb{T}_3$, the homogeneous tree of degree $3$, i.e.~the Cayley graph of $\Z_2*\Z_2*\Z_2$. Every vertex can be represented as a finite word over the alphabet $\{a,b,c\}$, such that two consecutive letters cancel. We choose the one-step distribution 
$$\mu(a) = 0.1,\, \mu(b) = 0.2, \, \mu(c) = 0.1,\, \mu(ab) = 0.15,\, \mu(abc) = 0.15,\, \mu(ac) = 0.3$$
as well as the offspring distribution for the branching process.
$$\pi = \frac{1}{2}\delta_1 + \frac{1}{2}\delta_2.$$
\begin{center}
\begin{figure}[h]
  \centering
  \begin{minipage}[b]{0.49\textwidth}
    \includegraphics[width=\textwidth]{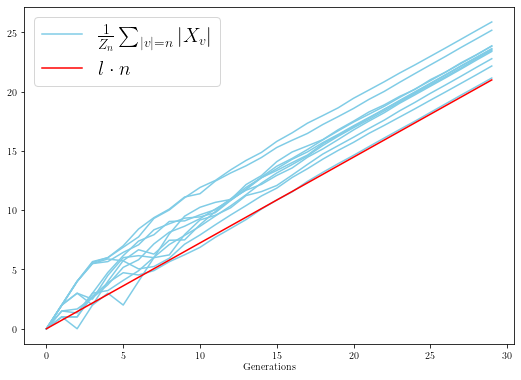}
    \caption{Normalized by $Z_n$}\label{fig:sim1}
  \end{minipage}
  \hfill
  \begin{minipage}[b]{0.49\textwidth}
    \includegraphics[width=\textwidth]{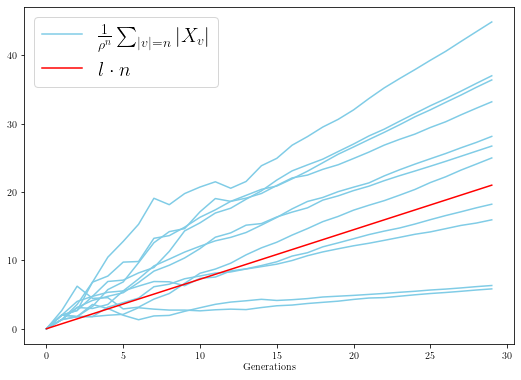}
    \caption{Normalized by $\rho^n$}\label{fig:sim2}
  \end{minipage}
\end{figure}
\end{center}
\vspace{-0.5cm}
\begin{center}
\begin{figure}[h]
  \begin{minipage}[b]{0.49\textwidth}
    \includegraphics[width=\textwidth]{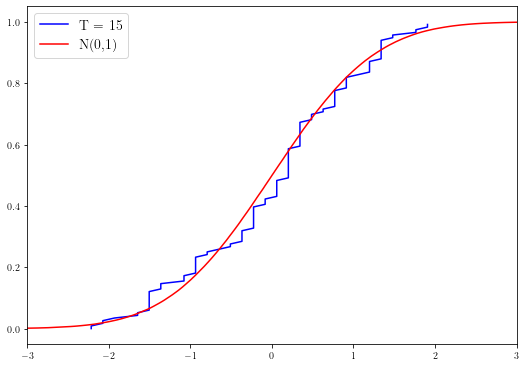}
    \caption{$T = 15$}\label{fig:sim3}
  \end{minipage}
  \hfill
  \begin{minipage}[b]{0.49\textwidth}
    \includegraphics[width=\textwidth]{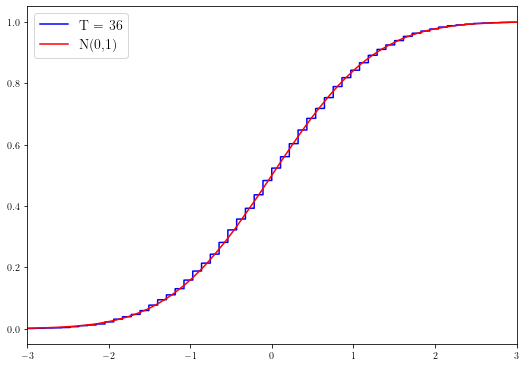}
    \caption{$T = 36$}\label{fig:sim4}
  \end{minipage}
\end{figure}
\end{center}
We have simulated the average displacement (in Figure \ref{fig:sim1} divided by the actual number of particles $Z_n$, and in Figure  \ref{fig:sim2} divided by $\rho^n$) and the rescaled empirical distribution at time $T = 15$ and time $T=36$ in Figure \ref{fig:sim3} and Figure \ref{fig:sim4} respectively.
For the law of large numbers one can see how the average displacement grows linearly with growth rate $\ell$ times $W$, and one can also see that the empirical distribution is already after a few steps close to a normal distribution.

\section*{Appendix}
We collect below several auxiliary results needed in Section \ref{sec:clt}.
\begin{lemma}\label{lem: L2 population martingale}
If $\theta<\infty$, then the population martingale $W_n = Z_n/\rho^n$ converges in $L^2$ to the limit $W$.
\end{lemma}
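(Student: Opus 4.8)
The plan is to show that $(W_n)_{n\in\N}$ is an $L^2$-bounded martingale and then invoke the $L^2$ martingale convergence theorem. First I would note that $W_n = Z_n/\rho^n$ is a martingale with respect to the natural filtration of the Galton-Watson process, which is immediate from $\E[Z_{n+1}\mid Z_n] = \rho Z_n$. The essential quantitative input is already contained in the proof of Lemma \ref{lemma: decay of splitting time}, where the recursion $\E[W_{n+1}^2] = \E[W_n^2] + \rho^{-n}(\theta-\rho^2)/\rho^2$ was solved to give
$$\E[W_n^2] = 1 + \Big(\frac{\theta-\rho^2}{\rho^2}\Big)\sum_{k=0}^{n-1}\frac{1}{\rho^k}.$$
Since $\rho>1$ by \ref{A2} and $\theta<\infty$ by hypothesis, and since $\theta\geq\rho^2$ by Jensen's inequality, the right-hand side is nondecreasing in $n$ and bounded above by $1 + \frac{\theta-\rho^2}{\rho(\rho-1)} = \E[W^2]$. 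Hence $\sup_{n\in\N}\E[W_n^2]<\infty$.

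Next I would conclude via orthogonality of martingale increments. Because the increments $W_{k+1}-W_k$ are pairwise orthogonal in $L^2$, we have $\E[W_n^2] = \E[W_0^2] + \sum_{k=0}^{n-1}\E[(W_{k+1}-W_k)^2]$, so the uniform bound above forces $\sum_{k\geq 0}\E[(W_{k+1}-W_k)^2]<\infty$. Consequently $(W_n)_{n\in\N}$ is a Cauchy sequence in $L^2$ and therefore converges in $L^2$ to some square-integrable random variable. Since condition \ref{A1} implies $\sum_{k}k\log k\,\pi(k)<\infty$, the Kesten-Stigum theorem \cite{KestenStigum} gives $W_n\to W$ almost surely; as almost sure and $L^2$ limits agree whenever both exist, the $L^2$-limit must equal $W$, which proves the claim.

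There is essentially no serious obstacle here: the statement is the classical $L^2$-convergence of a second-moment-bounded branching martingale, and the only delicate bookkeeping point is that the needed second-moment identity has already been established (through the many-to-two machinery) in the proof of Lemma \ref{lemma: decay of splitting time}, so one may simply quote the elementary recursion derived there rather than re-deriving it.
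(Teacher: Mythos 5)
Your proof is correct and follows essentially the same route as the paper: the key input in both is the uniform bound $\sup_{n}\E[W_n^2]<\infty$ obtained from the identical second-moment recursion (as in the proof of Lemma \ref{lemma: decay of splitting time}). The only difference is that the paper then simply cites the $L^2$ martingale convergence theorem \cite[Theorem 11.10]{Klenke}, whereas you unpack that theorem by hand (orthogonal increments, Cauchy in $L^2$) and identify the limit with the almost sure limit $W$, which is a harmless elaboration rather than a genuinely different argument.
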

\begin{proof}
Recall the definition of a Galton-Watson process in Section \ref{sec:prelim}. By the calculations from the proof of Lemma \ref{lemma: decay of splitting time}, we have that
 $$\mathbb{E}\big[W_{n+1}^2\big]=\mathbb{E}\big[W_n^2\big]+\frac{1}{\rho^{n}}\Big(\frac{\theta-\rho^2}{\rho^2}\Big).$$
This recursive equation can be solved and its solution is given by the closed-term expression
 $$\mathbb{E}\big[W_n^2\big]=1+\Big(\frac{\theta-\rho^2}{\rho^2}\Big)\sum_{k=0}^{n-1}\frac{1}{\rho^{k}},$$
thus $\sup_{n\in\N} \E\big[W_n^2\big] < \infty$,
which together with \cite[Theorem 11.10]{Klenke}  yields the result.
\end{proof}
Recall that by $\varphi$ we have denoted the characteristic function of a standard Gaussian random variable.
\begin{lemma}\label{lem: CLT helping lemma 2}
Let $(R_n)_{n\in\N}$  be a sequence random variables with characteristic functions $\varphi_{R_n}$, such that for all $t\in\R$
\begin{align*}
\lim_{n\to\infty}\varphi_{R_n}(t) = \varphi(t),
\end{align*}
 and let $(t_n)_{n\in\N}$ be a sequence of real numbers with limit $t\in \R$. Then it holds
\begin{align*}
 \lim_{n\to\infty}\varphi_{R_n}(t_n) = \varphi(t).
 \end{align*}
\end{lemma}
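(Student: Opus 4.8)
The plan is to derive the claim from the pointwise convergence hypothesis together with a uniform-in-$n$ equicontinuity estimate for the family $\{\varphi_{R_n}\}_{n\in\N}$. First I would note that since $\varphi_{R_n}(t)\to\varphi(t)$ for every $t\in\R$ and $\varphi(t)=\e^{-t^2/2}$ is continuous at $0$ with $\varphi(0)=1$, the classical Lévy continuity theorem applies and shows that $(R_n)_{n\in\N}$ converges in distribution (to a standard Gaussian); in particular the family $(R_n)_{n\in\N}$ is tight, i.e.~for every $\varepsilon>0$ there is $K>0$ with $\sup_{n\in\N}\Prob(|R_n|>K)<\varepsilon$.

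Next I would use tightness to bound the modulus of continuity of the characteristic functions uniformly in $n$. Using the elementary inequality $|\e^{\I u}-1|\le\min\{|u|,2\}$ for $u\in\R$, we get for all $s,t\in\R$
$$|\varphi_{R_n}(s)-\varphi_{R_n}(t)| = \Big|\E\big[\e^{\I t R_n}\big(\e^{\I(s-t)R_n}-1\big)\big]\Big| \le \E\big[\min\{|s-t|\,|R_n|,2\}\big] \le |s-t|\,K + 2\,\Prob(|R_n|>K).$$
Hence, given $\varepsilon>0$, choosing $K$ as above and then $n_0\in\N$ so large that $|t_n-t|\,K<\varepsilon$ for all $n\ge n_0$ (possible since $t_n\to t$), we obtain $|\varphi_{R_n}(t_n)-\varphi_{R_n}(t)|\le 3\varepsilon$ for every $n\ge n_0$.

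Finally I would conclude with the triangle inequality
$$|\varphi_{R_n}(t_n)-\varphi(t)| \le |\varphi_{R_n}(t_n)-\varphi_{R_n}(t)| + |\varphi_{R_n}(t)-\varphi(t)|,$$
where the first summand is at most $3\varepsilon$ for $n\ge n_0$ by the previous step and the second summand tends to $0$ by the hypothesis $\varphi_{R_n}(t)\to\varphi(t)$. Since $\varepsilon>0$ was arbitrary, $\varphi_{R_n}(t_n)\to\varphi(t)$. The only mildly delicate point — and the step I would flag as the crux — is obtaining the uniform-in-$n$ equicontinuity of $\{\varphi_{R_n}\}$: without any uniform moment bound on the $R_n$ a crude Lipschitz estimate is unavailable, so one genuinely needs the tightness coming from the easy direction of Lévy's continuity theorem. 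Everything else is a routine $3\varepsilon$-argument.
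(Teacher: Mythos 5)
Your argument is correct and takes essentially the same route as the paper: a triangle-inequality split of $|\varphi_{R_n}(t_n)-\varphi(t)|$, the elementary bound $|\e^{\I u}-1|\le\min\{|u|,2\}$ applied to $|\varphi_{R_n}(t_n)-\varphi_{R_n}(t)|$, and an appeal to L\'evy's continuity theorem to get control that is uniform in $n$. The only cosmetic difference is in the last step: you extract tightness and split off $K|t_n-t|+2\,\Prob(|R_n|>K)$, while the paper applies weak convergence directly to the bounded continuous function $x\mapsto\min\{\varepsilon|x|,2\}$ and bounds the resulting Gaussian expectation by $\varepsilon$; both give the same $\varepsilon$-argument.
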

\begin{proof}
During the proof we write $\varphi_n$ for $\varphi_{R_n}$. Using the triangle inequality, we obtain
    \begin{align*}
        | \varphi_n(t_n) - \varphi(t)|\leq 
        | \varphi_n(t_n) - \varphi_n(t)|+
        | \varphi_n(t) - \varphi(t)|.
    \end{align*}
The second term converges to $0$ by assumption. For the first term let $R$ be a standard Gaussian random variable. By setting $\varepsilon_n = t_n -t$ and using the bound $\vert e^{it}-1\vert\leq \min\{\vert t \vert ,2\}$ we obtain
    \begin{align*}
 | \varphi_n(t_n) - \varphi_n(t)| &= \big|\E[\e^{it_nR_n}-\e^{itR_n}]\big|\leq \E[|\e^{it_nR_n}-\e^{itR_n}|] \\
 &= \E[|\e^{i\varepsilon_nR_n}-1|]\leq \E[\min\lbrace |\varepsilon_nR_n|,2\rbrace].
    \end{align*}
    Now let $\varepsilon>0$. Since $t_n\rightarrow t$, we have for $n$ large enough that
    $$\mathbb{E}[\min\{\vert\varepsilon_n R_n\vert ,2\}]\leq \mathbb{E}[\min\{\vert\varepsilon R_n\vert ,2\}].$$
    Since the function
    \begin{align*}
        \mathbb{R}\to [0,\infty):t\mapsto \min \lbrace |\varepsilon t|,2 \rbrace 
    \end{align*}
    is bounded and continuous, pointwise convergence of the characteristic functions implies weak convergence, from which we obtain
    \begin{align*}
         \lim_{n\to\infty}\E[\min\lbrace |\varepsilon R_n|,2\rbrace] = \E[\min\lbrace |\varepsilon R|,2\rbrace] \leq \E[|\varepsilon R|] = \frac{2}{\sqrt{2\pi}}\varepsilon\leq \varepsilon.
    \end{align*}
    We thus see that
    $$\lim_{n\to\infty}\E[\min\lbrace |\varepsilon_n R_n|,2\rbrace]\leq \varepsilon.$$
Since the limit holds for an arbitrary $\varepsilon$, the proof is completed.
\end{proof}

\begin{lemma}\label{lem: CLT helping lemma 4}
Let $\mu$, $(\mu_n)_{n\in \N}$ be measures on $\R$ with corresponding characteristic functions $\varphi$ and $(\varphi_n)_{n\in \N}$ respectively, such that for every $y\in\R$ it holds
\begin{align*}
\lim_{n\to\infty}\int_0^y\varphi_{\mu_n}(t)\,dt = \int_0^y\varphi_\mu(t)\,dt.
    \end{align*}
Then the family of measures $(\mu_n)_{n\in\N}$ is tight.
\end{lemma}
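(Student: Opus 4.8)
The plan is to adapt the tightness part of the classical proof of Lévy's continuity theorem; the only twist is that the hypothesis gives convergence of the \emph{integrated} characteristic functions $\int_0^y\varphi_{\mu_n}(t)\,dt$ rather than pointwise convergence of the $\varphi_{\mu_n}$, but this is precisely the form in which the classical estimate is applied. The starting point is the elementary inequality: for any finite measure $\nu$ on $\R$ with characteristic function $\varphi_\nu$ and any $u>0$,
$$\nu\big(\{|x|\ge 2/u\}\big)\;\le\; 2\,\varphi_\nu(0)-\frac{2}{u}\,\mathrm{Re}\!\int_0^u \varphi_\nu(t)\,dt.$$
This follows from the averaging identity $\frac{1}{2u}\int_{-u}^u\big(\varphi_\nu(0)-\varphi_\nu(t)\big)\,dt=\int_\R\big(1-\tfrac{\sin(ux)}{ux}\big)\,d\nu(x)$, together with the facts that the integrand on the right is nonnegative and is at least $\tfrac12$ on $\{|ux|\ge 2\}$, and that $\int_{-u}^u\varphi_\nu(t)\,dt=2\,\mathrm{Re}\int_0^u\varphi_\nu(t)\,dt$ since $\varphi_\nu(-t)=\overline{\varphi_\nu(t)}$ for a positive measure.

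Applying this with $\nu=\mu_n$, set $g_n(u):=2\,\varphi_{\mu_n}(0)-\tfrac{2}{u}\,\mathrm{Re}\int_0^u\varphi_{\mu_n}(t)\,dt$, so that $\mu_n(\{|x|\ge 2/u\})\le g_n(u)$ for all $n$ and $u>0$. By the hypothesis, $\int_0^u\varphi_{\mu_n}(t)\,dt\to\int_0^u\varphi_\mu(t)\,dt$ for each fixed $u$; combining this with the convergence of total masses $\varphi_{\mu_n}(0)=\mu_n(\R)\to\mu(\R)=\varphi_\mu(0)$ — which in the setting where this lemma is invoked holds because $M_n^*(\R)=W_n\to W$ — we obtain $g_n(u)\to g(u):=2\,\varphi_\mu(0)-\tfrac{2}{u}\,\mathrm{Re}\int_0^u\varphi_\mu(t)\,dt$ as $n\to\infty$, for each fixed $u>0$. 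Since $\varphi_\mu$ is the characteristic function of a finite measure it is continuous at $0$, so $\tfrac1u\int_0^u\varphi_\mu(t)\,dt\to\varphi_\mu(0)$ as $u\downarrow 0$, whence $g(u)\to 0$ as $u\downarrow 0$.

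The conclusion then follows by a routine $\varepsilon$-argument: given $\varepsilon>0$, first fix $u>0$ with $g(u)<\varepsilon/2$, then $N$ with $|g_n(u)-g(u)|<\varepsilon/2$ for all $n\ge N$, so $\mu_n(\{|x|\ge 2/u\})<\varepsilon$ for $n\ge N$; the finitely many measures $\mu_1,\dots,\mu_{N-1}$ are each individually tight, so enlarging the cutoff radius handles them as well, producing a single $K$ with $\sup_n\mu_n(\{|x|>K\})<\varepsilon$, i.e.\ tightness.

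I expect the only genuine subtlety to be the one just flagged: the integrated convergence $\int_0^y\varphi_{\mu_n}\to\int_0^y\varphi_\mu$ by itself does \emph{not} control $\varphi_{\mu_n}(0)=\mu_n(\R)$ (mass can escape to infinity while leaving every $\int_0^y\varphi_{\mu_n}$ convergent), so one really needs the convergence — or at least boundedness together with non-escape — of the total masses in order to make the tail bound $g_n(u)$ small uniformly in $n$; in the present application this is exactly what the population-martingale convergence $W_n\to W$ provides.
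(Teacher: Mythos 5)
Your proof is correct and follows essentially the same route as the paper: the truncation inequality built from the kernel $1-\sin(ux)/(ux)$, obtained by averaging the characteristic function near the origin, is exactly the estimate used there (the paper fixes the averaging interval $[0,1]$ and rescales the argument by $1/K$ instead of varying $u$, and works with the constant $\alpha=1-\sin(1)$ rather than your $1/2$). Your closing caveat is well taken and in fact sharper than the paper's write-up: the integrated convergence alone does not control $\varphi_{\mu_n}(0)=\mu_n(\R)$ (for instance $\mu_n=\delta_0+\delta_n$ satisfies the hypothesis with $\mu=\delta_0$ yet is not tight), so the lemma implicitly needs convergence of the total masses; the paper's proof silently assumes unit mass when it replaces $\int_\R(1-\cos(tx/K))\,d\mu_n(x)$ by $1-\mathrm{Re}\,\varphi_{\mu_n}(t/K)$, and in the application this missing ingredient is supplied, exactly as you note, by $M_n^*(\R)=W_n\to W$.
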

\begin{proof}
We follow the proof of \cite[Theorem 15.24, Page 345]{Klenke}. We define the function 
$h:\R\to[0,\infty)$ by
\begin{align*}
 h(x) = 1-\frac{\sin(x)}{x},\quad \text{for }x\neq 0,
\end{align*}
and $h(0)=0$, which is clearly continuously differentiable on $\R$. We set $\alpha = \inf\lbrace h(x):|x|\geq 1\rbrace = 1-\sin(1) > 0$. For $K>0$, in view of Markov's inequality together with Fubini's theorem we obtain
    \begin{align*}
        \mu_n([-K,K]^c)&\leq \frac{1}{\alpha}\int_{[-K,K]^c} h(x/K)\,d\mu_n(x)\\
        &\leq \frac{1}{\alpha}\int_\R h(x/K)\,d\mu_n(x)\\
        & = \frac{1}{\alpha}\int_\R\Big(\int_0^1 (1-\cos(tx/K))\,dt\Big) \,d\mu_n(x) \\
        & = \frac{1}{\alpha}\int_0^1 \Big(\int_\R (1-\cos(tx/K))\,d\mu_n(x)\Big) \,dt  \\
        & =\frac{1}{\alpha}\int_0^1 \big(1-\text{Re}(\varphi_{\mu_n}(t/K))\big)\,dt \\
        &=\frac{1}{\alpha}\Big(1-\text{Re}\big(\int_0^1\varphi_{\mu_n}(t/K)\big)\Big) .
    \end{align*}
Now we can use the assumption and the dominated convergence to obtain that 
    \begin{align*}
        \limsup_{n\to\infty} \mu_n([-K,K]^c) &\leq \limsup_{n\to\infty}\frac{1}{\alpha}\int_0^1 \big(1-\text{Re}(\varphi_{\mu_n}(t/K))\big)\,dt \\
        &=\frac{1}{\alpha}\Big(1-\text{Re}\big(\lim_{n\to\infty}\int_0^1\varphi_{\mu_n}(t/K)\big)\Big) \\
        &= \frac{1}{\alpha}\Big(1-\text{Re}\big(\int_0^1\varphi_{\mu}(t/K)\big)\Big) \\
        &= \frac{1}{\alpha}\int_0^1 \big(1-\text{Re}(\varphi_\mu(t/K))\big)\,dt.
    \end{align*}
Since $\varphi_\mu$ is a characteristic function, it is continuous. Therefore if $K\rightarrow\infty$ the last integral and thus the expression on the left-hand side in the previous equation converges to $0$ and this proves the tightness of the sequence $(\mu_n)_{n\in \N}$.
\end{proof}

\begin{lemma}\label{lem : separating}
The family $\mathcal{S}$ of functions
    \begin{align*}
        \mathcal{S} = \left\lbrace x \mapsto \int_0^y e^{itx} \,dt\Big|y\in\R\right\rbrace \subset \mathcal{C}_b(\R)
    \end{align*}
    is separating.
\end{lemma}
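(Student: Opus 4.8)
The plan is to reduce the separating property to the uniqueness theorem for characteristic functions. First I would verify the membership $\mathcal{S}\subset\mathcal{C}_b(\R)$ asserted in the statement: for $y\in\R$ the function $f_y(x)=\int_0^y\e^{itx}\,dt$ satisfies $|f_y(x)|\le|y|$ for every $x$, and it is continuous in $x$ by dominated convergence, so each $f_y$ is bounded and continuous.

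Next, for an arbitrary finite measure $\mu$ on $\R$, I would apply Fubini's theorem — legitimate since the integrand $(t,x)\mapsto\e^{itx}$ is bounded and the product measure on $[0,y]\times\R$ is finite — to obtain
$$\int f_y\,d\mu=\int\Big(\int_0^y\e^{itx}\,dt\Big)\,d\mu(x)=\int_0^y\Big(\int\e^{itx}\,d\mu(x)\Big)\,dt=\int_0^y\varphi_\mu(t)\,dt.$$
Thus the functional $\mu\mapsto\int f_y\,d\mu$ records exactly the integral of the characteristic function $\varphi_\mu$ over the interval $[0,y]$.

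Now suppose $\mu\ne\nu$ are finite measures with $\int f\,d\mu=\int f\,d\nu$ for every $f\in\mathcal{S}$. By the identity above this means $\int_0^y\varphi_\mu(t)\,dt=\int_0^y\varphi_\nu(t)\,dt$ for all $y\in\R$. Since $\varphi_\mu$ and $\varphi_\nu$ are continuous, both sides are continuously differentiable functions of $y$, and differentiating in $y$ gives $\varphi_\mu\equiv\varphi_\nu$; by the uniqueness theorem for characteristic functions of finite measures (see \cite{Klenke}) this forces $\mu=\nu$, a contradiction. Hence whenever $\mu\ne\nu$ there is some $y\in\R$ with $\int f_y\,d\mu\ne\int f_y\,d\nu$, which is precisely the statement that $\mathcal{S}$ is separating.

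I do not expect any real obstacle. The only points deserving a word of justification are the use of Fubini — covered by boundedness of $\e^{itx}$ together with finiteness of $\mu$ — and the passage from equality of the primitives $y\mapsto\int_0^y\varphi_\mu$ back to equality of the $\varphi_\mu$ themselves, which is immediate from continuity; alternatively one can bypass differentiation by noting that a continuous function $g$ with $\int_0^y g=0$ for all $y$ vanishes identically, applied here to $g=\varphi_\mu-\varphi_\nu$.
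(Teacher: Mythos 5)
Your argument is correct and is essentially the paper's own proof: swap the order of integration to identify $\int f_y\,d\mu$ with $\int_0^y\varphi_\mu(t)\,dt$, use continuity of the characteristic functions to differentiate and conclude $\varphi_\mu\equiv\varphi_\nu$, and invoke the uniqueness theorem. The extra remarks justifying Fubini and membership in $\mathcal{C}_b(\R)$ are fine but change nothing substantive.
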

\begin{proof}
Let $\mu\neq\nu$ be two measures on $\R$. Assume that for every $y\in\R$, we have
\begin{align*}
\int\int_0^y\e^{itx} \, dt\,d\mu(x) =  \int\int_0^y\e^{itx} \, dt\,d\nu(x).
\end{align*}
Then it holds that
\begin{align*}
\int_0^y \varphi_\mu(t)\,dt &= \int_0^y \int \e^{itx} \,d\mu(x) \,dt =   \int\int_0^y\e^{itx} \, dt\,d\mu(x)  \\
        & = \int\int_0^y\e^{itx} \, dt\,d\nu(x) =  \int_0^y \int \e^{itx} \,d\nu(x) \,dt=    \int_0^y \varphi_\nu(t)\,dt
\end{align*}
for every $y\in\R$, which implies that 
    \begin{align*}
        \varphi_\mu(y) = \frac{d}{dy}\int_0^y \varphi_\mu(t)\,dt = \frac{d}{dy}\int_0^y \varphi_\nu(t)\,dt =   \varphi_\nu(y) 
    \end{align*}
    for every $y\in\R$. But since a measure is uniquely determined by its characteristic function, this means that $\mu = \nu$  which is a contradiction.
\end{proof}

\section*{Concluding remarks and questions}

\textbf{Maximal displacement.}
In Section \ref{subsec: Maximal Displacement} we have proven that the maximal displacement 
$$\maxdis_n = \max_{v\in T_n}|X_v|,$$
grows at most linearly, but we did not prove that the growth rate is linear, i.e.~there exists an $\alpha>0$, such that
$$\lim_{n\to\infty}\frac{\maxdis_n}{n} = \alpha,\quad \text{almost surely}.$$
For supercritical branching random walks on $\Z$ and $\R$, this was shown in a series of works \cite{FirstBirthProbBigg,FirstBirthProbKing,FirstBirthProbHamm}. In \cite{HuandShi}, the authors improved the result by establishing the existence of a logarithmic correction term, i.e.~they have shown that if $\alpha$ is the linear speed of the maximal displacement it holds
$$\frac{\maxdis_n-\alpha n}{\log(n)} \CIP \frac{3}{2}, \quad \text{as } n\to\infty.$$
We conjecture that the maximal displacement for a supercritical branching random walk on a transitive graph exhibits the same behaviour. For branching random walks  on supercritical Galton-Watson trees, the question of the linear speed of the maximal displacement has been addressed recently  in \cite{maximal-displacement-tree}.

\textbf{Critical branching random walk.}
In the current paper, we have proven limit theorems for the empirical distribution of a supercritical branching random walk. It is interesting to consider the case where the branching process is critical, i.e.~the expected number of offspring is equal to $1$. In the critical case, the population dies out almost surely, and we can consider the critical process conditioned on survival as introduced by Kesten in \cite{KestenTree}.
A Stam-type central limit theorem for conditional critical branching random walks on $\R$ has been proven in \cite{CriticalStam}. See Kesten \cite{CriticalDisplacement1} and Lalley-Shao \cite{CriticalDisplacement2} for limit results on the maximal displacement of critical branching random walks.

In our context, it would be interesting to investigate the empirical distribution of conditional critical branching random walks, and to prove a law of large numbers for the cumulative rate of escape and a Stam-type central limit theorem on transitive graphs. Central limit theorems for critical branching random walks conditioned on survival have been studied on $\R$ in \cite{CriticalStam}.  In the critical branching random walks conditioned to survive on transitive graphs, we expect to obtain completely different limit results than in the supercritical case.

\textbf{Funding information.} The research of R. Kaiser and E. Sava-Huss was funded in part by the Austrian Science Fund (FWF) 10.55776/PPAT3123425. For open access purposes, the authors have applied a CC BY public copyright license to any author-accepted manuscript version arising from this submission.

\bibliography{Literature}
\bibliographystyle{alpha}
\end{document}